\let\footnote=\endnote
\newcommand\redsout{\bgroup\markoverwith{\textcolor{red}{\rule[0.5ex]{2pt}{0.4pt}}}\ULon}
\newcommand\gsout{\bgroup\markoverwith{\textcolor{green}{\rule[0.5ex]{2pt}{0.4pt}}}\ULon}
\newcommand{\PreserveBackslash}[1]{\let\temp=\\#1\let\\=\temp}
\newcolumntype{C}[1]{>{\PreserveBackslash\centering}p{#1}}
\newcolumntype{R}[1]{>{\PreserveBackslash\raggedleft}p{#1}}
\newcolumntype{L}[1]{>{\PreserveBackslash\raggedright}p{#1}}
\newcommand{\ba}{{\bf a}}
\newcommand{\bb}{{\bf b}}
\newcommand{\bc}{{\bf c}}
\newcommand{\bd}{{\bf d}}
\newcommand{\bg}{{\bf g}}
\newcommand{\bh}{{\bf h}}
\newcommand{\bs}{{\bf s}}
\newcommand{\bv}{{\bf v}}
\newcommand{\bx}{{\bf x}}
\newcommand{\by}{{\bf y}}
\newcommand{\bba}{{\bf A}}
\newcommand{\bbb}{{\bf B}}
\newcommand{\bbc}{{\bf C}}
\newcommand{\bbx}{{\bf X}}
\newcommand{\bby}{{\bf Y}}
\newcommand{\bbz}{{\bf Z}}
\newcommand{\bpi}{{\boldsymbol{\pi}}}
\newcommand{\bgamma}{{\boldsymbol{\gamma}}}
\newcommand{\blambda}{{\boldsymbol{\lambda}}}
\newcommand{\bLambda}{{\boldsymbol{\Lambda}}}
\newcommand{\bzeta}{{\boldsymbol{\zeta}}}
\newcommand{\bxi}{{\boldsymbol{\xi}}}
\newcommand{\F}{\mathcal{F}}    
\newcommand{\Prb}{\mathbb{P}}  
\newcommand{\Exp}{\mathbb{E}}
\newcommand{\I}{\mathds{1}} 
\renewcommand{\P}{\mathbb{P}}
\newcommand{\E}{\mathbb{E}}
\let\N\Natural
\let\Q\Rational
\let\R\Real
\newcommand{\bzero}{{\boldsymbol{0}}}
\def\ie{{\it i.e.}, }
\def\eg{{\it e.g.}, }
\begin{document}


 \RUNAUTHOR{Bertsimas, McCord, and Sturt}


\RUNTITLE{Dynamic Optimization with Side Information}

\TITLE{Dynamic optimization with side information}

\ARTICLEAUTHORS{%
\AUTHOR{Dimitris Bertsimas, Christopher McCord, Bradley Sturt}
\AFF{Operations Research Center, Massachusetts Institute of Technology, \\\EMAIL{dbertsim@mit.edu, mccord@mit.edu, bsturt@mit.edu}}
} 

\ABSTRACT{%
We develop a tractable and flexible approach for incorporating side information into dynamic optimization under uncertainty. 
The proposed framework uses predictive machine learning methods (such as $k$-nearest neighbors, kernel regression, and random forests) to weight the relative importance of various data-driven uncertainty sets in a robust optimization formulation. Through a novel measure concentration result for a class of machine learning methods, we prove that the proposed approach is asymptotically optimal for multi-period stochastic programming with side information. We also describe a general-purpose approximation for these optimization problems, based on overlapping linear decision rules, which is computationally tractable and produces high-quality solutions for dynamic problems with many stages.
Across a variety of examples in inventory management, finance, and shipment planning, our method achieves improvements of up to 15\% over alternatives and requires less than one minute of computation time on problems with twelve stages. 
}%

\KEYWORDS{Distributionally robust optimization; machine learning; dynamic optimization.}

\HISTORY{This paper was first submitted in May 2019. A revision was submitted in May 2020. }

\maketitle

\section{Introduction} \label{sec:intro}
Dynamic decision making under uncertainty forms the foundation for numerous fundamental problems in operations research and  management science. In these problems, a decision maker attempts to minimize an uncertain objective over time, as information incrementally becomes available. For example, consider a retailer with the goal of managing the inventory of a new short life cycle product. Each week, the retailer must decide an ordering quantity to replenish its inventory. Future demand for the product is unknown, but the retailer can base its ordering decisions on the remaining inventory level, which depends on the realized demands in previous weeks. A risk-averse investor faces a similar problem when constructing and adjusting a portfolio of assets in order to achieve a desirable risk-return tradeoff over a horizon of many months. Additional examples abound in energy planning, airline routing, and ride sharing, as well as in many other areas.

To make high quality decisions in dynamic environments, the decision maker must accurately model future uncertainty. Often, practitioners have access to \emph{side information} or \emph{auxiliary covariates}, which can help predict that uncertainty. For a retailer, although the future demand for a newly introduced clothing item is unknown, data on the brand, style, and color of the item, as well as data on market trends and social media, can help predict it. For a risk-averse investor, while the returns of the assets in future stages are uncertain, recent asset returns and prices of relevant options can provide crucial insight into upcoming volatility. Consequently, organizations across many industries are continuing to prioritize the use of predictive analytics in order to leverage vast quantities of data to understand future uncertainty and make better operational decisions.

{\color{black}
In this paper, we address these applications by studying the following class of {multi-period} stochastic decision problems. Specifically, we consider problems faced by organizations in which decisions $\bx_1 \in \mathcal{X}_1 \subseteq \R^{d^1_x},\ldots,\bx_T \in \mathcal{X}_T \subseteq \R^{d^T_x}$ are chosen sequentially, as random vectors  $\bxi_1 \in \Xi_1\subseteq \R^{d^1_\xi},\ldots,\bxi_T \in \Xi_T\subseteq \R^{d^T_\xi}$ become incrementally available at each temporal period. Before selecting any decisions, we observe side information, $\bgamma \in \Gamma \subseteq \R^{d_\gamma}$, which may be predictive of the uncertain quantities observed in the subsequent periods. The goal is to choose a {decision rule} (policy) which minimizes the conditional expected cost over the entire problem horizon:
\begin{align} \label{eq:main}
\begin{aligned}
v^*(\bar{\bgamma}) \triangleq \quad& \underset{\bx_t: \Xi_1 \times \cdots \times \Xi_{t-1} \to \mathcal{X}_t}{\textnormal{minimize}}  && \Exp \left[ c\left(\bxi_1,\ldots,\bxi_T,\bx_1,\bx_2(\bxi_1),\ldots,\bx_T(\bxi_1,\ldots,\bxi_{T-1})\right)  \bigg|\; \bgamma = \bar{\bgamma} \right].
\end{aligned}
\end{align}
However, the only insight into the joint probability distribution $(\bgamma,\bxi_1,\ldots,\bxi_T)$ comes from historical data, $(\bgamma{}^1,\bxi{}_1^1,\ldots,\bxi{}_T^1),\ldots,(\bgamma{}^N,\bxi{}_1^N,\ldots,\bxi{}_T^N)$, which are assumed to be independent and identically distributed (i.i.d.) realizations of the underlying joint distribution. Throughout the paper, we do not impose any parametric structure on the correlations across $(\bgamma,\bxi_1,\ldots,\bxi_T)$, and presume that the structure of optimal decision rules to \eqref{eq:main} is unknown. 
The aim of the present paper is to develop general-purpose approaches to harness this data to approximately solve the stochastic problem \eqref{eq:main}. 

Such dynamic optimization problems with an initial observation of side information arise in many operational contexts. For example, fashion retailers have access to data on the brand, style, and color of a new clothing item prior to any sales, which are predictive of demand for the product in each week of its lifecycle. 
Similarly, in finance, important economic data (such as the consumer price index CPI and key numbers from the US Bureau  of Labor Statistics report) are released monthly on a fixed schedule, and this data serves as side information for a fund manager who seeks to balance the risk of a portfolio in each day of the ensuing month. 
Consequently, from a modeling perspective, \eqref{eq:main} encompasses the variety of decision problems faced by organizations in which side information does not change over time (\eg the fashion retailer) or varies on a much longer time scale than the length of the decision horizon (\eg the fund manager).

A recent body of work has aimed to leverage predictive analytics to address \eqref{eq:main} in the particular case of single-period problems ($T=1$).  
For example, \cite{hannah2010nonparametric,ban2018big,bertsimas2014predictive,hansusanto2019kernel} investigate prescriptive approaches, based on sample average approximation, that use local machine learning to assign weights to the historical data based on side information.  %
 \cite{bertsimas2017bootstrap} propose adding robustness to those weights to achieve optimal asymptotic budget guarantees.   \cite{elmachtoub2017smart} develop an approach for linear optimization problems in which a machine learning model is trained to minimize the decision cost.  
{\color{black}
Unfortunately, prescriptive approaches designed for single-period problems do not generally extend to \eqref{eq:main}, as illustrated by the following example.
\begin{example}
Suppose a decision maker attempted to approximate \eqref{eq:main} by solving 
\begin{align}\label{eq:bad}
\begin{aligned}
& \underset{\bx_t: \Xi_1 \times \cdots \times \Xi_{t-1} \to \mathcal{X}_t}{\textnormal{minimize}}  &&  \sum_{i=1}^N w^i_N(\bar{\bgamma})  c\left(\bxi_1^i,\ldots,\bxi_T^i,\bx_1,\bx_2(\bxi_1^i),\ldots,\bx_T(\bxi_1^i,\ldots,\bxi_{T-1}^i)\right),
\end{aligned}
\end{align}
where $w^i_N(\cdot)$ are weight functions (satisfying $\sum_{i=1}^N w^i_N(\bar{\bgamma}) = 1$) derived from machine learning methods applied to historical data. When $T=1$ and the weight functions are constructed using a suitable class of machine learning methods, \cite{bertsimas2014predictive} show under certain conditions that the above optimization problem is asymptotically optimal, and will thus provide a near-optimal approximation of \eqref{eq:main} in big data settings. 
However, it is readily observed that approaches such as \eqref{eq:bad} will result in a poor approximation of the underlying multi-period stochastic decision problem with side information when $T \ge 2$, as the optimal decision rules produced by \eqref{eq:bad} will generally be ``anticipative" with respect to the historical data.\footnote{If the random vectors are continuous and $T \ge 2$, it is readily observed that \eqref{eq:bad} resolves to an optimization problem of the form
\begin{align*}
\begin{aligned}
& \underset{\bx_1 \in \mathcal{X}_1;\; \bx_2^i \in \mathcal{X}_2,\ldots,\bx_T^i \in \mathcal{X}_T \forall i}{\textnormal{minimize}}  &&  \sum_{i=1}^N w^i_N(\bar{\bgamma})  c\left(\bxi_1^i,\ldots,\bxi_T^i,\bx_1,\bx_2^i,\ldots,\bx_T^i\right).
\end{aligned}
\end{align*}}  Such anticipativity (a form of \emph{overfitting}) is ultimately of practical importance, as it implies that the \eqref{eq:bad} can provide an unsuitable approximation of \eqref{eq:main} even in the presence of big data. 
\end{example}

To circumvent overfitting in the context of multi-period problems with side information, recent literature have aimed to address \eqref{eq:main} by  constructing  scenario trees. Scenario trees have been long studied in the stochastic programming literature, and essentially address overfitting by encoding the various ways that uncertainty can unfold across time. For a class of multi-period inventory management problems with side information, \cite{ban2018dynamic} propose fitting historical data and side information to a parametric regression model, and establish  asymptotic optimality when the model is correctly specified. \citet{bertsimasmccord2018multistage} propose a different approach based on dynamic programming that uses nonparametric machine learning methods to handle auxiliary side information. These papers also extend to problems where side information is observed at multiple periods.  However, these dynamic approaches require scenario tree enumeration and suffer from the curse of dimensionality. As a result, and despite their asymptotic optimality, the existing approaches for addressing \eqref{eq:main} can require hours or days to obtain high-quality solutions for problems with ten or fewer time periods.  

\subsection{Contributions}
The aim of the present paper, in a nutshell, is to develop a machine learning-based approach for addressing \eqref{eq:main} which remains computationally tractable for operational problems with many periods. 
To this end, we develop a new approach to \eqref{eq:main} by a natural combination of prescriptive analytics \eqref{eq:bad} with recent techniques from robust optimization to avoid overfitting  \citep{bertsimas2018multistage},  and the present paper unifies our understanding of these disparate models through  a novel asymptotic theory. 

Our proposed combination of two streams of literature (prescriptive analytics and robust optimization) is ultimately viewed as attractive from a practical standpoint. Across multi-period and single-period problems from several applications (shipment planning, inventory management, and finance), the proposed approach produces solutions with up to 15\% improvement in average out-of-sample cost compared to alternatives. In particular, the approach does not require a scenario tree, and as a result, is significantly more tractable compared to  existing approaches for dynamic optimization with side information. 
To the best of our knowledge, this is the first approach to address \eqref{eq:main} which offers asymptotic optimality guarantees while remaining practically tractable for problems with many periods, thus offering organizations a general-purpose tool for better decision making with predictive analytics.



In greater detail, the key results of this paper are the following: 


\vspace{0.5em}

\begin{enumerate}[(a)]
\item We propose addressing \eqref{eq:main} by combining the prescriptive analytics approach \eqref{eq:bad} with a  technique of \cite{bertsimas2018multistage} to avoid overfitting in multi-period problems.  
\vspace{0.5em}
\item We prove  under mild conditions that this combination of {machine learning} and {robust optimization} is asymptotically optimal for \eqref{eq:main} for general spaces of decision rules (Theorem~\ref{thm:convergence}).

\vspace{0.5em}
\item To establish the above guarantee, we show for the first time that an \emph{empirical conditional probability distribution} that is constructed from machine learning methods will, as more data is obtained, converge to the underlying \emph{conditional probability distribution} with respect to the type-1 Wasserstein distance (Theorem~\ref{thm:conditionalconcentration}). 

\vspace{0.5em}
\item  As a byproduct of the new measure concentration result, we show how side information and machine learning can be tractably incorporated into (single-period) Wasserstein-based distributionally robust optimization problems while maintaining its attractive asymptotic optimality. 

\vspace{0.5em}
\item  To find high quality solutions for problems with many stages in practical computation times, we develop a tractable approximation algorithm for these robust optimization problems  by extending an approach of \cite{bertsimas2019twostage,chen2020robust} to multi-period problems. 

\vspace{0.5em}
\item Across multi-period and single-period problems from several applications (shipment planning, inventory management, and finance), we show that the proposed combination of machine learning and robust optimization outperforms alternatives with up to 15\% improvement in average out-of-sample cost. In particular, the proposed approach is practical and scalable, requiring less than one minute on examples with up to twelve stages.
\end{enumerate}

\vspace{0.5em}

The paper is organized as follows. Section~\ref{sec:prob_setting} introduces the problem setting and notation. Section~\ref{sec:sro} proposes the new framework for incorporating machine learning into dynamic optimization. Section~\ref{sec:theory} develops theoretical guarantees on the proposed approach. Section~\ref{sec:wdro} discusses implications of these results in the context of single-period distributionally robust optimization with the type-1 Wasserstein ambiguity set. Section \ref{sec:approx} presents the general multi-policy approximation scheme for dynamic optimization with side information. Section~\ref{sec:experiments} presents a detailed investigation and computational simulations of the proposed methodology in shipment planning, inventory management, and finance. We conclude in Section~\ref{sec:conclusion}.

\subsection{Comparison to related work}
This paper follows a recent body of literature on data-driven optimization under uncertainty in operations research and management science. Much of this work has focused on the paradigm of distributionally robust optimization, in which the optimal solution is that which performs best in expectation over a worst-case probability distribution from an ambiguity set. Motivated by probabilistic guarantees, distributionally robust optimization has found particular applicability in data-driven settings in which the ambiguity set is constructed using historical data, such as \cite{delage2010distributionally,xu2012distributional,esfahani2015data,van2017data}. 
In particular, the final steps in our convergence result (Section~\ref{sec:main_result}) draw heavily from similar techniques from \cite{esfahani2015data} and \cite{bertsimas2018multistage}. 
 In contrast to previous work, this paper develops a new measure concentration result for the empirical conditional probability distribution (Section~\ref{sec:concentration}) which enables machine learning and side information to be incorporated into sample robust optimization and Wasserstein-based distributionally robust optimization for the first time.

To the best of our knowledge, the proposed combination of machine learning and robust optimization for addressing  \eqref{eq:main} is novel and its theoretical justification does not follow from the existing literature.  With respect to prescriptive analytics, 
 \cite{bertsimas2014predictive} establish asymptotic optimality guarantees for problems of the form \eqref{eq:bad} in the case of $T = 1$. Their result requires that the cost function is {equicontinuous}. Their proof relies on results from the machine learning literature \citep{walk2010strong}, which show that an appropriately constructed \emph{empirical conditional probability distribution} (with weights $\{w_N^i(\bar{\bgamma})\}$ assigned to each historical observation $\bxi^i$) weakly converges to the \emph{true conditional probability distribution} of $\bxi$ given $\bgamma=\bar{\bgamma}$, under certain assumptions. However, the asymptotic optimality and proof techniques do not apply to \eqref{eq:bad} when $T \ge 2$, since the cost function resulting from decision rules is not equicontinuous in general.  For problems without side information, \cite{bertsimas2018multistage} circumvent the requirement of equicontinuity by adding robustness to the historical data. To establish asymptotic optimality, they use the fact that the empirical probability distribution of the uncertainties concentrates around the true distribution with respect to the type-1 Wasserstein distance. In the present paper, we unify these proof techniques by developing a new measure concentration result for machine learning which shows that the  empirical conditional probability distribution produced by appropriate weight functions concentrates around the true conditional probability distribution with respect to the type-1 Wasserstein distance. This establishes the asymptotic optimality of our robustification of \eqref{eq:bad} for multi-stage stochastic decision problems with side information.  

Several recent papers have focused on tractable approximations of two- and multi-stage \emph{distributionally} and \emph{sample} robust optimization. Many approaches are based around policy approximation schemes, including lifted linear decision rules \citep{bertsimas2018adaptive}, $K$-adaptivity \citep{hanasusanto2016k}, and finite adaptability \citep{bertsimas2018multistage}. Alternative approaches include tractable approximations of copositive formulations \citep{natarajan2011mixed,hanasusanto2016conic}. Closest related to the approximation scheme in this paper are \cite{chen2020robust} and \cite{bertsimas2019twostage}, which address two-stage problems via overlapping decision rules. \cite{chen2020robust} propose a \emph{scenario-wise} modeling approach that leads to novel approximations of  various distributionally robust applications, including two-stage distributionally robust optimization using Wasserstein ambiguity sets and expectations of piecewise convex objective functions in single-stage problems.  Independently, \cite{bertsimas2019twostage} investigate a \emph{multi-policy approximation} of two-stage sample robust optimization by optimizing a separate linear decision rule for each uncertainty set and prove that this approximation gap converges to zero as the amount of data goes to infinity.   In Section~\ref{sec:approx} of this paper, we show how to extend similar techniques to dynamic problems with many stages for the first time.


\section{Problem Setting} \label{sec:prob_setting}
As described in the introduction, we consider finite-horizon discrete-time stochastic decision problems. The uncertain quantities observed in each stage are denoted by random variables $\bxi_1 \in \Xi_1\subseteq \R^{d^1_\xi},\ldots,\bxi_T \in \Xi_T\subseteq \R^{d^T_\xi}$, and the decisions made in each stage are denoted by $\bx_1 \in \mathcal{X}_1 \subseteq \R^{d^1_x},\ldots,\bx_T \in \mathcal{X}_T \subseteq \R^{d^T_x}$. Given realizations of the uncertain quantities and decisions, we incur a cost of
\begin{align*}
c\left(\bxi_1,\ldots,\bxi_T,\bx_1,\ldots,\bx_T\right)\in \R.
\end{align*}
Let a decision rule $\bpi = (\bpi_1,\ldots,\bpi_T)$ denote a collection of measurable functions $\bpi_t: \Xi_1 \times \cdots \times \Xi_{t-1} \to \mathcal{X}_t$ which specify what decision to make in stage $t$ based of the information observed up to that point. {\color{black}For notational convenience, let $\Pi$ denote the space of all measurable non-anticipative decision rules.} Given realizations of the uncertain quantities and choice of decision rules, the resulting cost is
\begin{align*}
c^\bpi\left(\bxi_1,\ldots,\bxi_T\right) \triangleq c(\bxi_1,\ldots,\bxi_T,\bpi_1,\bpi_2(\bxi_1),\ldots,\bpi_T(\bxi_1,\ldots,\bxi_{T-1})).
\end{align*}
Before selecting the decision rules, we observe auxiliary side information $\bgamma \in \Gamma \subseteq \R^{d_\gamma}$. For example, in the aforementioned fashion setting, the side information may contain information on the brand, style, and color of a new clothing item and the remaining uncertainties representing the demand for the product in each week of the lifecycle.  

Given a realization of the side information $\bgamma = \bar{\bgamma}$, our goal is to find {decision rules} which minimize the conditional expected cost:\begin{align} \tag{\ref{eq:main}}
\begin{aligned}
v^*(\bar{\bgamma}) \triangleq \quad& \underset{\bpi \in \Pi}{\textnormal{minimize}}  && \Exp \left[ c^\bpi(\bxi_1,\ldots,\bxi_T)\; \bigg|\; \bgamma = \bar{\bgamma} \right].
\end{aligned}
\end{align} 
We refer to \eqref{eq:main} as \emph{dynamic optimization with side information}. The optimization takes place over {a collection $\Pi$ which is any subset of the space of all non-anticipative decision rules.}
In this paper, we assume that the joint distribution of the side information and uncertain quantities $(\bgamma,\bxi_1,\ldots,\bxi_T)$ is unknown, and our knowledge consists of historical data of the form
\begin{equation*}
(\bgamma{}^1,\bxi{}_1^1,\ldots,\bxi{}_T^1),\ldots,(\bgamma{}^N,\bxi{}_1^N,\ldots,\bxi{}_T^N),
\end{equation*}
where each of these tuples consists of a realization of the side information and the following realization of the random variables over the stages. For example, in the aforementioned fashion setting, each tuple corresponds to the side information of a past fashion item as well as its demand over its lifecycle. We will not assume any parametric structure on the relationship between the side information and future uncertainty.

The goal of this paper is a general-purpose, computationally tractable, data-driven approach for approximately solving dynamic optimization with side information.  
In the following sections, we propose and analyze a new framework which leverages nonparametric machine learning, trained from historical data, to predict future uncertainty from side information in a way that leads to near-optimal decision rules to \eqref{eq:main}.

\subsection{Notation}

The joint probability distribution of the side information $\bgamma$ and uncertain quantities $\bxi=(\bxi_1,\ldots,\bxi_T)$ is denoted by $\Prb$. 
For the purpose of proving theorems, we assume throughout this paper that the historical data 
are independent and identically distributed (i.i.d.) samples from this distribution $\Prb$. In other words, we assume that the historical data satisfies
\begin{align*}
((\bgamma{}^1,\bxi{}^1),\ldots,(\bgamma{}^N,\bxi{}^N)) \sim \Prb^N,
\end{align*}
where $\Prb^N \triangleq \Prb \times \cdots \times \Prb$ is the product measure. The set of all probability distributions supported on $\Xi \triangleq \Xi_1\times\cdots\times\Xi_T \subseteq \R^{d_\xi}$ is denoted by $\mathcal{P}(\Xi)$. For each of the side information $\bar{\bgamma} \in \Gamma$, we assume that its conditional probability distribution satisfies $\Prb_{\bar{\bgamma}} \in \mathcal{P}(\Xi)$, where  $\P_{\bar{\bgamma}}(\cdot)$ is shorthand for $\P(\cdot \mid \bgamma = \bar{\bgamma})$. We use ``i.o." as shorthand for ``infinitely often". We sometimes use subscript notation for expectations to specify the underlying probability distribution; for example, the following two expressions are equivalent:
\begin{align*}
\Exp_{\bxi \sim \Prb_{\bar{\bgamma}}} \left[ f(\bxi_1,\ldots,\bxi_T) \right] \equiv \Exp \left[ f(\bxi_1,\ldots,\bxi_T) \mid \bgamma = \bar{\bgamma} \right].
\end{align*}
Finally, we say that the cost function resulting from a policy $\bpi$ is upper semicontinuous if 
\begin{align*}
\limsup_{\bzeta \to \bar{\bzeta}} c^\bpi(\bzeta_1,\ldots,\bzeta_T) \le c^\bpi(\bar{\bzeta}_1,\ldots,\bar{\bzeta}_T)
\end{align*}
for all $\bar{\bzeta} \in \Xi$. 

\section{Sample Robust Optimization with Side Information}\label{sec:sro}
In this section, we present our approach for incorporating machine learning in dynamic optimization. We first review sample robust optimization, and then we introduce the proposed \emph{sample robust optimization with side information} approach to \eqref{eq:main}.

\subsection{Preliminary: sample robust optimization} \label{sec:sro_review}
Consider a stochastic dynamic optimization problem of the form \eqref{eq:main} in which there is no side information. The underlying joint distribution of the random variables $\bxi \equiv (\bxi_1,\ldots,\bxi_T)$ is unknown, but we have data consisting of sample paths, $\bxi^1\equiv(\bxi{}^1_1,\ldots,\bxi{}^1_T),\ldots,\bxi^N\equiv(\bxi{}^N_1,\ldots,\bxi{}^N_T)$. For this setting, 
 sample robust optimization can be used to find approximate solutions in stochastic dynamic optimization. To apply the framework, one constructs an uncertainty set around each sample path in the training data and then chooses the decision rules that optimize the average of the worst-case realizations of the cost. Formally, this framework results in the following robust optimization problem:
\begin{equation} \label{eq:unweightedsro}
\begin{aligned}
&\underset{\bpi \in \Pi}{\textnormal{minimize}}&&  \sum_{i=1}^N  \frac{1}{N} \sup_{\bzeta\in \mathcal{U}^i_N}  c^\bpi(\bzeta_1,\ldots,\bzeta_T),
\end{aligned}
\end{equation}
where $\mathcal{U}_N^i\subseteq\Xi$ is an uncertainty set around $\bxi^i$. Intuitively speaking, \eqref{eq:unweightedsro} chooses the decision rules by averaging over the historical sample paths which are adversarially perturbed. 
Under mild probabilistic assumptions on the underlying joint distribution and appropriately constructed uncertainty sets, \cite{bertsimas2018multistage} show that sample robust optimization converges asymptotically to the underlying stochastic problem and that \eqref{eq:unweightedsro} is amenable to approximations similar to dynamic robust optimization.

\subsection{Incorporating side information into sample robust optimization} \label{sec:sro_with_covariates}
We now present our new framework, based on sample robust optimization, for solving dynamic optimization with side information. In the proposed framework, we first train a machine learning algorithm on the historical data to predict future uncertainty $(\bxi_1,\ldots,\bxi_T)$ as a function of the side information. 
From the trained learner, we obtain weight functions $w_{N}^i(\bar{\bgamma})$, for $i=1,\ldots,N$, each of which captures the relevance of the $i$th training sample to the new side information, $\bar{\bgamma}$. We incorporate the weights into sample robust optimization by multiplying the cost associated with each training example by the corresponding weight function. The resulting \emph{sample robust optimization with side information} framework is as follows:
\begin{equation}\label{eq:sro}
\begin{aligned}
\hat{v}^N(\bar{\bgamma}) \triangleq\quad& \underset{\bpi \in \Pi}{\textnormal{minimize}} && \sum_{i=1}^N w^i_N(\bar{\bgamma}) \sup_{\bzeta \in \mathcal{U}^i_N} c^\bpi(\bzeta_1,\ldots,\bzeta_T),
\end{aligned}
\end{equation}
where the uncertainty sets are defined
\begin{equation*}
\mathcal{U}^i_N \triangleq \left\{\bzeta\in\Xi : \| \bzeta - \bxi{}^i \| \le \epsilon_N\right\},
\end{equation*}
and $\|\cdot\|$ is some $\ell_p$ norm with $p\ge 1$.

The above framework provides the flexibility for the practitioner to construct weights from a variety of machine learning algorithms. We focus in this paper on weight functions which come from nonparametric machine learning methods. Examples of viable predictive models include $k$-nearest neighbors (kNN), kernel regression, classification and regression trees (CART), and random forests (RF).  We describe these four classes of weight functions. 

\begin{definition}
The $k$-nearest neighbor weight functions are given by:
\begin{equation*}
w^i_{N,\text{$k$NN}}(\bar{\bgamma}) \triangleq \begin{cases}
\dfrac{1}{k_N}, & \text{if $\bgamma^i$ is a $k_N$-nearest neighbor of $\bar{\bgamma}$,} \\
0, & \text{otherwise.}
\end{cases}
\end{equation*}
\end{definition}
Formally, $\bgamma^i$ is a $k_N$-nearest neighbor of $\bar{\bgamma}$ if $\left|\left\{j \in\{1,\ldots,N\} \setminus i : \|\bgamma^j - \bar{\bgamma}\| <  \|\bgamma^i - \bar{\bgamma}\|\right\}\right| < k_N$. For more technical details, we refer the reader to \citet{biau2015lectures}.

\begin{definition}
The kernel regression weight functions are given by:
\begin{equation*}
w^i_{N,\text{KR}}(\bar{\bgamma}) \triangleq \frac{K(\|\bgamma^i-\bar{\bgamma}\|/h_N)}{\sum_{j=1}^N K(\|\bgamma^j-\bar{\bgamma}\|/h_N)},
\end{equation*}
where $K(\cdot)$ is the kernel function and $h_N$ is the bandwidth parameter. Examples of kernel functions include the Gaussian kernel, $K(u) = \frac{1}{\sqrt{2\pi}}e^{-u^2/2}$, the triangular kernel, $K(u) = (1-u)\I\{u\le 1\}$, and the Epanechnikov kernel, $K(u) = \frac{3}{4}(1-u^2)\I\{u \le 1\}$. For more information on kernel regression, see \citet[Chapter 6]{friedman2001elements}.
\end{definition}
 The next two types of weight functions we present are based on classification and regression trees \citep{breiman1993cart} and random forests \citep{breiman2001random}. We refer the reader to \citet{bertsimas2014predictive} for technical implementation details.

\begin{definition}
The classification and regression tree weight functions are given by:
\begin{equation*}
w^i_{N,\text{CART}}(\bar{\bgamma}) \triangleq \begin{cases}
\dfrac{1}{|l^N(\bar{\bgamma})|}, & i \in l^N(\bar{\bgamma}), \\
0, & \text{otherwise},
\end{cases}
\end{equation*}
where $l^N(\bar{\bgamma})$ is the set of indices $i$ such that $\bgamma^i$ is contained in the same leaf of the tree as $\bar{\bgamma}$.
\end{definition}

\begin{definition}
The random forest weight functions are given by:
\begin{equation*}
w^i_{N,\text{RF}}(\bar{\bgamma}) \triangleq \frac{1}{B}\sum_{b=1}^B w^{i,b}_{N,\text{CART}}(\bar{\bgamma}),
\end{equation*}
where $B$ is the number of trees in the ensemble, and $w^{i,b}_{N,\text{CART}}(\bar{\bgamma})$ refers to the weight function of the $b$th tree in the ensemble.
\end{definition}
All of the above weight functions come from nonparametric machine learning methods. They are highly effective as predictive methods because they can learn complex relationships between the side information and the response variable without requiring the practitioner to state an explicit parametric form. Similarly, as we prove in Section~\ref{sec:theory}, solutions to \eqref{eq:sro} with these weight functions are asymptotically optimal for \eqref{eq:main} without any parametric restrictions on the relationship between $\bgamma$ and $\bxi$. In other words, incorporating side information into sample robust optimization via \eqref{eq:sro} leads to better decisions asymptotically, even without specific knowledge of how the side information affects the uncertainty.

\section{Asymptotic Optimality} \label{sec:theory}
In this section, we establish asymptotic optimality guarantees for sample robust optimization with side information. We prove that, under mild conditions, (\ref{eq:sro}) converges to (\ref{eq:main}) as the number of training samples goes to infinity. Thus, as the amount of data grows, sample robust optimization with side information becomes an optimal approximation of the underlying stochastic dynamic optimization problem. Crucially, our convergence guarantee does not require parametric restrictions on the space of decision rules (\eg linearity)  or parametric restrictions on the joint distribution of the side information and uncertain quantities. 

\subsection{Main result}
We begin by presenting our main result. The proof of the result depends on some technical assumptions and concepts from distributionally robust optimization.  For simplicity, we defer the statement and discussion of technical assumptions regarding the underlying probability distribution and cost until Sections~\ref{sec:concentration} and \ref{sec:main_result}, and first discuss what is needed to apply the method in practice. The practitioner needs to select a weight function, parameters associated with that weight function, and the radius, $\epsilon_N$, of the uncertainty sets. While these may be selected by cross validation, we show that the method will in general converge if the parameters are selected to satisfy the following: 
\begin{assumption} \label{ass:params}
The weight functions and uncertainty set radius satisfy one of the following:
\begin{enumerate}
\item $\{w_N^i(\cdot)\}$ are $k$-nearest neighbor weight functions with $k_N = \min(\lceil k_3 N^\delta\rceil,N-1)$ for constants $k_3>0$ and $\delta\in(\frac{1}{2},1)$, and $\epsilon_N = \dfrac{k_1}{N^p}$ for constants $k_1 > 0$ and $0 < p <  \min\left(\frac{1-\delta}{d_\gamma},\frac{2\delta-1}{d_\xi+2}\right)$.
\item $\{w_N^i(\cdot)\}$ are kernel regression weight functions with the Gaussian, triangular, or Epanechnikov kernel function and $h_N = k_4 N^{-\delta}$ for constants $k_4>0$ and $\delta \in \left(0,\frac{1}{2d_\gamma}\right)$, and $\epsilon_N = \dfrac{k_1}{N^p}$ for constants $k_1 > 0$ and $0 < p < \min\left(\delta,\frac{1-\delta d_\gamma}{2+d_\xi}\right)$.
\end{enumerate}
\end{assumption}
Given Assumption~\ref{ass:params}, our main result is the following.

\begin{theorem}\label{thm:convergence}
Suppose the weight function and uncertainty sets satisfy Assumption~\ref{ass:params}, the joint probability distribution of $(\bgamma,\bxi)$ satisfies Assumptions \ref{as:subgaussian}-\ref{as:aux} from Section~\ref{sec:concentration}, and the cost function satisfies Assumptions \ref{as:decisionclass_sro}-\ref{as:decisionclass_sp} from Section~\ref{sec:main_result}.
Then, for every $\bar{\bgamma}\in\Gamma$, $$\lim_{N \to \infty} \hat{v}^N(\bar{\bgamma}) = v^*(\bar{\bgamma}), \quad \Prb^\infty\text{-almost surely}.$$ 
\end{theorem}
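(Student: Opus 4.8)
The plan is to recast the objective of \eqref{eq:sro} as a distributionally robust expectation taken over a type-1 Wasserstein ball centered at the \emph{weighted empirical conditional distribution} $\hat{\Prb}_N^{\bar{\bgamma}} \triangleq \sum_{i=1}^N w_N^i(\bar{\bgamma})\,\delta_{\bxi^i}$, and then to establish the two one-sided bounds $\limsup_N \hat{v}^N(\bar{\bgamma}) \le v^*(\bar{\bgamma})$ and $\liminf_N \hat{v}^N(\bar{\bgamma}) \ge v^*(\bar{\bgamma})$ on a single $\Prb^\infty$-probability-one event. The workhorse throughout is Theorem~\ref{thm:conditionalconcentration}: under Assumption~\ref{ass:params} and Assumptions~\ref{as:subgaussian}--\ref{as:aux}, the weighted empirical conditional distribution concentrates on the true conditional law $\Prb_{\bar{\bgamma}}$ in the type-1 Wasserstein distance, and (as the admissible range of the exponent $p$ in Assumption~\ref{ass:params} suggests) in fact $W_1(\hat{\Prb}_N^{\bar{\bgamma}},\Prb_{\bar{\bgamma}}) \le \epsilon_N$ for all $N$ large enough, almost surely. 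First I would record the elementary but pivotal fact that, for any fixed $\bpi \in \Pi$ with upper semicontinuous cost, the maximizing atoms $\bzeta^{i,\star}$ of the inner suprema move each sample by at most $\epsilon_N$ and hence define a measure within Wasserstein distance $\epsilon_N$ of $\hat{\Prb}_N^{\bar{\bgamma}}$, so that
\begin{equation*}
\Exp_{\hat{\Prb}_N^{\bar{\bgamma}}}\!\left[c^\bpi\right] \;\le\; \sum_{i=1}^N w_N^i(\bar{\bgamma}) \sup_{\bzeta \in \mathcal{U}^i_N} c^\bpi(\bzeta) \;\le\; \sup_{\,\Prb' :\, W_1(\Prb',\hat{\Prb}_N^{\bar{\bgamma}}) \le \epsilon_N} \Exp_{\Prb'}\!\left[c^\bpi\right].
\end{equation*}
The right inequality drives the upper bound; the left (robust dominates nominal) drives the lower bound.

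For the upper bound I would fix $\eta > 0$, choose a policy $\bpi_\eta$ that is $\eta$-optimal for \eqref{eq:main}, and chain the right inequality above with the triangle inequality for Wasserstein balls:
\begin{equation*}
\hat{v}^N(\bar{\bgamma}) \;\le\; \sum_{i=1}^N w_N^i(\bar{\bgamma}) \sup_{\bzeta \in \mathcal{U}^i_N} c^{\bpi_\eta}(\bzeta) \;\le\; \sup_{\,\Prb':\, W_1(\Prb',\Prb_{\bar{\bgamma}}) \le \epsilon_N + W_1(\hat{\Prb}_N^{\bar{\bgamma}},\Prb_{\bar{\bgamma}})} \Exp_{\Prb'}\!\left[c^{\bpi_\eta}\right].
\end{equation*}
Since the radius tends to zero by Theorem~\ref{thm:conditionalconcentration}, the growth and upper-semicontinuity conditions (Assumptions~\ref{as:decisionclass_sro}--\ref{as:decisionclass_sp}) force the worst-case expectation over the shrinking ball around the \emph{fixed} measure $\Prb_{\bar{\bgamma}}$ to converge down to $\Exp_{\Prb_{\bar{\bgamma}}}[c^{\bpi_\eta}] \le v^*(\bar{\bgamma}) + \eta$; this continuity-at-radius-zero step is precisely the device used by \cite{esfahani2015data} and \cite{bertsimas2018multistage}. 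Letting $\eta \downarrow 0$ gives $\limsup_N \hat{v}^N(\bar{\bgamma}) \le v^*(\bar{\bgamma})$.

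The lower bound is the crux and the main obstacle. The left inequality above gives $\hat{v}^N(\bar{\bgamma}) \ge \Exp_{\hat{\Prb}_N^{\bar{\bgamma}}}[c^{\bpi^N}]$ for the \emph{data-dependent} optimizer $\bpi^N$ of \eqref{eq:sro}; for any \emph{fixed} policy, Wasserstein convergence of $\hat{\Prb}_N^{\bar{\bgamma}}$ to $\Prb_{\bar{\bgamma}}$ together with the growth conditions already pushes the nominal expectation to $\Exp_{\Prb_{\bar{\bgamma}}}[c^\bpi] \ge v^*(\bar{\bgamma})$, so the entire difficulty lies in passing the limit through the minimization over the \emph{non-compact, infinite-dimensional} policy class $\Pi$ while $\bpi^N$ varies with the data. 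The essential tension is that the sets $\mathcal{U}^i_N$ constrain each sample path to move by at most $\epsilon_N$, whereas Wasserstein proximity controls only the \emph{average} displacement, so one cannot naively certify $\Exp_{\Prb_{\bar{\bgamma}}}[c^{\bpi^N}] \le \hat{v}^N(\bar{\bgamma})$. I would resolve this in the manner of \cite{bertsimas2018multistage}: use the almost-sure inclusion $W_1(\hat{\Prb}_N^{\bar{\bgamma}},\Prb_{\bar{\bgamma}}) \le \epsilon_N$ to construct, for any target policy, a transport plan whose realized displacements match the per-sample radii up to an asymptotically negligible remainder, whose contribution is killed by the sub-Gaussian tail (Assumption~\ref{as:subgaussian}) against the growth rate of the cost, and then invoke upper semicontinuity of $c^\bpi$ to convert the resulting pointwise estimates into the desired inequality for the sequence $\bpi^N$ through a limit-point / epi-convergence argument. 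Combining the two bounds on the common probability-one event yields $\lim_N \hat{v}^N(\bar{\bgamma}) = v^*(\bar{\bgamma})$.
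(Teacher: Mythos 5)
Your high-level architecture is the paper's own: both one-sided bounds are driven by Theorem~\ref{thm:conditionalconcentration}, and your upper bound is essentially the paper's proof. There, one fixes a $\delta$-optimal policy satisfying Assumption~\ref{as:decisionclass_sp}, passes from the per-sample suprema to a type-1 Wasserstein ball (the paper goes through the type-$\infty$ ball and $\mathsf{d}_1\le\mathsf{d}_\infty$), and realizes your ``continuity at radius zero'' step via the Lipschitz approximation from above of \citet[Lemma A.1]{esfahani2015data}, the dual form of $\mathsf{d}_1$, Theorem~\ref{thm:conditionalconcentration}, and monotone convergence; citing that device rather than rederiving it is fine.

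The lower bound is where the paper does its real work, and your sketch has genuine gaps there. First, your claim that for a \emph{fixed} policy the nominal empirical expectation already converges to $\E_{\bxi\sim\Prb_{\bar{\bgamma}}}[c^{\bpi}]$ is false under the stated assumptions: $c^{\bpi}$ is only upper semicontinuous, so Wasserstein convergence of $\hat{\Prb}^N_{\bar{\bgamma}}$ yields only $\limsup_N \E_{\hat{\Prb}^N_{\bar{\bgamma}}}[c^{\bpi}]\le\E_{\Prb_{\bar{\bgamma}}}[c^{\bpi}]$ --- the wrong direction for a lower bound. So the route $\hat{v}^N(\bar{\bgamma})\ge\E_{\hat{\Prb}^N_{\bar{\bgamma}}}[c^{\bpi^N}]$ is a dead end even before data-dependence of $\bpi^N$ enters, and the paper never passes through the nominal expectation. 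Second, and decisively, the remainder in your transport-plan step is \emph{not} asymptotically negligible at the radius you assume. The paper's mechanism (Lemma~\ref{lemma:l1_to_was}) compares the worst case over a $\mathsf{d}_1$-ball of radius $\theta_1$ against per-sample suprema of radius $\theta_2$; Markov's inequality bounds the transported mass displaced by more than $\theta_2$ by $\theta_1/\theta_2$, leaving a remainder $(4\theta_1/\theta_2)\sup|c^{\bpi}|$. Your inclusion $\mathsf{d}_1(\Prb_{\bar{\bgamma}},\hat{\Prb}^N_{\bar{\bgamma}})\le\epsilon_N$ forces $\theta_1=\theta_2=\epsilon_N$, so the remainder is $4\sup|c^{\bpi}|$ and does not vanish. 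The paper instead uses exactly the slack you flag parenthetically but never exploit: it picks $r>0$ so that $\epsilon_N N^{-r}$ still satisfies Assumption~\ref{ass:params}, invokes Theorem~\ref{thm:conditionalconcentration} at the \emph{faster} radius $\epsilon_N/N^r$, and obtains a remainder of order $M N^{-r}(1+\log N)\to 0$ after truncating to $D_N=\{\bzeta:\|\bzeta\|\le\log N\}$ --- the truncation being necessary for $\sup|c^{\bpi}|$ to be finite at all, with sub-Gaussianity (Assumption~\ref{as:subgaussian}) plus Borel--Cantelli justifying it and Cauchy--Schwarz handling the excluded tail. Third, the data-dependent optimizer is not handled by any epi-convergence or limit-point argument over $\Pi$, for which no compactness or continuity structure is available; it is handled by uniformity: every error term in the paper's chain depends only on the growth constant $M$ of Assumption~\ref{as:decisionclass_sro} and on $\log N$, never on the particular policy, so the inequalities hold uniformly over $\bpi\in\Pi^N$ and the infimum over policies passes through directly.
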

The theorem says that objective value of (\ref{eq:sro}) 
{\color{black}will }converge almost surely to the optimal value of the full-information problem, (\ref{eq:main}), as $N$ goes to infinity. The assumptions of the theorem require that the joint distribution and the feasible decision rules are well behaved. We will discuss these technical assumptions in more detail in the following sections.

In order to prove the asymptotic optimality of sample robust optimization with side information, we view (\ref{eq:sro}) through the more general lens of Wasserstein-based distributionally robust optimization. We first review some properties of the Wasserstein metric and then prove a key intermediary result, from which our main result follows.

\subsection{Review of the Wasserstein metric}
The Wasserstein metric provides a distance function between probability distributions. In particular, given two probability distributions $\mathbb{Q},\mathbb{Q}' \in \mathcal{P}(\Xi)$, the type-1 Wasserstein distance is defined as the optimal objective value of a minimization problem:
\vspace{0.5em}
\begin{align*}
\mathsf{d}_1 \left(\mathbb{Q},\mathbb{Q}' \right) 
&\triangleq \inf \left \{ \E_{(\bxi,\bxi')\sim\Pi} \left \| \bxi - \bxi' \right \| : \quad \begin{aligned} &\Pi \text{ is a joint distribution of $\bxi$ and $\bxi'$} \\
&\text{with marginals } \mathbb{Q} \text{ and } \mathbb{Q}' \text{, respectively}
\end{aligned} \right \}.\end{align*}
The Wasserstein metric is particularly appealing because a distribution with finite support can have a finite distance to a continuous distribution. This allows us to construct a Wasserstein ball around an empirical distribution that includes continuous distributions, which cannot be done with other popular measures such as the Kullback-Leilbler divergence \citep{kullback1951information}. We remark that the {\color{black}type-}1 Wasserstein metric satisfies the axioms of a metric, including the triangle inequality  \citep{clement2008elementary}: $$\mathsf{d}_1(\Q_1,\Q_2) \le \mathsf{d}_1(\Q_1,\Q_3) + \mathsf{d}_1(\Q_3,\Q_2), \quad \forall \Q_1,\Q_2,\Q_3 \in \mathcal{P}(\Xi).$$ Important to this paper, the {\color{black}type-1} Wasserstein metric admits a dual form, as shown by \citet{kantorovich1958space},
\begin{equation*}
\mathsf{d}_1(\Q,\Q') = \sup_{\text{Lip}(h) \le 1}  \left | \E_{\bxi\sim\Q}[h(\bxi)] - \E_{\bxi\sim\Q'}[h(\bxi)] \right|,
\end{equation*}
where the supremum is taken over all 1-Lipschitz functions. Note that the absolute value is optional in the dual form of the metric, and the space of Lipschitz functions can be restricted to those which satisfy $h(0) = 0$ without loss of generality. 
Finally, we remark that \citet{fournier2015rate} prove under a light-tailed assumption that the 1-Wasserstein distance between the empirical distribution and its underlying distribution concentrates around zero with high probability. Theorem \ref{thm:conditionalconcentration} in the following section extends this concentration result to the setting with side information.

\subsection{Concentration of the empirical conditional probability distribution} \label{sec:concentration}
Given a local predictive method, let the corresponding empirical conditional measure be defined as
\begin{equation*}
\hat{\P}^N_{\bar{\bgamma}} := \sum_{i=1}^N w^i_N(\bar{\bgamma}) \delta_{\bxi{}^i},
\end{equation*}
where $\delta_\bxi$ denotes the Dirac probability distribution which places point mass at $\bxi$. 
In this section, we prove under mild assumptions that the empirical conditional  measure $\hat{\P}^N_{\bar{\bgamma}}$ concentrates quickly to $\P_{\bar{\bgamma}}$ with respect to the $1$-Wasserstein metric. 
We introduce the following assumptions on the underlying joint probability distribution:

\begin{assumption}[Conditional Subgaussianity]\label{as:subgaussian}
There exists a parameter $\sigma > 0$ such that
\begin{equation*}
\P\left(\|\bxi\| - \E[\|\bxi\| \mid \bgamma=\bar{\bgamma}] > t \mid \bgamma=\bar{\bgamma}\right) \le \exp\left(-\frac{t^2}{2\sigma^2}\right) \;\;\; \forall t > 0, \bar{\bgamma}\in\Gamma.
\end{equation*}
\end{assumption}
\begin{assumption}[Lipschitz Continuity]\label{as:wasserstein}
There exists $0 < L < \infty$ such that
\begin{equation*}
\mathsf{d}_1(\P_{\bar{\bgamma}},\P_{\bar{\bgamma}'}) \le L \|\bar{\bgamma}-\bar{\bgamma}'\|, \quad \forall \bar{\bgamma},\bar{\bgamma}'\in\Gamma.
\end{equation*}
\end{assumption}
\begin{assumption}[Smoothness of Side Information]\label{as:aux}
The set $\Gamma$ is compact, and there exists $g > 0$ such that
\begin{align*}
\P(\| \bgamma - \bar{\bgamma} \| \le \epsilon) \ge g\epsilon^{d_\gamma}, \quad \forall \epsilon > 0, \; \bar{\bgamma} \in \Gamma.
\end{align*}
\end{assumption}

Let us reflect on the  conditions on the underlying joint distribution. Assumption \ref{as:subgaussian} requires that the distribution of the uncertainty is not heavy-tailed, conditional on the side information. This is satisfied, for example, if $\bxi$ has bounded support or follows a Gaussian distribution, conditional on $\bar{\bgamma}$. Assumption \ref{as:wasserstein} requires that the conditional distribution of $\bxi$ is a smooth function of $\bgamma$. This ensures we can actually learn about the conditional distribution $\Prb_{\bar{\bgamma}}$ from historical data with side information that are similar (but not identical) to $\bar{\bgamma}$. Assumption \ref{as:aux} ensures the side information are distributed in such a way that every possible $\bar{\bgamma}\in\Gamma$ has nearby observations in the historical data, as $N\to\infty$.

With these assumptions, we are ready to prove the concentration result, which is proved using a novel technique that relies on the dual form of the Wasserstein metric and a discrete approximation of the space of 1-Lipschitz functions.
\begin{theorem}\label{thm:conditionalconcentration}
Suppose the weight function and uncertainty sets satisfy Assumption~\ref{ass:params} and the joint probability distribution of $(\bgamma,\bxi)$ satisfies Assumptions \ref{as:subgaussian}-\ref{as:aux}. Then, for every $\bar{\bgamma}\in\Gamma$,
\begin{equation*}
\P^\infty\left(\left \{\mathsf{d}_1(\P_{\bar{\bgamma}},\hat{\P}_{\bar{\bgamma}}^N) > \epsilon_N \right\} \textnormal{ i.o. } \right) = 0.
\end{equation*}
\end{theorem}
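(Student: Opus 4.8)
The plan is to bound the Wasserstein distance through its Kantorovich dual, to split the resulting quantity into a bias contribution and a fluctuation (variance) contribution, and to show that each is of order $o(\epsilon_N)$ with probability summable in $N$, so that Borel--Cantelli delivers the ``infinitely often'' statement. By the dual representation reviewed above,
\begin{equation*}
\mathsf{d}_1(\P_{\bar{\bgamma}},\hat{\P}^N_{\bar{\bgamma}}) = \sup_{\textnormal{Lip}(h) \le 1,\, h(\bzero)=0} \left| \E_{\bxi \sim \P_{\bar{\bgamma}}}[h(\bxi)] - \sum_{i=1}^N w^i_N(\bar{\bgamma})\, h(\bxi^i) \right|,
\end{equation*}
so the task reduces to a uniform control, over all $1$-Lipschitz $h$, of the gap between the weighted empirical average $\sum_i w^i_N(\bar{\bgamma}) h(\bxi^i)$ and its conditional target $\E_{\P_{\bar{\bgamma}}}[h]$. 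For each fixed $h$ I would insert the conditional mean $\sum_i w^i_N(\bar{\bgamma})\, \E[h(\bxi^i)\mid\bgamma^i]$ and write the gap as (bias) $+$ (fluctuation). Without the weights and the bias, the fluctuation term recovers a Fournier--Guillin-type estimate; the novelty here is precisely the presence of the data-driven bias and the randomness of the weights.

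The bias term is $\sum_i w^i_N(\bar{\bgamma})\big(\E_{\P_{\bgamma^i}}[h] - \E_{\P_{\bar{\bgamma}}}[h]\big)$, and here the dual form pays off immediately: since $h$ is $1$-Lipschitz, $|\E_{\P_{\bgamma^i}}[h] - \E_{\P_{\bar{\bgamma}}}[h]| \le \mathsf{d}_1(\P_{\bgamma^i},\P_{\bar{\bgamma}}) \le L\|\bgamma^i-\bar{\bgamma}\|$ by Assumption~\ref{as:wasserstein}. Hence, \emph{uniformly} over the Lipschitz class, the bias is at most $L \sum_i w^i_N(\bar{\bgamma}) \|\bgamma^i-\bar{\bgamma}\|$, the weighted average distance from $\bar{\bgamma}$ to the observed side information receiving positive weight. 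Because the weights in Assumption~\ref{ass:params} concentrate mass on a vanishing neighborhood of $\bar{\bgamma}$ (radius governed by $k_N/N$ for $k$NN, by $h_N$ for kernels), this average distance tends to zero; Assumption~\ref{as:aux} is what guarantees that enough data actually fall in such shrinking neighborhoods, so the weighted distance is $o(\epsilon_N)$ at the prescribed rates, with overwhelming probability. No covering argument is needed for the bias, as the Lipschitz constant alone controls it.

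The fluctuation term $\sum_i w^i_N(\bar{\bgamma})\big(h(\bxi^i) - \E[h(\bxi^i)\mid\bgamma^i]\big)$ is where the real work lies and where I expect the main obstacle. For a single $h$, conditionally on all the $\bgamma^i$ the summands are independent, mean zero, and (because $h$ is $1$-Lipschitz and the conditional law of $\bxi$ is subgaussian by Assumption~\ref{as:subgaussian}) subgaussian; the weights are deterministic given $\bgamma^{1:N}$ and satisfy $\sum_i (w^i_N(\bar{\bgamma}))^2$ of order $1/k_N$ (or its kernel analogue), so a weighted sub-Gaussian tail bound gives $\Prb(|\textnormal{fluctuation}| > t \mid \bgamma^{1:N}) \le 2\exp(-c\, k_N t^2/\sigma^2)$. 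The difficulty is promoting this pointwise-in-$h$ bound to a \emph{uniform} bound over the infinite-dimensional $1$-Lipschitz class. The plan is the discrete-approximation device hinted at in the statement: first truncate the domain to a Euclidean ball $B_{R_N}$ of slowly growing radius $R_N$, using subgaussianity to show the mass and Lipschitz contribution outside $B_{R_N}$ are negligible; then build a finite $\eta_N$-net (in sup-norm over $B_{R_N}$) of the $1$-Lipschitz functions vanishing at the origin, whose log-cardinality scales like $(R_N/\eta_N)^{d_\xi}$ by a standard metric-entropy estimate; apply the per-function tail bound to each net element and take a union bound; and finally absorb the $\eta_N$ discretization error. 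The entire argument turns on balancing three competing scales---the net resolution $\eta_N$, the truncation radius $R_N$, and the effective sample size $k_N$---so that $\exp\!\big((R_N/\eta_N)^{d_\xi}\big)\exp(-c\,k_N\epsilon_N^2)$ is summable in $N$. This is precisely the constraint that pins down the admissible ranges of $\delta$ and $p$ in Assumption~\ref{ass:params} (including the factor $d_\xi+2$ and, for the $k$NN case, the requirement $\delta > \tfrac12$ so that $2\delta-1>0$), and closing these exponents is the crux of the proof.

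With the bias shown to be $o(\epsilon_N)$ and the fluctuation shown to exceed $\epsilon_N/2$ with $\Prb^N$-probability summable in $N$, I would conclude by the Borel--Cantelli lemma that $\{\mathsf{d}_1(\P_{\bar{\bgamma}},\hat{\P}^N_{\bar{\bgamma}}) > \epsilon_N\}$ occurs only finitely often, $\Prb^\infty$-almost surely, which is the claim. A technical point to handle throughout is that the weights $w^i_N(\bar{\bgamma})$ are themselves random---they depend on $\bgamma^{1:N}$---so the fluctuation bound should be derived conditionally on $\bgamma^{1:N}$ and then integrated, while the neighborhood-radius control underlying the bias estimate should be shown to hold on an event whose complement has summably small probability.
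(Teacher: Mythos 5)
Your proposal follows essentially the same route as the paper's proof: your bias term is exactly the paper's first triangle-inequality term $\mathsf{d}_1(\P_{\bar{\bgamma}},\hat{\Q}^N_{\bar{\bgamma}})$, bounded via Assumption~\ref{as:wasserstein} by $L\sum_i w_N^i(\bar{\bgamma})\|\bgamma^i-\bar{\bgamma}\|$, and your truncation-plus-metric-entropy treatment of the fluctuation term matches the paper's handling of the remaining two terms (truncation to the ball $B_N$ with subgaussian control of the tail, an explicit piecewise-constant net $\mathcal{H}_l$ of the $1$-Lipschitz class, a union bound, Hoeffding-type concentration conditional on $\bgamma^1,\ldots,\bgamma^N$ integrated against the weights' effective-sample-size bound, and Borel--Cantelli with the same exponent balancing that fixes the ranges of $p$ and $\delta$). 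The only difference is organizational: the paper executes the truncation as a separate Wasserstein triangle-inequality term between $\hat{\Q}^N_{\bar{\bgamma}}$ and $\hat{\Q}^N_{\bar{\bgamma}\mid B_N}$ rather than folding it into the fluctuation analysis, which is the same argument written in a different order.
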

\begin{proof}{Proof.}
Without loss of generality, we assume throughout the proof that all norms $\|\cdot\|$ refer to the $\ell_\infty$ norm.\footnote{To see why this is without loss of generality, consider any other $\ell_p$ norm where $p\ge 1$. In this case, $$\|\bxi-\bxi'\|_p \le d_\xi^{1/p}\|\bxi-\bxi'\|_\infty.$$ By the definition of the 1-Wasserstein metric, this implies $$\mathsf{d}_1^p(\P_{\bar{\bgamma}},\hat{\P}^N_{\bar{\bgamma}}) \le d_\xi^{1/p}\mathsf{d}^\infty_1(\P_{\bar{\bgamma}},\hat{\P}^N_{\bar{\bgamma}}),$$ where $\mathsf{d}_1^p$ refers to the 1-Wasserstein metric with the $\ell_p$ norm. If $\epsilon_N$ satisfies Assumption~\ref{ass:params}, $\epsilon_N/d_\xi^{1/p}$ also satisfies Assumption~\ref{ass:params}, so the result for all other choices of $\ell_p$ norms follows from the result with the $\ell_\infty$ norm.}
Fix any $\bar{\bgamma} \in \Gamma$. It follows from Assumption~\ref{ass:params} that
\begin{align}
&\text{$\{w^i_N(\bar{\bgamma})\}$ are not functions of $\bxi^1,\ldots,\bxi^N$}; \label{line:honesty_assumption}\\
&\sum_{i=1}^N w^i_N(\bar{\bgamma}) = 1 \text{ and } w^1_N(\bar{\bgamma}),\ldots,w^N_N(\bar{\bgamma}) \ge 0,&\forall N \in \N;  \label{line:sum_to_one}\\
&\epsilon_N = \dfrac{k_1}{N^p}, & \forall N \in \N, \label{line:radius}
\end{align}
for constants $k_1,p > 0$. Moreover, Assumption~\ref{ass:params} also implies that there exist constants $k_2 > 0$ and $\eta > p(2+d_\xi)$ such that 
\begin{align}
&\lim_{N \to \infty} \frac{1}{\epsilon_N}\sum_{i=1}^N w_N^i(\bar{\bgamma}) \|\bgamma{}^i-\bar{\bgamma}\| = 0, &\text{  $\P^\infty$-almost surely}; \label{line:lipschitz_convergence} \\
&\E_{\P^N} \left[\exp\left(\frac{-\theta}{\sum_{i=1}^N w_N^i(\bar{\bgamma})^2}\right)\right] \le \exp(-k_2\theta N^\eta), & \forall \theta \in (0,1), N \in \N\label{line:mgf}. 
\end{align}
The proof of the above statements under Assumption~\ref{ass:params} is found in Appendix~\ref{appx:weights}. Now, choose any fixed $q \in (0, \eta/(2+d_\xi) - p)$, and let
\begin{align*}
b_N &\triangleq N^q, &B_N &\triangleq \left \{ \bzeta \in \R^{d_\xi}: \; \| \bzeta \| \le b_N \right \},& I_N \triangleq \I \left \{ \bxi^1,\ldots,\bxi^N \in B_N \right \}.
\end{align*}
Finally, we define the following intermediary probability distributions:
\begin{align*}
\hat{\Q}^N_{\bar{\bgamma}} &\triangleq \sum_{i=1}^N w_N^i(\bar{\bgamma}) \P_{\bgamma{}^{i}}, &\hat{\Q}^N_{\bar{\bgamma} \mid B_N} &\triangleq \sum_{i=1}^N w_N^i(\bar{\bgamma}) \P_{\bgamma^i \mid B_N}, 
\end{align*}
where $\P_{\bgamma^i \mid B_N}(\cdot)$ is shorthand for $\P(\cdot \mid \bgamma = \bgamma^i, \bxi \in B_N)$. 

Applying the triangle inequality for the $1$-Wasserstein metric and the union bound, 
\begin{align*}
\P^\infty\left(\{\mathsf{d}_1(\P_{\bar{\bgamma}},\hat{\P}_{\bar{\bgamma}}^N) > \epsilon_N\} \text{ i.o.}\right) &\le  \P^\infty\left(\left\{ \mathsf{d}_1(\P_{\bar{\bgamma}},\hat{\Q}_{\bar{\bgamma}}^N)  >  \frac{\epsilon_N}{3} \right\} \text{ i.o.}\right)\\
& \quad + \P^\infty\left(\left\{\mathsf{d}_1(\hat{\Q}_{\bar{\bgamma}}^N,\hat{\Q}^N_{\bar{\bgamma} \mid B_N})  > \frac{\epsilon_N}{3} \right\} \text{ i.o.}\right) \\
&\quad+ \P^\infty\left(\left\{\mathsf{d}_1(\hat{\Q}_{\bar{\bgamma} \mid B_N}^N,\hat{\Prb}^N_{\bar{\bgamma}})  >  \frac{\epsilon_N}{3} \right\} \text{ i.o.}\right). 
\end{align*}
We now proceed to bound each of the above terms. 
\subsubsection*{Term 1: $\mathsf{d}_1(\P_{\bar{\bgamma}},\hat{\Q}_{\bar{\bgamma}}^N)$:}
By the dual form of the $1$-Wasserstein metric, 
\begin{align*}
\mathsf{d}_1(\P_{\bar{\bgamma}},\hat{\Q}_{\bar{\bgamma}}^N) &= \sup_{\text{Lip}(h)\le1} \left|\E[h(\bxi)|\bgamma=\bar{\bgamma}] - \sum_{i=1}^N w^i_N(\bar{\bgamma}) \E[h(\bxi)|\bgamma = \bgamma{}^i]\right|,
\end{align*}
where the supremum is taken over all 1-Lipschitz functions. 
By \eqref{line:sum_to_one} and Jensen's inequality, we can upper bound this by
\begin{align*}
\mathsf{d}_1(\P_{\bar{\bgamma}},\hat{\Q}_{\bar{\bgamma}}^N) & \le \sum_{i=1}^N w^i_N(\bar{\bgamma})\left(\sup_{\text{Lip}(h)\le1} \left|\E[h(\bxi)|\bgamma=\bar{\bgamma}] -  \E[h(\bxi)|\bgamma = \bgamma^i]\right|\right)\\
& = \sum_{i=1}^N w_N^i(\bar{\bgamma}) \mathsf{d}_1 \left( \Prb_{\bar{\bgamma}}, \Prb_{\bgamma^i} \right)\\
&\le L\sum_{i=1}^N w_N^i(\bar{\bgamma}) \|\bar{\bgamma}-\bgamma^i\|,
\end{align*}
where the final inequality follows from Assumption \ref{as:wasserstein}. Therefore, it follows from \eqref{line:lipschitz_convergence} that
\begin{align}
 \P^\infty\left( \left\{ \mathsf{d}_1(\P_{\bar{\bgamma}},\hat{\Q}_{\bar{\bgamma}}^N)  > \frac{\epsilon_N}{3} \right\} \text{ i.o.}\right)  = 0. \label{line:first_part_of_first_bound}
\end{align}
\subsubsection*{Term 2: $\mathsf{d}_1(\hat{\Q}^N_{\bar{\bgamma}},\hat{\Q}^N_{\bar{\bgamma} \mid B_N}) $:} Consider any Lipschitz function $\text{Lip}(h) \le 1$ for which $h(0) = 0$, and let $\bar{N} \in \N$ satisfy $b_{\bar{N}} \ge \sigma + \sup_{\bar{\bgamma}\in\Gamma} \E[\|\bxi\||\bgamma=\bar{\bgamma}]$ (which is finite because of Assumption~\ref{as:aux}). Then, for all $N \ge \bar{N}$, and all $\bar{\bgamma}'\in\Gamma$,
\begin{align*}
&\E[h(\bxi)|\bgamma = \bar{\bgamma}'] - \E[h(\bxi)\mid \bgamma = \bar{\bgamma}',\bxi \in B_N] \\
&=\E[h(\bxi)\I\{\bxi \notin B_N\}\mid\bgamma = \bar{\bgamma}'] +\E[h(\bxi)\I\{\bxi \in B_N\}\mid\bgamma = \bar{\bgamma}']  - \E[h(\bxi)\mid \bgamma = \bar{\bgamma}',\bxi \in B_N]\\
&=\E[h(\bxi)\I\{\bxi \notin B_N\}\mid\bgamma = \bar{\bgamma}']  + \E[h(\bxi)\mid\bgamma = \bar{\bgamma}',\bxi \in B_N] \Prb \left(   \bxi \in B_N \mid \bgamma = \bar{\bgamma}' \right)  - \E[h(\bxi)\mid \bgamma = \bar{\bgamma}',\bxi \in B_N]\\
&=\E[h(\bxi)\I\{\bxi \notin B_N\}\mid \bgamma = \bar{\bgamma}']-\E[h(\bxi) \mid \bgamma = \bar{\bgamma}',\bxi \in B_N]\P(\bxi \notin B_N\mid\bgamma = \bar{\bgamma}') \\
&\le \E[ \| \bxi \| \I \{ \bxi \notin B_N \} \mid \bgamma = \bar{\bgamma}'] + b_N \Prb ( \bxi \notin B_N \mid \bgamma = \bar{\bgamma}') \\
&= \int_{b_N}^\infty \Prb \left( \| \bxi \| > t \mid \bgamma = \bar{\bgamma}' \right) dt + b_N \P \left(\| \bxi \| \ge b_N \mid \bgamma = \bar{\bgamma}' \right) \\
&\le (\sigma + b_N) \exp\left(-\frac{1}{2\sigma^2}\left(b_N - \sup_{\bar{\bgamma}'\in\Gamma} \E[\|\bxi\||\bgamma=\bar{\bgamma}']\right)^2\right).
\end{align*}
The first inequality follows because $|h(\bxi)| \le b_N$ for all $\bxi \in B_N$ and  $| h(\bxi) | \le \| \bxi \|$ otherwise. For the second inequality, we used the Gaussian tail inequality $\int_x^\infty e^{-t^2/2}dt \le e^{-x^2/2}$ for $x\ge 1$ \citep{vershynin2018high} along with Assumption \ref{as:subgaussian}.  Because this bound holds uniformly over all $h$, and all $\bar{\bgamma}'\in\Gamma$, it follows that
\begin{align*}
\mathsf{d}_1(\hat{\Q}_{\bar{\bgamma}}^N,\hat{\Q}_{\bar{\bgamma} \mid B_N}^N) &= \sup_{\substack{\text{Lip}(h)\le1, h(0) = 0}} \left|\sum_{i=1}^N w^i_N(\bar{\bgamma})\left(\E[h(\bxi) \mid \bgamma=\bgamma^i] -  \E[h(\bxi) \mid \bgamma = \bgamma^i, \bxi \in B_N]\right)\right| \\
&\le \sum_{i=1}^N w_N^i(\bar{\bgamma}) \sup_{\substack{\text{Lip}(h)\le1, h(0) = 0}} \left|\E[h(\bxi) \mid \bgamma=\bgamma^i] -  \E[h(\bxi) \mid \bgamma = \bgamma^i, \bxi \in B_N]\right| \\
&\le \sup_{\bar{\bgamma}'\in\Gamma}\sup_{\substack{\text{Lip}(h)\le1, h(0) = 0}} \left|\E[h(\bxi) \mid \bgamma=\bar{\bgamma}'] -  \E[h(\bxi) \mid \bgamma = \bar{\bgamma}', \bxi \in B_N]\right| \\
&\le (\sigma + b_N) \exp\left(-\frac{1}{2\sigma^2}\left(b_N - \sup_{\bar{\bgamma}'\in\Gamma} \E[\|\bxi\||\bgamma=\bar{\bgamma}']\right)^2\right),
\end{align*}
for all $N\ge \bar{N}$. It is easy to see that the right hand side above divided by $\epsilon_N/3$ goes to 0 as $N$ goes to infinity, so
\begin{equation*}
 \P^\infty\left( \left\{ \mathsf{d}_1(\hat{\Q}_{\bar{\bgamma}}^N,\hat{\Q}_{\bar{\bgamma} \mid B_N}^N )  > \frac{\epsilon_N}{3} \right\} \text{ i.o.}\right) = 0.
\end{equation*}

\subsubsection*{Term 3: $\mathsf{d}_1(\hat{\Q}^N_{\bar{\bgamma} \mid B_N},\hat{\P}^N_{\bar{\bgamma}}) $:}
By the law of total probability, 
\begin{align*}
\P^N\left(\mathsf{d}_1(\hat{\Q}^N_{\bar{\bgamma} \mid B_N},\hat{\P}^N_{\bar{\bgamma}})  > \frac{\epsilon_N}{3} \right) &\le \P^N(I_N = 0) + \P^N\left(\mathsf{d}_1(\hat{\Q}^N_{\bar{\bgamma} \mid B_N},\hat{\P}^N_{\bar{\bgamma}})  > \frac{\epsilon_N}{3} \bigg| I_N = 1 \right) .
\end{align*}
We now show that each of the above terms have finite summations. First,
\begin{align*}
&\sum_{N=1}^\infty \P^N(I_N = 0) 
 \le \sum_{N=1}^\infty N \sup_{\bar{\bgamma}'\in \Gamma} \P(\bxi \notin B_N \mid \bgamma = \bar{\bgamma}')
 \le \sum_{N=1}^\infty N\sup_{\bar{\bgamma}'\in \Gamma}  \textnormal{exp} \left(- \frac{\left(b_N - \Exp \left[\| \bxi \| \mid \bgamma = \bar{\bgamma}' \right] \right)^2}{2 \sigma^2} \right) < \infty.
\end{align*}
The first inequality follows from the union bound, the second inequality follows from Assumption~\ref{as:subgaussian}, and the final inequality follows because $\sup_{\bar{\bgamma}'\in\Gamma}\E[\|\bxi\| | \bgamma=\bar{\bgamma}'] < \infty$ and the definition of $b_N$. 

Second, for each $l \in \N$, we define several quantities. Let $\mathcal{P}_l$ be the partitioning of $B_N = [-b_N,b_N]^{d_\xi}$ into $2^{ld_\xi }$ translations of $(-b_N2^{-l},b_N2^{-l}]^{d_\xi}$.  Let $\mathcal{H}_l$ be the set of piecewise constant functions which are constant on each region of the partition $\mathcal{P}_l$, taking values on $\{kb_N2^{-l} : k \in \{0,\pm1,\pm2,\pm3,\ldots,\pm2^l\}\}$. Note that $|\mathcal{H}_l| = (2^{l+1}+1)^{2^{ld_\xi}}$.  Then, we observe that for all Lipschitz functions $\text{Lip}(h) \le 1$ which satisfy $h(0)=0$, there exists a $\hat{h} \in \mathcal{H}_l$ such that 
\begin{align*}
\sup\limits_{\bzeta\in B_N}|h(\bzeta) - \hat{h}(\bzeta)| \le b_N 2^{-l+1}.
\end{align*} 
Indeed, within each region of the partition, $h$ can vary by no more than $b_N2^{-l+1}$. The possible function values for $\hat{h}$ are separated by $b_N2^{-l}$. Because $h$ is bounded by $\pm b_N$, this implies the existence of $\hat{h} \in \mathcal{H}_l$ such that $\hat{h}$ has a value within $b_N2^{-l+1}$ of $h$ everywhere within that region. The identical reasoning holds for all other regions of the partition.

Therefore, for every $l \in \N$, 
\begin{align*}
&\P^N\left(\mathsf{d}_1(\hat{\Q}^N_{\bar{\bgamma} \mid B_N},\hat{\P}^N_{\bar{\bgamma}}) > \frac{\epsilon_N}{3} \bigg| I_N = 1\right)\nonumber \\
&= \P^N\left(\sup_{\substack{\text{Lip}(h) \le1\\h(0)=0}} \sum_{i=1}^N w_N^i(\bar{\bgamma})\left(h(\bxi^i) - \E[h(\bxi) \mid \bgamma = \bgamma^i, \bxi\in B_N]\right)  > \frac{\epsilon_N}{3} \Bigg| I_N = 1\right)\nonumber \\
&\le \P^N\left(\sup_{\hat{h} \in \mathcal{H}_l} \sum_{i=1}^N w_N^i(\bar{\bgamma})\left(\hat{h}(\bxi^i) - \E\left[\hat{h}(\bxi)\mid \bgamma = \bgamma^i, \bxi \in B_N \right]\right)  > \frac{\epsilon_N}{3} - 2\cdot b_N 2^{-l+1}\bigg| I_N = 1\right)\nonumber \\
&\le \left| \mathcal{H}_l \right| \sup_{\hat{h} \in \mathcal{H}_l}  \P^N\left(\sum_{i=1}^N w_N^i(\bar{\bgamma})\left(\hat{h}(\bxi^i) - \E\left[\hat{h}(\bxi)\mid \bgamma = \bgamma^i, \bxi \in B_N \right]\right)  > \frac{\epsilon_N}{3} - b_N 2^{-l+2} \bigg| I_N = 1\right),
\end{align*}
where the final inequality follows from the union bound. We choose $l = \left\lceil2 + \log_2 \frac{6b_N}{\epsilon_N}\right\rceil$, in which case
\begin{align*}
\frac{\epsilon_N}{3} - b_N 2^{-l+2} 
\ge \frac{\epsilon_N}{6}.
\end{align*}
Furthermore, for all sufficiently large $N$, 
\begin{align*}
|\mathcal{H}_l| = (2^{l+1}+1)^{2^{ld_\xi }} &\le \left(96\frac{b_N}{\epsilon_N}\right)^{24^{d_\xi}(b_N/\epsilon_N)^{d_\xi}} =  \exp\left(24^{d_\xi}\left(\frac{b_N}{\epsilon_N}\right)^{d_\xi}\log \frac{96b_N}{\epsilon_N}\right).
\end{align*}
Applying Hoeffding's inequality, and noting $|\hat{h}(\bxi^i)|$ is bounded by $b_N$ when $\bxi^i \in B_N$, we have the following for all $\hat{h}\in\mathcal{H}_l$:
\begin{align*}
&\P^N\left( \sum_{i=1}^N w_N^i(\bar{\bgamma})\left(\hat{h}(\bxi^i) - \E[\hat{h}(\bxi)|\bxi\in B_N,\bgamma=\bgamma^i]\right) > \frac{\epsilon_N}{6} \bigg| I_N=1\right) \\
&= \E\left[\P^N\left( \sum_{i=1}^N w_N^i(\bar{\bgamma})\left(\hat{h}(\bxi^i) - \E[\hat{h}(\bxi)|\bxi\in B_N,\bgamma=\bgamma^i]\right) > \frac{\epsilon_N}{6} \bigg| I_N=1,\bgamma^1,\ldots,\bgamma^N\right)\bigg| I_N=1\right] \\
&\le \E\left[\exp\left(-\frac{\epsilon_N^2}{72\sum_{i=1}^N (w_N^i(\bar{\bgamma}))^2 b_N^2}\right)\bigg| I_N=1\right] \\
&= \E\left[\exp\left(-\frac{\epsilon_N^2}{72\sum_{i=1}^N (w_N^i(\bar{\bgamma}))^2 b_N^2}\right)I_N\right]\left(\frac{1}{\P^N(I_N=1)}\right)\\
&\le 2\E\left[\exp\left(-\frac{\epsilon_N^2}{72\sum_{i=1}^N (w_N^i(\bar{\bgamma}))^2 b_N^2}\right)\right]\\
&\le 2\exp\left(-\frac{k_2N^\eta\epsilon_N^2}{72b_N^2}\right),
\end{align*}
for $N$ sufficiently large that $\P(I_N=1) \ge 1/2$ and $\epsilon^2_N/72b^2_N < 1$. Note that \eqref{line:mgf} was used for the final inequality.
Combining these results, we have
\begin{align*}
&\P^N\left(\mathsf{d}_1(\hat{\P}^N_{\bar{\bgamma}},\hat{\Q}^N_{\bar{\bgamma}|B_N}) > \epsilon_N/3 \bigg| I_N=1\right) \le 2\exp\left(24^{d_\xi}\left(\frac{b_N}{\epsilon_N}\right)^{d_\xi}\log \frac{96b_N}{\epsilon_N} - \frac{k_2\epsilon_N^2 N^\eta}{72 Nb_N^2}\right),
\end{align*}
for $N$ sufficiently large. For some constants $c_1,c_2>0$, and sufficiently large $N$, this is upper bounded by
\begin{equation*}
2\exp\left(-c_1N^{\eta-2(p+q)} + c_2 N^{d_{\xi}(q+p)}\log N\right).
\end{equation*}
Since $0 < d_\xi(p+q) < \eta-2(p+q)$, we can conduct a limit comparison test with $1/N^2$ to see that this term has a finite sum over $N$, which completes the proof.
\halmos \end{proof}

\subsection{Proof of main result} \label{sec:main_result}
Theorem~\ref{thm:conditionalconcentration} provides the key ingredient for the proof of the main consistency result.  
We state {\color{black}our final two assumptions on the dynamic optimization problem to establish our main result. 
\begin{assumption}[Regularity of robust problem]\label{as:decisionclass_sro}
For all $\bar{\bgamma} \in\Gamma$, there exists $M \ge 0$ such that the objective value of \eqref{eq:sro} would not change if we restricted its optimization to the decision rules $\bpi \in \Pi$ which satisfy
\begin{align*}
\left| c^\bpi(\bzeta_1,\ldots,\bzeta_T) \right| \le M\left(1 + \max \left \{ \left\| \bzeta \right \|, \sup_{\bzeta' \in \cup_{i=1}^N \mathcal{U}^i_N} \| \bzeta' \| \right \} \right), \quad \forall \bzeta \in \Xi. 
\end{align*}
\end{assumption}
\begin{assumption}[Regularity of stochastic problem]\label{as:decisionclass_sp}
For all $\bar{\bgamma} \in\Gamma$,  there exists $M' \ge 0$ such that the objective value of \eqref{eq:main} would not change if we restricted the optimization to the decision rules $\bpi \in \Pi$ for which $c^\bpi(\bzeta_1,\ldots,\bzeta_T)$ is upper semicontinuous and $\left| c^\bpi(\bzeta_1,\ldots,\bzeta_T) \right| \le M' (1 + \| \bzeta \|)$ for all $\bzeta \in \Xi$. 
\end{assumption}
Assumption~\ref{as:decisionclass_sro} is a minor modification to \citet[Assumption 3]{bertsimas2018multistage}, and can be verified by decision makers through performing a static analysis; see \citet[Appendix A]{bertsimas2018multistage}. Assumption~\ref{as:decisionclass_sp} is a condition on structure of optimal decision rules of the stochastic problem, which is nearly identical to the assumptions of  \citet[Theorem 3.6(i)]{esfahani2015data} which are used to establish asymptotic optimality for distributionally robust optimization with the type-1 Wasserstein ambiguity set. 

 }
Under these assumptions, the proof of Theorem~\ref{thm:convergence} follows from Theorem~\ref{thm:conditionalconcentration} via arguments similar to those used by \citet{esfahani2015data} and \citet{bertsimas2018multistage}.  We state the proof fully in Appendix~\ref{appx:proof_main_theorem}.

\section{Implications for Single-Period Distributionally Robust Optimization} \label{sec:wdro}
Beyond its utility in the context of multi-period problems, the measure concentration result of the previous section (Theorem~\ref{thm:conditionalconcentration}) has potentially valuable implications for distributionally robust optimization with the type-1 Wasserstein ambiguity set. Indeed, consider a  single-period optimization problem of the form
\begin{align}
\begin{aligned}
&\underset{\bx \in \mathcal{X} \subseteq \R^{d_x}}{\text{minimize}}&& \Exp_\Prb \left[ c(\bx,\bxi) \right],\label{prob:single_main}
\end{aligned}
\end{align}
where $\bxi \in \Xi \subseteq \R^{d_\xi}$ is a random vector with a probability distribution $\Prb$. When the distribution is unknown and observable only through limited historical data $(\bxi^1,\ldots,\bxi^N) \sim \Prb^N$, there has been recent interest in approximating the above problems by distributionally robust optimization with the type-1 Wasserstein ambiguity set:
\begin{align} 
\begin{aligned}
&\underset{\bx \in \mathcal{X} \subseteq \R^{d_x}}{\text{minimize}}&& \sup_{\mathbb{Q} \in \mathcal{P}(\Xi): \;  \mathsf{d}_1(\mathbb{Q},\hat{\Prb}^N) \le \epsilon_N }\Exp_\Q \left[ c(\bx,\bxi) \right].  \label{prob:wdro}
\end{aligned}
\end{align}

Due to several attractive  properties, \eqref{prob:wdro} and its relatives have received considerable recent interest in a variety of single-period operational and statistical applications. Indeed, when the robustness parameter is chosen appropriately and other mild assumptions hold, \eqref{prob:wdro} is guaranteed to be asymptotically optimal \citep[Theorem 3.6]{esfahani2015data} and the worst-case cost can often be reformulated as a tractable optimization problem \citep{esfahani2015data,blanchet2019quantifying,gao2016distributionally}. Moreover, there is growing empirical evidence that \eqref{prob:wdro} with a positive choice of the robustness parameter ($\epsilon_N > 0$) can find solutions with significantly better average out-of-sample cost compared to those obtained by the sample average approximation ($\epsilon_N = 0$), particularly when the number of data points is small; see, for example, \citet[Section 7.2]{esfahani2015data} and \citet[Section 4.2]{hanasusanto2016conic}. Theoretical results which aim to explain this improved average out-of-sample cost, both for \eqref{prob:wdro} as well as related robust approaches, are found in \cite{gotoh2018robust} and \cite{anderson2019improving}. 

In the remainder of this section, using the results from Section~\ref{sec:concentration}, we now show how side information and machine learning can be easily incorporated into any problem of the form \eqref{prob:wdro}, without foregoing its asymptotic optimality or computational tractability. Indeed,  consider a  single-period optimization problem of the form
\begin{align}
\begin{aligned}
v^*(\bar{\bgamma}) \triangleq \quad &\underset{\bx \in \mathcal{X} \subseteq \R^{d_x}}{\text{minimize}}&& \Exp_\Prb \left[ c(\bx,\bxi) \mid \bgamma = \bar{\bgamma} \right],\label{prob:single_main_side}
\end{aligned}
\end{align}
where $\bxi \in \Xi \subseteq \R^{d_\xi}$ and $\bgamma \in \Gamma \subseteq \R^{d_\gamma}$ are  random vectors with a joint probability distribution $\Prb$. Assume that the distribution is unknown and observable only through limited historical data $((\bgamma^1,\bxi^1),\ldots,(\bgamma^N,\bxi^N)) \sim \Prb^N$. We address these problems by a modification of distributionally robust optimization with the type-1 Wasserstein ambiguity set, wherein the empirical probability distribution $\hat{\Prb}^N$ is replaced with an empirical conditional probability distribution $\hat{\Prb}^N_{\bar{\bgamma}}$ (see  Section~\ref{sec:concentration}):
\begin{align} 
\begin{aligned}
v^N(\bar{\bgamma}) \triangleq \quad &\underset{\bx \in \mathcal{X} \subseteq \R^{d_x}}{\text{minimize}}&& \sup_{\mathbb{Q} \in \mathcal{P}(\Xi): \;  \mathsf{d}_1(\mathbb{Q},\hat{\Prb}^N_{\bar{\bgamma}}) \le \epsilon_N }\Exp_\Q \left[ c(\bx,\bxi) \right].  \label{prob:wdro_side}
\end{aligned}
\end{align}
As discussed previously, the empirical conditional probability distribution can be constructed using a variety of machine learning methods, such as $k$-nearest neighbor regression or kernel regression. 

For this modification, we obtain the following asymptotic optimality guarantee which is analogous to \eqref{prob:wdro} developed by  \cite{esfahani2015data}.
\begin{theorem} \label{thm:esfahani}
Suppose the weight function and uncertainty sets satisfy Assumption~\ref{ass:params} and the joint probability distribution of $(\bgamma,\bxi)$ satisfies Assumptions \ref{as:subgaussian}-\ref{as:aux}. Assume that $\hat{\bx}_N$ represents an optimizer of \eqref{prob:wdro_side}. Then the following hold for every $\bar{\bgamma}\in\Gamma$:
\begin{enumerate}[(i)]
\item If $c(\bx,\bxi)$ is upper semicontinuous in $\bxi$ and there exists $L \ge 0$ with $| c(\bx,\bxi)| \le L(1+\| \bxi\|)$ for all $\bx \in \mathcal{X}$ and $\bxi \in \Xi$, then $\Prb^\infty$-almost surely we have $\hat{v}^N(\bar{\bgamma}) \downarrow v^*(\bar{\bgamma})$ as $N \to \infty$. 
\item If the assumptions of assertion (i) hold, $\mathcal{X}$ is closed, and $c(\bx,\bxi)$ is lower semicontinuous in $\bx$ for every $\bxi \in \Xi$, then any accumulation point of $\{\hat{\bx}_N\}_{N \in \N}$  is $\Prb^\infty$-almost surely an optimal solution for \eqref{prob:single_main_side}. 
\end{enumerate}
\end{theorem}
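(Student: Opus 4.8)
The plan is to adapt the proof of \citet[Theorem 3.6]{esfahani2015data} to the conditional setting, replacing their empirical distribution $\hat{\Prb}^N$ by the empirical conditional distribution $\hat{\Prb}^N_{\bar{\bgamma}}$ and substituting their measure concentration bound with Theorem~\ref{thm:conditionalconcentration}. That theorem ensures that, $\Prb^\infty$-almost surely, the event $\{\mathsf{d}_1(\Prb_{\bar{\bgamma}},\hat{\Prb}^N_{\bar{\bgamma}}) > \epsilon_N\}$ occurs only finitely often; hence eventually $\mathsf{d}_1(\Prb_{\bar{\bgamma}},\hat{\Prb}^N_{\bar{\bgamma}}) \le \epsilon_N$, and since $\epsilon_N = k_1/N^p \to 0$ we also have $\mathsf{d}_1(\Prb_{\bar{\bgamma}},\hat{\Prb}^N_{\bar{\bgamma}}) \to 0$. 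Equivalently, $\Prb_{\bar{\bgamma}}$ eventually belongs to the Wasserstein ball defining \eqref{prob:wdro_side}. I would fix such a sample path for the remainder of the argument.

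For assertion (i), I would establish the two inequalities $\liminf_N v^N(\bar{\bgamma}) \ge v^*(\bar{\bgamma})$ and $\limsup_N v^N(\bar{\bgamma}) \le v^*(\bar{\bgamma})$. The lower bound is immediate: once $\Prb_{\bar{\bgamma}}$ lies in the ambiguity set, the inner supremum dominates $\Exp_{\Prb_{\bar{\bgamma}}}[c(\bx,\bxi)]$ for every $\bx\in\mathcal{X}$, and taking the minimum over $\bx$ gives $v^N(\bar{\bgamma}) \ge v^*(\bar{\bgamma})$ for all large $N$ (this inequality also accounts for the convergence being from above, the $\downarrow$). For the upper bound I would fix, for arbitrary $\delta > 0$, a $\delta$-optimal feasible solution $\bx_\delta \in \mathcal{X}$ for \eqref{prob:single_main_side}, and for each $N$ select a measure $\mathbb{Q}_N$ that nearly attains the inner supremum at $\bx_\delta$. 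The triangle inequality then yields $\mathsf{d}_1(\mathbb{Q}_N,\Prb_{\bar{\bgamma}}) \le \epsilon_N + \mathsf{d}_1(\hat{\Prb}^N_{\bar{\bgamma}},\Prb_{\bar{\bgamma}}) \to 0$, so $\mathbb{Q}_N \to \Prb_{\bar{\bgamma}}$ in the $1$-Wasserstein metric.

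The crux of the argument, and the step I expect to be the main obstacle, is to conclude from $\mathbb{Q}_N \to \Prb_{\bar{\bgamma}}$ that $\limsup_N \Exp_{\mathbb{Q}_N}[c(\bx_\delta,\bxi)] \le \Exp_{\Prb_{\bar{\bgamma}}}[c(\bx_\delta,\bxi)]$ under only upper semicontinuity and linear growth of $c(\bx_\delta,\cdot)$, rather than Lipschitz continuity. The portmanteau theorem gives $\limsup_N \Exp_{\mathbb{Q}_N}[f] \le \Exp_{\Prb_{\bar{\bgamma}}}[f]$ for $f$ bounded upper semicontinuous; the difficulty is upgrading this to the unbounded, linearly growing integrand. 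Here Wasserstein-$1$ convergence is essential, since it supplies not only weak convergence but also convergence of first absolute moments $\Exp_{\mathbb{Q}_N}[\|\bxi\|] \to \Exp_{\Prb_{\bar{\bgamma}}}[\|\bxi\|]$, which delivers uniform integrability of $\{\|\bxi\|\}$ under $\{\mathbb{Q}_N\}$; a truncation at radius $R$ then controls the tail contribution $\Exp_{\mathbb{Q}_N}[L(1+\|\bxi\|)\I\{\|\bxi\|>R\}]$ uniformly in $N$ (using the growth bound $c(\bx_\delta,\bxi) \le L(1+\|\bxi\|)$), reducing matters to the bounded case. Granting this, $\limsup_N v^N(\bar{\bgamma}) \le \limsup_N \Exp_{\mathbb{Q}_N}[c(\bx_\delta,\bxi)] \le \Exp_{\Prb_{\bar{\bgamma}}}[c(\bx_\delta,\bxi)] \le v^*(\bar{\bgamma}) + \delta$, and letting $\delta \downarrow 0$ completes assertion (i).

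For assertion (ii), I would take an accumulation point $\hat{\bx}$ of $\{\hat{\bx}_N\}$ along a subsequence $\hat{\bx}_{N_k} \to \hat{\bx}$; closedness of $\mathcal{X}$ gives $\hat{\bx}\in\mathcal{X}$. Since $\Prb_{\bar{\bgamma}}$ eventually lies in each ambiguity set, $v^{N_k}(\bar{\bgamma}) \ge \Exp_{\Prb_{\bar{\bgamma}}}[c(\hat{\bx}_{N_k},\bxi)]$ for large $k$. Lower semicontinuity of $c(\cdot,\bxi)$ gives $\liminf_k c(\hat{\bx}_{N_k},\bxi) \ge c(\hat{\bx},\bxi)$ pointwise, and since $c(\hat{\bx}_{N_k},\bxi) + L(1+\|\bxi\|) \ge 0$ with $\Exp_{\Prb_{\bar{\bgamma}}}[\|\bxi\|] < \infty$ (finite first moment from Assumption~\ref{as:subgaussian}), Fatou's lemma yields $\liminf_k \Exp_{\Prb_{\bar{\bgamma}}}[c(\hat{\bx}_{N_k},\bxi)] \ge \Exp_{\Prb_{\bar{\bgamma}}}[c(\hat{\bx},\bxi)]$. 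Combining with assertion (i), $v^*(\bar{\bgamma}) = \lim_k v^{N_k}(\bar{\bgamma}) \ge \Exp_{\Prb_{\bar{\bgamma}}}[c(\hat{\bx},\bxi)] \ge v^*(\bar{\bgamma})$, where the last inequality is feasibility of $\hat{\bx}$; hence $\hat{\bx}$ is optimal for \eqref{prob:single_main_side}.
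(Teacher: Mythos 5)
Your proposal is correct, and its skeleton is the one the paper intends: the paper's proof of this theorem is a one-line deferral to \citet[Theorem 3.6]{esfahani2015data}, with the concentration inequality of \citet{fournier2015rate} replaced by Theorem~\ref{thm:conditionalconcentration}; your lower bound, your choice of near-optimizing measures $\Q_N$ with $\mathsf{d}_1(\Q_N,\Prb_{\bar{\bgamma}})\to 0$ by the triangle inequality, and your Fatou-plus-lower-semicontinuity argument for assertion (ii) all follow that template. Where you genuinely deviate is the crux step of assertion (i): to pass from $\mathsf{d}_1(\Q_N,\Prb_{\bar{\bgamma}})\to 0$ to $\limsup_N \Exp_{\Q_N}[c(\bx_\delta,\bxi)] \le \Exp_{\Prb_{\bar{\bgamma}}}[c(\bx_\delta,\bxi)]$, the Esfahani--Kuhn argument (which the paper itself reproduces in Appendix~\ref{appx:proof_main_theorem} when proving Theorem~\ref{thm:convergence}) invokes their Lemma A.1 to produce a non-increasing sequence of $L_j$-Lipschitz majorants $f^j \downarrow c(\bx_\delta,\cdot)$, bounds $\Exp_{\Q_N}[f^j] - \Exp_{\Prb_{\bar{\bgamma}}}[f^j]$ by $L_j\, \mathsf{d}_1(\Q_N,\Prb_{\bar{\bgamma}})$ via Kantorovich--Rubinstein duality, and concludes by monotone convergence in $j$; you instead use the metric characterization of type-1 Wasserstein convergence (weak convergence plus convergence of first moments), extract uniform integrability of $\|\bxi\|$ under $\{\Q_N\}$, and reduce to the portmanteau inequality for upper semicontinuous integrands. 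Both routes are sound and need the same hypotheses; yours avoids the approximation lemma and the duality step entirely, relying only on standard facts about Wasserstein convergence, while the paper's route yields the explicit quantitative modulus $L_j\mathsf{d}_1$ that it reuses verbatim in the multi-period proof. One detail to repair when writing yours out: truncating the domain, i.e., working with $c(\bx_\delta,\bxi)\I\{\|\bxi\|\le R\}$, can destroy upper semicontinuity when $c$ takes negative values (and the sphere $\{\|\bxi\|=R\}$ may carry $\Prb_{\bar{\bgamma}}$-mass); truncate the value instead, writing $c \le \min(c,M)+\left(L(1+\|\bxi\|)-M\right)^+$, apply portmanteau to the u.s.c., bounded-above function $\min(c,M)$, and absorb the remainder term uniformly in $N$ by uniform integrability --- a cosmetic fix, not a gap.
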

\begin{proof}{Proof.}
The proof follows from identical reasoning as \citet[Theorem 3.6]{esfahani2015data}, in which the measure concentration result of  \citet[Theorem 2]{fournier2015rate} is replaced by Theorem~\ref{thm:conditionalconcentration} of the present paper. \halmos
\end{proof}

From the perspective of computational tractability, it is readily observed that \eqref{prob:wdro_side} retains an identical computational tractability as \eqref{prob:wdro}, except where terms of the form $\frac{1}{N}$ are replaced with $w^i_N(\bar{\bgamma})$; see, for example, \citet[Theorem 4.2]{esfahani2015data}. 
As a result of Theorem \ref{thm:esfahani}, we conclude that side information can be tractably incorporated into the variety of operational applications that utilize (single-period) Wasserstein-based distributionally robust optimization. 


\section{Tractable Approximations}\label{sec:approx}
In the previous sections, we presented the new framework of sample robust optimization with side information and established its asymptotic optimality in the context of \eqref{eq:main} without any significant structural restrictions on the space of decision rules. In this section, we focus on tractable methods for approximately solving the robust optimization problems that result from this proposed framework. Specifically, we develop a formulation which uses auxiliary decision rules to approximate the cost function.   
In combination with linear decision rules, this approach enables us to find high-quality decisions for real-world problems with more than ten stages in less than one minute, as we demonstrate in Section~\ref{sec:experiments}. 

We focus in this section on dynamic optimization problems with cost functions of the form
\begin{equation}\label{eq:costfunction}
\begin{aligned}
&c\left(\bxi_1,\ldots,\bxi_T,\bx_1,\ldots,\bx_T\right) \\
&\quad= \sum_{t=1}^T \left( {\bf f}_t^\intercal \bx_t + \bg_t^\intercal \bxi_t +   \min_{\by_t \in \R^{d_y^t}} \left \{\bh^\intercal_t \by_t: \;  \sum_{s=1}^t \bba_{t,s} \bx_s + \sum_{s=1}^t \bbb_{t,s} \bxi_s + \bbc_t \by_t \le \bd_t  \right \}\right).
\end{aligned}
\end{equation}
Such cost functions appear frequently in applications such as inventory management and supply chain networks. 
Unfortunately, it is well known that these cost functions are convex in the uncertainty $\bxi_1,\ldots,\bxi_T$. Thus, even {evaluating} the worst-case cost over a convex uncertainty set is computationally demanding in general, as it requires the maximization of a convex function. 

As an intermediary step towards developing an approximation scheme for \eqref{eq:sro} with the above cost function, we consider the following optimization problem:
\begin{equation} \label{prob:multi_policy}
\begin{aligned}
\tilde{v}^N(\bar{\bgamma}) \triangleq \quad&\underset{\bpi \in \Pi, \; \by_t^i \in \mathcal{R}_t\; \forall i,t}{\textnormal{minimize}} && \sum_{i=1}^N w^i_N(\bar{\bgamma}) \sup_{\bzeta \in \mathcal{U}^i_N} \sum_{t=1}^T \left({\bf f}_t^\intercal \bpi_t(\bzeta_1,\ldots,\bzeta_{t-1}) + \bg_t^\intercal \bzeta_t + \bh_t^\intercal \by_t^i(\bzeta_1,\ldots,\bzeta_{t}) \right)\\
&\textnormal{subject to}&&  \sum_{s=1}^t \bba_{t,s} \bpi_s(\bzeta_1,\ldots,\bzeta_{s-1}) + \sum_{s=1}^t \bbb_{t,s} \bzeta_s + \bbc_t \by_t^i(\bzeta_1,\ldots,\bzeta_t) \le \bd_t  \\
&&&\quad \forall \bzeta \in \mathcal{U}_N^i, \;i \in \{1,\ldots,N\}, \; t \in \{1,\ldots,T\},
\end{aligned}
\end{equation}
where $\mathcal{R}_t$ is the set of all functions $\by: \Xi_1 \times \cdots \times \Xi_t \to \R^{d_y^t}$.  In this problem, we have introduced auxiliary decision rules which capture the  minimization portion of  \eqref{eq:costfunction} in each stage. 
We refer to \eqref{prob:multi_policy} as a \emph{multi-policy} approach, as it involves different auxiliary decision rules for each uncertainty set. 
The following theorem shows that \eqref{prob:multi_policy} is equivalent to \eqref{eq:sro}.
\begin{theorem} \label{thm:multi_policy}
For cost functions of the form \eqref{eq:costfunction}, $\tilde{v}^N(\bar{\bgamma}) = \hat{v}^N(\bar{\bgamma})$. 
\end{theorem}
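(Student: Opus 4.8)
The plan is to reduce the equality to a \emph{per-scenario} identity and then reassemble, exploiting the fact that the auxiliary decision rules $\by_t^i$ range over $\mathcal{R}_t$, the space of \emph{all} functions. Fix a decision rule $\bpi \in \Pi$ and an index $i \in \{1,\ldots,N\}$. I would first show that the inner worst-case cost appearing in \eqref{eq:sro} equals the corresponding worst-case cost of \eqref{prob:multi_policy} once the auxiliary rules for scenario $i$ are optimized, i.e.
\begin{equation*}
\sup_{\bzeta \in \mathcal{U}^i_N} c^\bpi(\bzeta_1,\ldots,\bzeta_T) = \inf_{\by_1^i,\ldots,\by_T^i} \; \sup_{\bzeta \in \mathcal{U}^i_N} \sum_{t=1}^T \left( {\bf f}_t^\intercal \bpi_t + \bg_t^\intercal \bzeta_t + \bh_t^\intercal \by_t^i(\bzeta_1,\ldots,\bzeta_t) \right),
\end{equation*}
where the infimum is over $\by_t^i \in \mathcal{R}_t$ subject to the stage-$t$ constraints of \eqref{prob:multi_policy} holding for all $\bzeta \in \mathcal{U}^i_N$. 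The key structural observation is that, for fixed $\bpi$, the minimization over $\by_t$ embedded in \eqref{eq:costfunction} is separable across stages and depends on the realization $\bzeta$ only through $(\bzeta_1,\ldots,\bzeta_t)$ (since $\bx_s = \bpi_s(\bzeta_1,\ldots,\bzeta_{s-1})$). Hence a pointwise-optimal choice of $\by_t$ is automatically a non-anticipative function lying in $\mathcal{R}_t$, which is exactly the freedom afforded by the auxiliary rules.

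For the ``$\le$'' direction of the identity, I would take any feasible $\by_t^i \in \mathcal{R}_t$: at every $\bzeta \in \mathcal{U}^i_N$ the vector $\by_t^i(\bzeta_1,\ldots,\bzeta_t)$ is feasible for the stage-$t$ inner LP, so $\bh_t^\intercal \by_t^i(\bzeta) \ge \min\{\bh_t^\intercal \by_t : \ldots\}$; summing over $t$ and adding the common affine terms produces a pointwise upper bound on $c^\bpi(\bzeta)$, and taking the supremum over $\bzeta$ and then the infimum over the auxiliary rules yields the inequality. For the ``$\ge$'' direction, I would construct a specific feasible rule by selecting, for each $\bzeta$, an optimizer $\by_t^i(\bzeta)$ of the stage-$t$ inner LP (finite and attained, since the LP has a finite optimum under the standing assumption that the cost in \eqref{eq:costfunction} is real-valued). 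This choice is feasible for all $\bzeta$ and makes the summand equal to $c^\bpi(\bzeta)$ pointwise, so its worst-case value equals $\sup_\bzeta c^\bpi(\bzeta)$ and therefore dominates the infimum.

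Having the per-scenario identity, I would assemble the global result in two steps. Because the auxiliary rules $\{\by_t^i\}$ for distinct $i$ are decoupled in both the objective and the constraints of \eqref{prob:multi_policy}, and the weights satisfy $w^i_N(\bar{\bgamma}) \ge 0$ (as holds for all weight functions of Section~\ref{sec:sro_with_covariates}), the weighted sum of per-scenario infima equals the infimum of the weighted sum:
\begin{equation*}
\sum_{i=1}^N w^i_N(\bar{\bgamma}) \inf_{\by_1^i,\ldots,\by_T^i} \left( \cdots \right) = \inf_{\{\by_t^i\}_{i,t}} \; \sum_{i=1}^N w^i_N(\bar{\bgamma}) \left( \cdots \right).
\end{equation*}
Substituting the per-scenario identity into the objective of \eqref{eq:sro} and then merging the minimization over $\bpi$ with the joint minimization over the auxiliary rules reproduces exactly \eqref{prob:multi_policy}, giving $\hat{v}^N(\bar{\bgamma}) = \tilde{v}^N(\bar{\bgamma})$.

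The step I expect to be the crux is the ``$\ge$'' direction of the per-scenario identity, which is precisely where the supremum over $\bzeta$ and the minimization over $\by_t$ are interchanged. This interchange is valid only because $\mathcal{R}_t$ is unrestricted, so the auxiliary rule can be chosen pointwise as an inner optimizer; under any parametric restriction (\eg linear decision rules) the identity would degrade to an inequality and \eqref{prob:multi_policy} would become a conservative approximation rather than an exact reformulation. The remaining care is bookkeeping around feasibility and finiteness of the inner LPs, which I would dispatch by invoking the standing real-valuedness of \eqref{eq:costfunction} (so that infeasibility, which would render both sides $+\infty$, does not arise).
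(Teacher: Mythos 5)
Your proposal is correct and is essentially the paper's own argument: one direction follows from feasibility of the auxiliary rules (so the multi-policy objective pointwise dominates the true cost), and the other from selecting, for each scenario and each $\bzeta$, a pointwise optimizer of the stage-$t$ inner LP as the auxiliary rule, which is admissible precisely because $\mathcal{R}_t$ is unrestricted. Your repackaging as a per-scenario identity followed by interchanging the weighted sum with the infimum is only a cosmetic reorganization of the same two inequalities, and your closing remark about why the argument breaks under parametric restrictions on $\by_t^i$ matches the paper's discussion of the approximation being an upper bound in that case.
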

\begin{proof}{Proof.}
See Appendix~\ref{appx:approx}. \Halmos
\end{proof}
We observe that \eqref{prob:multi_policy} involves optimizing over decision rules, and thus is computationally challenging to solve in general. Nonetheless, we can obtain a tractable approximation of \eqref{prob:multi_policy} by further restricting the space of primary and auxiliary decision rules. 
For instance, we can restrict all primary and auxiliary decision rules as linear decision rules of the form
\begin{align*} 
\bpi_t \left( \bzeta_1,\ldots,\bzeta_{t-1} \right) &= \bx_{t,0} + \sum_{s=1}^{t-1} \bbx_{t,s} \bzeta_s,& \by_t^i(\bzeta_1,\ldots,\bzeta_t)  &=\by_{t,0}^i + \sum_{s=1}^{t} \bby_{t,s}^i \bzeta_s.
\end{align*} 
One can alternatively elect to use a richer class of decision rules, such as lifted linear decision rules \citep{Chen2009,Georghiou2015}. In all cases, feasible approximations that restrict the space of decision rules of \eqref{prob:multi_policy} provide an upper bound on the cost $\hat{v}^N(\bar{\bgamma})$ and produce decision rules that are feasible for \eqref{prob:multi_policy}.

The key benefit of the multi-policy approximation scheme is that it offers many degrees of freedom in approximating the nonlinear cost function. Specifically, in \eqref{prob:multi_policy}, a separate auxiliary decision rule $\by_t^i$ captures the value of the cost function for each uncertainty set in each stage. We approximate each $\by_t^i$ with a linear decision rule, which only needs to be locally accurate, \ie accurate for realizations in the corresponding uncertainty set. As a result, \eqref{prob:multi_policy} with linear decision rules results in significantly tighter approximations of \eqref{eq:sro} compared to using a single linear decision rule, $\by_t$, for all uncertainty sets in each stage.  Moreover, these additional degrees of freedom come with only a mild increase in computation cost, and we substantiate these claims via computational experiments in Section~\ref{sec:dynamicprocurement}. In Appendix~\ref{appx:reformulation}, we provide the reformulation of the multi-policy approximation scheme with linear decision rules into a deterministic optimization problem using standard techniques from robust optimization.

\section{Computational Experiments} \label{sec:experiments}
We perform computational experiments to assess the out-of-sample performance and computational tractability of the proposed methodologies across several applications. These examples are   dynamic inventory management (Section~\ref{sec:dynamicprocurement}), portfolio optimization (Section~\ref{sec:portfolio}), and shipment planning (Section~\ref{sec:shipmentplanning}). 

\begin{table}[t]
\TABLE{Relationship of four methods. \label{tbl:comparison}}
{\centering
\begin{tabular}{lcc}
\hline\\[-2ex]
$\epsilon_N$&$w^i_N(\bar{\bgamma}) = \frac{1}{N}$ for all $i$& $w^i_N(\bar{\bgamma})$ from machine learning\\[0.5ex]
  \hline \\[-2ex]
  $=0$ & Sample average approximation & \citet{bertsimas2014predictive} \\[0.5ex]
  $>0$ & \cite{bertsimas2018multistage} & This paper \\[0.5ex]
  \hline
\end{tabular}}
{}
\end{table}

We compare several methods using different machine learning models. These methods include the proposed {sample robust optimization with side information}, sample average approximation (SAA),  the predictions to prescriptions (PtP) approach of \citet{bertsimas2014predictive}, and sample robust optimization without side information (SRO). 
In Table~\ref{tbl:comparison}, we show that each of the above methods are particular instances of  \eqref{eq:sro} from Section~\ref{sec:sro}. The methods in the left column ignore side information by assigning equal weights to each uncertainty set, and the methods in the right column 
incorporate side information by choosing the weights based on predictive machine learning. The methods in the top row 
do not incorporate any robustness ($\epsilon_N = 0$), and the methods in the bottom row 
incorporate robustness via a positive choice of the robustness parameter ($\epsilon_N>0$) in the uncertainty sets. In addition, for the dynamic inventory management example, we also implement and compare to the residual tree algorithm described in \cite{ban2018dynamic}. In each experiment, the relevant methods are applied to the same training datasets, and their solutions are evaluated against a common testing dataset. Further details are provided in each of the following sections. 



 \subsection{Dynamic inventory management}\label{sec:dynamicprocurement}
We first consider a dynamic inventory control problem over the first $T=12$ weeks of a new product. In each week, a retailer observes demand for the product and can replenish inventory via procurement orders to different suppliers with lead times. Our problem setting closely follows \cite{ban2018dynamic}, motivated by the fashion industry in which retailers have access to auxiliary side information on the new product (color, brand) which are predictive of how demand unfolds over time.  

\paragraph{Problem Description.} In each stage $t \in \{1,\ldots,T\}$, the retailer procures inventory from multiple suppliers to satisfy demand for a single product.  The demands for the product across stages are denoted by $\xi_1,\ldots,\xi_T \ge 0$.  In each stage $t$, and before the demand $\xi_t$ is observed, the retailer places procurement orders at various  suppliers indexed by $\mathcal{J} = \{1,\ldots, | \mathcal{J}| \}$. Each supplier $j \in \mathcal{J}$ has per-unit order cost of $c_{tj} \ge 0$ and a lead time of $\ell_j$ stages. At the end of each stage, the firm incurs a per-unit holding cost of $h_t$ and a backorder cost of $b_t$. Inventory is fully backlogged and the firm starts with zero initial inventory. The cost incurred by the firm over the time horizon is captured by
 \begin{align*}
  \begin{aligned}
c(\xi_1,\ldots,\xi_T,\bx_1,\ldots,\bx_T) = \sum_{t=1}^T \sum_{j \in \mathcal{J}} c_{tj} x_{tj} +\sum_{t=1}^T
\quad& \underset{y_t \in \R}{\textnormal{minimize}} &&y_t \\
&\textnormal{subject to} &&y_t \ge h_t \left(\sum_{j \in \mathcal{J}} \sum_{s=1}^{t-\ell_j}x_{sj} - \sum_{s=1}^t \xi_s\right)\\
&&&y_t \ge -b_t \left(\sum_{j \in \mathcal{J}} \sum_{s=1}^{t-\ell_j}x_{sj} - \sum_{s=1}^t \xi_s \right).
 \end{aligned}  
 \end{align*}
\paragraph{Experiments.}  
The parameters of the procurement problem were chosen based on  \cite{ban2018dynamic}. Specifically, we consider the case of two suppliers where $c_{t1} = 1.0$, $c_{t2} = 0.5$, $h_t = 0.25$, and $b_t = 11$ for each stage. The first supplier has no lead time and the second supplier has a lead time of one stage. We generate training and test data from the same distribution as a shipment planning problem of \citet[Section EC.6]{bertsimas2014predictive}, with the exception that we generate the side information as i.i.d. samples as opposed to an ARMA process (but with the same marginal distribution).   In this case, the demands produced by this data generating process are interpreted as the demands over the $T=12$ stages. We perform computational experiments comparing the proposed sample robust optimization with side information and the residual tree algorithm proposed by \citet{ban2018dynamic}. In particular, we compare sample robust optimization with side information \emph{with} the multi-policy approximation as well as \emph{without} the multi-policy approximation (in which we use a single auxiliary linear decision rule for $y_t$ for all uncertainty sets in each stage).  The uncertainty sets from Section~\ref{sec:sro} are defined  with the $\ell_2$ norm and $\Xi = \R^{12}_+$. 
 The out-of-sample cost resulting from the decision rules were averaged over $100$ training sets of size $N = 40$ and $100$ testing points, and sample robust optimization with side information used $k$-nearest neighbors with varying choices of $k$ and radius $\epsilon \ge 0$ of the uncertainty sets. 
 \begin{table}[t]
\TABLE{Average out-of-sample cost for dynamic inventory problem. \label{tbl:dynamic_procurement}}
{\centering
\begin{tabular}{lccccccccc}
\hline
&& \multicolumn{8}{c}{$\epsilon$}\\
  \cline{3-10}	 \\[-2.5ex]
Method&$k$& 0 & 100 & 200 & 300 & 400 & 500 & 600 & 700 \\ 
  \hline
Sample robust optimization&\\
\quad \quad Linear decision rules \\
\quad \quad \quad \quad no side information & & 9669 & 8783 & 8590 & 8789 & 9150 & 9604 & 10102 & 10614 \\ 
\quad \quad \quad \quad k-nearest neighbors & 26 & 9600 & 8566 & 8411 & 8642 & 9030 & 9494 & 10001 & 10528 \\ 
 & 20 & 9640 & 8544 & 8375 & 8603 & 8996 & 9464 & 9974 & 10505 \\ 
&13 & 9862 & 8561 & 8365 & 8573 & 8960 & 9433 & 9943 & 10473 \\ 
\quad \quad Linear decision rules with multi-policy \\
\quad \quad\quad \quad  no side information&  & 8967 & 7759 & 7360 & 7320 & 7460 & 7716 & 8038 & 8412 \\ 
\quad \quad\quad \quad  k-nearest neighbors& 26 & 11346 & 8728 & 7651 & 7269 & 7241 & 7381 & 7636 & 7966 \\ 
 & 20 & 13012 & 9460 & 7925 & 7328 & \textbf{7195} & 7289 & 7519 & 7835 \\ 
&13 & 16288 & 10975 & 8576 & 7585 & 7243 & 7236 & 7412 & 7697 \\ 
    \hline
\end{tabular}}
{Average out-of-sample cost for the dynamic inventory problem using sample robust optimization with $N = 40$. For each uncertainty set radius $\epsilon$ and parameter $k$,  average was taken over 100 training sets and 100 test points. Optimal is indicated in bold.  
The residual tree algorithm with a binning of $B=2$ in each stage gave an average out-of-sample cost of $27142$.}
\end{table}
\begin{table}[t]
\TABLE{Statistical significance for dynamic inventory problem. \label{tbl:dynamic_procurementpvalue}}
{\centering
\begin{tabular}{lccccccccc}
\hline
&& \multicolumn{8}{c}{$\epsilon$}\\
  \cline{3-10}	 \\[-2.5ex]
Method&$k$& 0 & 100 & 200 & 300 & 400 & 500 & 600 & 700 \\ 
  \hline
Sample robust optimization&\\
\quad  Linear decision rules \\
\quad   \quad no side information & & *& * & *& * & * & * & * & *  \\ 
\quad   \quad k-nearest neighbors & 26 & *& * & *& * & * & * & * & *  \\ 
 & 20 & *& * & *& * & * & * & * & *  \\ 
 &13& *& * & *& * & * & * & * & *  \\ 
\quad  Linear decision rules with multi-policy \quad \quad \quad \\
\quad \quad   no side information&  &* & * &* & * &* & * & * & *  \\ 
\quad \quad   k-nearest neighbors& 26 & *& * & *& * & $ * $ & * & * & *  \\ 
 &20 & *&* & * & * &  -  & * & * & * \\ 
&13 & * & * & * & * & $5.8 \times 10^{-3}$ & $1 \times 10^{-3}$ & * & * \\ 
    \hline
\end{tabular}}
{The $p$-values of the Wilcoxon signed rank test for comparison with sample robust optimization using linear decision rules with multi-policy, $k=20$, and $\epsilon = 400$. An asterisk denotes that the $p$-value was less than $10^{-8}$. After adjusting for multiple hypothesis testing, each result is significant at the $\alpha = 0.05$ significance level if its $p$-value is less than $\frac{0.05}{63} \approx 7.9 \times 10^{-4}$.   
}
\end{table}
\begin{table}[t]
\TABLE{Average computation time (seconds) for dynamic inventory problem. \label{tbl:dynamic_procurement_times}}
{\centering
\begin{tabular}{lccccccccc}
\hline
&& \multicolumn{8}{c}{$\epsilon$}\\
  \cline{3-10}	 \\[-2.5ex]
Method&$k$& 0 & 100 & 200 & 300 & 400 & 500 & 600 & 700 \\ 
  \hline
Sample robust optimization&\\
\quad  Linear decision rules \\
\quad   \quad no side information &  & 3.86 & 25.04 & 24.75 & 25.82 & 28.70 & 35.37 & 31.13 & 31.95 \\
\quad   \quad k-nearest neighbors & 26 & 4.02 & 25.43 & 23.39 & 25.15 & 27.88 & 33.42 & 30.87 & 31.60 \\ 
 & 20 & 3.99 & 25.98 & 23.56 & 24.93 & 27.41 & 32.67 & 30.69 & 31.50 \\ 
 &13 & 4.19 & 26.53 & 24.89 & 24.99 & 26.79 & 31.64 & 30.23 & 31.32 \\ 
\quad  Linear decision rules with multi-policy\\
\quad \quad   no side information&  & 0.16 & 28.31 & 30.01 & 29.05 & 31.13 & 36.03 & 35.57 & 36.09 \\ 
\quad \quad   k-nearest neighbors& 26& 0.15 & 27.74 & 28.69 & 27.78 & 30.54 & 34.44 & 35.50 & 36.15 \\  
 &20 & 0.15 & 27.87 & 28.51 & 27.74 & 30.60 & 34.36 & 35.65 & 36.99 \\  
&13 & 0.14 & 27.78 & 28.30 & 27.27 & 30.00 & 33.67 & 35.91 & 37.76 \\ 
    \hline
\end{tabular}}
{Average computation time (seconds) for the dynamic inventory problem using sample robust optimization with $N = 40$. For each choice of uncertainty set radius $\epsilon$ and parameter $k$,  average was taken over 100 training sets. 
The residual tree algorithm of \citet{ban2018dynamic} with a binning of $B=2$ in each stage had an average computation time of $23.20$ seconds. We were unable to run this algorithm with binning of $B=3$ in each stage.
}
\end{table}

\paragraph{Results.} In Table~\ref{tbl:dynamic_procurement}, we show the average out-of-sample cost resulting from sample robust optimization with side information using linear decision rules, with and without the multi-policy approximation from Section~\ref{sec:approx}. In both settings, we used $k$-nearest neighbors as the machine learning method and evaluated the out-of-sample performance by applying the linear decision rules for the ordering quantities. 
The results of these computational experiments in Table~\ref{tbl:dynamic_procurement} demonstrate that significant improvements in average out-of-sample performance are found when combining the multi-policy approximation with side information via $k$-nearest neighbors. 
We show in Table~\ref{tbl:dynamic_procurementpvalue} that these results are statistically significant. 
For comparison, we also implemented the residual tree algorithm from \citet{ban2018dynamic}. When using their algorithm with a binning of $B=2$ in each stage, their approach resulted in an average out-of-sample cost of $27142$. We were unable to run with a binning of $B=3$ in each stage due to time limitations of $10^3$ seconds, as the size of the resulting linear optimization problem scales on the order $O(B^T)$. Such results are consistent with the estimations of computation times presented in \citep[Section 6.3]{ban2018dynamic}. The running times of the various methods are displayed in Table~\ref{tbl:dynamic_procurement_times}. 

\subsection{Portfolio optimization} \label{sec:portfolio}
The guarantees developed in this paper (Theorem~\ref{thm:convergence}) and the above numerical experiment shows that \eqref{eq:sro} is practically tractable and performs well in problems where $T \ge 1$. 
In the current and the following section, we provide numerical evidence that \eqref{eq:sro} can also outperform existing approaches on single-period problems.  

Specifically, in this section we consider a single-stage portfolio optimization problem in which we wish to find an allocation of a fixed budget to $n$ assets. Our goal is to simultaneously maximize the expected return while minimizing the the conditional value at risk (cVaR) of the portfolio. Before selecting our portfolio, we observe auxiliary side information which include general market indicators such as index performance as well as macroeconomic numbers released by the US Bureau of Labor Statistics.


\paragraph{Problem Description.} We denote the portfolio allocation among the assets by $\bx \in \mathcal{X} \triangleq \{ \bx \in \R^n_+: \sum_{j=1}^n x_j = 1 \}$, and the returns of the assets by the random variables $\bxi\in\R^{n}$.  The conditional value at risk at the $\alpha \in (0,1)$ level measures the expected loss of the portfolio, conditional on losses being above the $1-\alpha$ quantile of the loss distribution. \citet{rockafellar2000optimization} showed that the cVaR of a portfolio can be computed as the optimal objective value of a convex minimization problem. Therefore, our portfolio optimization problem can be expressed as a convex optimization problem with an auxiliary decision variable, $\beta\in\R$. Thus, given an observation $\bar{\bgamma}$ of the auxiliary side information, our goal is to solve
\begin{equation}
\begin{aligned}
&\underset{\bx \in \mathcal{X}, \;\beta \in \R}{\textnormal{minimize}}&& \Exp \left[\beta + \frac{1}{\alpha}\max(0,-\bx^\intercal \bxi-\beta) - \lambda \bx^\intercal \bxi \; \bigg| \;\bgamma = \bar{\bgamma} \right], \label{prob:portfolio}
\end{aligned}
\end{equation}
where $\lambda\in\R_+$ is a trade-off parameter that balances the risk and return objectives. 

\paragraph{Experiment.} 
Our experiments are based on a similar setting from  \citet[Section 5.2]{bertsimas2017bootstrap}. Specifically, we perform computational experiments on an instance with parameters $\alpha = 0.05$ and $\lambda = 1$, and the joint distribution of the side information and asset returns are chosen the same as \citet[Section 5.2]{bertsimas2017bootstrap}. In our experiments, we compare sample robust optimization with side information, sample average approximation, sample robust optimization, and predictions to prescriptions. For the robust approaches (bottom row of Table~\ref{tbl:comparison}), we construct the uncertainty sets from Section~\ref{sec:sro} using the $\ell_1$  norm. For each training sample size, we compute the out-of-sample objective on a test set of size 1000, and we average the results over 100 instances of training data.

In order to select $\epsilon_N$ and other tuning parameters associated with the machine learning weight functions, we first split the data into a training and validation set. We then train the weight functions using the training set, compute decisions for each of the instances in the validation set, and compute the out-of-sample cost on the validation set. We repeat this for a variety of parameter values and select the combination that achieves the best cost on the validation set.
  
  Following a similar reformulation approach as  \citet{esfahani2015data}, we solve the robust approaches \emph{exactly} by observing that
\begin{align*}
\begin{aligned}
&\underset{\bx \in \mathcal{X}, \; \beta \in \R}{\textnormal{minimize}}&& \sum_{i=1}^N w^i_N(\bar{\bgamma}) \sup_{\bzeta \in \mathcal{U}^i_N} \left \{ \beta + \frac{1}{\alpha}\max \{ 0,-\bx^\intercal \bzeta-\beta \} - \lambda \bx^\intercal \bzeta  \right \} \\
=\quad &\underset{\bx \in \mathcal{X}, \; \beta \in \R}{\textnormal{minimize}}&& \sum_{i=1}^N w^i_N(\bar{\bgamma}) \sup_{\bzeta \in \mathcal{U}^i_N} \left \{ \max\left \{\beta - \lambda \bx^\intercal \bzeta ,\; \left(\frac{1}{\alpha} +\lambda \right)\bx^\intercal \bzeta\right \}  \right \} \\
=\quad &\underset{\bx \in \mathcal{X}, \; \beta \in \R}{\textnormal{minimize}}&& \sum_{i=1}^N w^i_N(\bar{\bgamma})  \max\left \{\sup_{\bzeta \in \mathcal{U}^i_N} \left \{\beta - \lambda \bx^\intercal \bzeta \right \} ,\; \sup_{\bzeta \in \mathcal{U}^i_N} \left(\frac{1}{\alpha} +\lambda \right)\bx^\intercal \bzeta  \right \},\\
=\quad &\underset{\bx \in \mathcal{X}, \; \beta \in \R, \bv \in \R^N}{\textnormal{minimize}}&& \sum_{i=1}^N w^i_N(\bar{\bgamma})  v_i \\
&\textnormal{subject to}&& v_i \ge \beta - \lambda \bx^\intercal \bzeta \\
&&& v_i \ge \left(\frac{1}{\alpha} +\lambda \right)\bx^\intercal \bzeta \\
&&& \quad \forall \bzeta \in \mathcal{U}^i_N, \; i \in \{1,\ldots,N \}.
\end{aligned}
\end{align*}
The final expression can be reformulated as a deterministic optimization problem by reformulating the robust constraints.
 
 \begin{figure}[t]
 \FIGURE{
\centering
\includegraphics[width=0.5\textwidth]{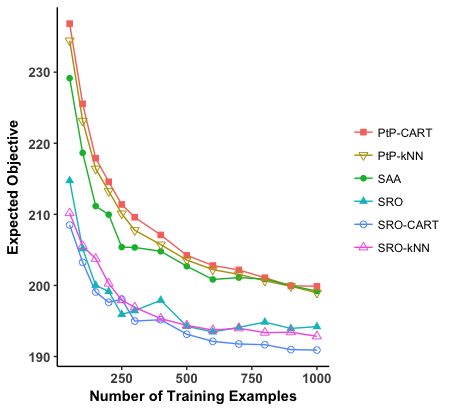}}
{Out-of-sample objective for the portfolio optimization example.\label{fig:portfolio}}
{}
\end{figure}

\paragraph{Results.} In Figure \ref{fig:portfolio}, we show the average out-of-sample objective values using the various methods. 
Consistent with the computational results of \citet{esfahani2015data} and \cite{bertsimas2017bootstrap}, the results underscore the importance of robustness in preventing overfitting and achieving good out-of-sample performance in the small data regime. Indeed, we observe that the sample average approximation, which ignores the auxiliary data, outperforms PtP-$k$NN and PtP-CART when the amount of training data is limited. We believe this is due to the fact the latter methods both throw out training examples, so the methods overfit when the training data is limited, leading to poor out-of-sample performance. 
In contrast, our methods (SRO-$k$NN and SRO-CART) typically achieve the strongest out-of-sample performance, even though the amount of training data is limited.

\subsection{Shipment planning}\label{sec:shipmentplanning}
We finally consider a shipment planning problem in which a decision maker seeks to satisfy demand in several locations from several production facilities while minimizing production and transportation costs. 
 Our problem setting closely follows \cite{bertsimas2014predictive}, in which the decision maker has access to auxiliary side information (promotions, social media, market trends), which may be predictive of future sales in each retail location.

\paragraph{Problem Description.} The decision maker first decides the quantity of inventory $x_f \ge 0$ to produce in each of the production facilities $f \in \mathcal{F} \triangleq \{1,\ldots,| \mathcal{F}| \}$,  at a cost of $p_1$ per unit. The demands $\xi_\ell \ge 0$ in each location $\ell \in \mathcal{L} \triangleq \{1,\ldots, | \mathcal{L} | \}$ are then observed. The decision maker fulfills these demands by shipping $s_{f\ell} \ge 0$ units from facility $f \in \F$ to location $\ell \in \mathcal{L}$ at a per-unit cost of $c_{f\ell} > 0$. Additionally, after observing demand, the decision maker has the opportunity to produce additional units $y_f \ge 0$ in each facility at a cost of $p_2 > p_1$ per unit.  The fulfillment of each unit of demand generates $r > 0$ in revenue. 
Given the above notation and dynamics, the cost incurred by the decision maker is
\begin{align*}
\begin{aligned}
c(\bxi,\bx) = \sum_{f \in \mathcal{F}} p_1 x_f  - \sum_{\ell \in \mathcal{L}} r \xi_\ell \;\;+ \;\;
&\underset{\substack{\bs \in \R^{ \mathcal{L} \times  \mathcal{F}}_+,\;\by \in \R^{ \mathcal{F}}_+}}{\textnormal{minimize}} && \sum_{f \in \F} p_2 y_f + \sum_{f \in \F} \sum_{\ell \in \mathcal{L}} c_{f\ell} s_{f \ell} \\
&\text{subject to} && \sum_{f \in \F} s_{f\ell} \ge \xi_\ell& \forall \ell \in \mathcal{L} \\
&&& \sum_{\ell \in \mathcal{L}} s_{f \ell} \le x_f + y_f& \forall f \in \F.
\end{aligned}
\end{align*}

\paragraph{Experiments.} We perform computational experiments using the same parameters and data generation procedure as \citet{bertsimas2014predictive}. Specifically, we consider an instance with $| \F |=4$, $| \mathcal{L}|=12$, $p_1 = 5$, $p_2 = 100$, and $r = 90$. The network topology,  transportation costs, and the joint distribution of the side information $\bgamma \in \R^3$ and demands $\bxi \in \R^{12}$ are the same as \citet{bertsimas2014predictive}, with the exception that we generate the side information as i.i.d. samples as opposed to an ARMA process (but with the same marginal distribution). 

In our experiments, we compare sample robust optimization with side information, sample average approximation, sample robust optimization, and predictions to prescriptions. For the robust approaches (bottom row of Table~\ref{tbl:comparison}), we construct the uncertainty sets from Section~\ref{sec:sro} using the $\ell_1$ norm and $\Xi = \R^{12}_+$, solve these problems using the multi-policy approximation with linear decision rules described in Section~\ref{sec:approx}, and consider uncertainty sets with radius $\epsilon \in \{100,500\}$. For the approaches using side information (right column of Table~\ref{tbl:comparison}), we used the $k_N$-nearest neighbors with parameter $k_N = \frac{2N}{5}$. 
All solutions were evaluated on a test set of size 100 and the results were averaged over 100 independent training sets. 

 \begin{figure}[t]
 \FIGURE{
\centering
\includegraphics[width=0.5\textwidth]{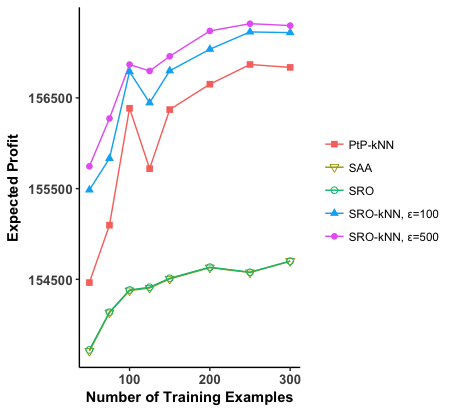}}
{Out-of-sample profit for the shipment planning example.\label{fig:shipment}}
{The profits for SRO and SAA are overlapping. }
\end{figure}
\paragraph{Results.} In Figure \ref{fig:shipment}, we present the average out-of-sample profits of the various methods. 
The results show that the best out-of-sample average profit is attained when using the proposed sample robust optimization with side information. Interestingly, we observe no discernible differences between sample average approximation and sample robust optimization in Figure~\ref{fig:shipment}, suggesting the value gained by incorporating side information in this example. 
Compared to the approach of \cite{bertsimas2014predictive}, sample robust optimization with side information achieves a better out-of-sample average performance for each choice of $\epsilon$. Table~\ref{tbl:shipment_planningpvalue} shows that these differences are statistically significant. This example demonstrates that, in addition to enjoying asymptotic optimality guarantees, sample robust optimization with side information provides meaningful value across various values of $N$.

\begin{table}[t]
\TABLE{Statistical significance for shipment planning problem. \label{tbl:shipment_planningpvalue}}
{\centering
\begin{tabular}{lC{3cm} C{3cm}}
\hline
& \multicolumn{2}{c}{$\epsilon$}\\
  \cline{2-3}\\[-2.5ex]
 $N$& 100 & 500 \\ 
  \hline
50 & $4.6 \times 10^{-13}$ & $5.3\times10^{-16}$\\
75& $1.3\times 10^{-14}$ & $6.4\times10^{-12}$\\
100 & $1.2\times10^{-13}$ & $1.1\times10^{-7}$\\
125 & $2.6\times10^{-15}$ &$1.5\times10^{-11}$\\
150 & $3.4\times10^{-12}$& $1.2\times10^{-6}$\\
 200 & $1.4\times10^{-12}$& $1.0\times10^{-8}$\\
 250 & $3.4\times10^{-10}$& $1.0\times10^{-4}$\\
 300 & $1.8\times10^{-6}$ & $5.2\times10^{-4}$\\
    \hline
\end{tabular}}
{The $p$-values from the Wilcoxon signed rank test for comparison with the predictive to prescriptive analytics method (PtP-$k$NN) and sample robust optimization with side information (SRO-$k$NN).  After adjusting for multiple hypothesis testing, all results are significant at the $\alpha = 0.05$ significance level because all $p$-values are less than $\frac{0.05}{16} \approx 3.1 \times 10^{-3}$.   
}
\end{table}

\section{Conclusion} \label{sec:conclusion} 
In this paper, we introduced \emph{sample robust optimization with side information}, a new approach for solving dynamic optimization problems with side information. Through three computational examples, we demonstrated that our method achieves significantly better out-of-sample performance than scenario-based alternatives. We complemented these empirical observations with theoretical analysis, showing our nonparametric method is asymptotically optimal via a new concentration measure result for local learning methods. Finally, we showed our approach inherits the tractability of robust optimization, scaling to problems with many stages via the multi-policy approximation scheme.

\section*{Acknowledgements}The authors thank the associate editor and two referees for many helpful suggestions that greatly improved the manuscript.

\clearpage
\theendnotes


\bibliographystyle{informs2014} 
\bibliography{DataDriven_MultiStage} 

\begin{thebibliography}{36}
\providecommand{\natexlab}[1]{#1}
\providecommand{\url}[1]{\texttt{#1}}
\providecommand{\urlprefix}{URL }

\bibitem[{Anderson \protect\BIBand{} Philpott(2019)}]{anderson2019improving}
Anderson E, Philpott A (2019) Improving sample average approximation using
  distributional robustness
  \urlprefix\url{http://www.optimization-online.org/DB_FILE/2019/10/7405.pdf}.

\bibitem[{Ban et~al.(2019)Ban, Gallien, \protect\BIBand{}
  Mersereau}]{ban2018dynamic}
Ban GY, Gallien J, Mersereau AJ (2019) Dynamic procurement of new products with
  covariate information: The residual tree method. \emph{Manufacturing \&
  Service Operations Management} 21(4):798--815.

\bibitem[{Ban \protect\BIBand{} Rudin(2018)}]{ban2018big}
Ban GY, Rudin C (2018) The big data newsvendor: practical insights from machine
  learning. \emph{Operations Research} 67(1):90--108.

\bibitem[{Bertsimas \protect\BIBand{} Kallus(2020)}]{bertsimas2014predictive}
Bertsimas D, Kallus N (2020) From predictive to prescriptive analytics.
  \emph{Management Science} 66(3):1025--1044.

\bibitem[{Bertsimas \protect\BIBand{}
  McCord(2019)}]{bertsimasmccord2018multistage}
Bertsimas D, McCord C (2019) From predictions to prescriptions in multistage
  optimization problems. \emph{arXiv preprint arXiv:1904.11637} .

\bibitem[{Bertsimas et~al.(2018{\natexlab{a}})Bertsimas, Shtern,
  \protect\BIBand{} Sturt}]{bertsimas2018multistage}
Bertsimas D, Shtern S, Sturt B (2018{\natexlab{a}}) A data-driven approach to
  multi-stage stochastic linear optimization
  \urlprefix\url{http://www.optimization-online.org/DB_FILE/2018/11/6907.pdf}.

\bibitem[{Bertsimas et~al.(2019)Bertsimas, Shtern, \protect\BIBand{}
  Sturt}]{bertsimas2019twostage}
Bertsimas D, Shtern S, Sturt B (2019) Two-stage sample robust optimization.
  \emph{arXiv preprint arXiv:1907.07142} .

\bibitem[{Bertsimas et~al.(2018{\natexlab{b}})Bertsimas, Sim, \protect\BIBand{}
  Zhang}]{bertsimas2018adaptive}
Bertsimas D, Sim M, Zhang M (2018{\natexlab{b}}) Adaptive distributionally
  robust optimization. \emph{Management Science} 65(2):604--618.

\bibitem[{Bertsimas \protect\BIBand{} Van~Parys(2017)}]{bertsimas2017bootstrap}
Bertsimas D, Van~Parys B (2017) Bootstrap robust prescriptive analytics.
  \emph{arXiv preprint arXiv:1711.09974} .

\bibitem[{Biau \protect\BIBand{} Devroye(2015)}]{biau2015lectures}
Biau G, Devroye L (2015) \emph{Lectures on the Nearest Neighbor Nethod}
  (Springer).

\bibitem[{Blanchet \protect\BIBand{} Murthy(2019)}]{blanchet2019quantifying}
Blanchet J, Murthy K (2019) Quantifying distributional model risk via optimal
  transport. \emph{Mathematics of Operations Research} 44(2):565--600.

\bibitem[{Breiman(2001)}]{breiman2001random}
Breiman L (2001) Random forests. \emph{Machine Learning} 45(1):5--32.

\bibitem[{Breiman et~al.(1984)Breiman, Friedman, Olshen, \protect\BIBand{}
  Stone}]{breiman1993cart}
Breiman L, Friedman JH, Olshen RA, Stone CJ (1984) \emph{Classification and
  Regression Trees} (Chapman \& Hall/CRC).

\bibitem[{Chen \protect\BIBand{} Zhang(2009)}]{Chen2009}
Chen X, Zhang Y (2009) Uncertain linear programs: extended affinely adjustable
  robust counterparts. \emph{Operations Research} 57(6):1469--1482.

\bibitem[{Chen et~al.(2020)Chen, Sim, \protect\BIBand{} Xiong}]{chen2020robust}
Chen Z, Sim M, Xiong P (2020) Robust stochastic optimization made easy with
  rsome. \emph{Forthcoming in Management Science} .

\bibitem[{Clement \protect\BIBand{} Desch(2008)}]{clement2008elementary}
Clement P, Desch W (2008) An elementary proof of the triangle inequality for
  the wasserstein metric. \emph{Proceedings of the American Mathematical
  Society} 136(1):333--339.

\bibitem[{Delage \protect\BIBand{} Ye(2010)}]{delage2010distributionally}
Delage E, Ye Y (2010) Distributionally robust optimization under moment
  uncertainty with application to data-driven problems. \emph{Operations
  Research} 58(3):595--612.

\bibitem[{Elmachtoub \protect\BIBand{} Grigas(2017)}]{elmachtoub2017smart}
Elmachtoub AN, Grigas P (2017) Smart ``predict, then optimize". \emph{arXiv
  preprint arXiv:1710.08005} .

\bibitem[{Fournier \protect\BIBand{} Guillin(2015)}]{fournier2015rate}
Fournier N, Guillin A (2015) On the rate of convergence in wasserstein distance
  of the empirical measure. \emph{Probability Theory and Related Fields}
  162(3-4):707--738.

\bibitem[{Friedman et~al.(2001)Friedman, Hastie, \protect\BIBand{}
  Tibshirani}]{friedman2001elements}
Friedman J, Hastie T, Tibshirani R (2001) \emph{The Elements of Statistical
  Learning}, volume~1 (Springer).

\bibitem[{Gao \protect\BIBand{} Kleywegt(2016)}]{gao2016distributionally}
Gao R, Kleywegt AJ (2016) Distributionally robust stochastic optimization with
  wasserstein distance. \emph{arXiv preprint arXiv:1604.02199} .

\bibitem[{Georghiou et~al.(2015)Georghiou, Wiesemann, \protect\BIBand{}
  Kuhn}]{Georghiou2015}
Georghiou A, Wiesemann W, Kuhn D (2015) {Generalized decision rule
  approximations for stochastic programming via liftings}. \emph{Mathematical
  Programming} 152(1-2):301--338.

\bibitem[{Gotoh et~al.(2018)Gotoh, Kim, \protect\BIBand{}
  Lim}]{gotoh2018robust}
Gotoh Jy, Kim MJ, Lim AE (2018) Robust empirical optimization is almost the
  same as mean--variance optimization. \emph{Operations Research Letters}
  46(4):448--452.

\bibitem[{Hanasusanto \protect\BIBand{} Kuhn(2018)}]{hanasusanto2016conic}
Hanasusanto GA, Kuhn D (2018) Conic programming reformulations of two-stage
  distributionally robust linear programs over wasserstein balls.
  \emph{Operations Research} 66(3):849--869.

\bibitem[{Hanasusanto et~al.(2016)Hanasusanto, Kuhn, \protect\BIBand{}
  Wiesemann}]{hanasusanto2016k}
Hanasusanto GA, Kuhn D, Wiesemann W (2016) K-adaptability in two-stage
  distributionally robust binary programming. \emph{Operations Research
  Letters} 44(1):6--11.

\bibitem[{Hannah et~al.(2010)Hannah, Powell, \protect\BIBand{}
  Blei}]{hannah2010nonparametric}
Hannah L, Powell W, Blei DM (2010) Nonparametric density estimation for
  stochastic optimization with an observable state variable. \emph{Advances in
  Neural Information Processing Systems}, 820--828.

\bibitem[{Ho \protect\BIBand{} Hanasusanto(2019)}]{hansusanto2019kernel}
Ho CP, Hanasusanto G (2019) On data-driven prescriptive analytics with side
  information: a regularized nadaraya-watson approach
  \urlprefix\url{http://www.optimization-online.org/DB_FILE/2019/01/7043.pdf}.

\bibitem[{Kantorovich \protect\BIBand{}
  Rubinstein(1958)}]{kantorovich1958space}
Kantorovich L, Rubinstein G (1958) On a space of totally additive functions.
  \emph{Vestn Lening. Univ} 13:52--59.

\bibitem[{Kullback \protect\BIBand{} Leibler(1951)}]{kullback1951information}
Kullback S, Leibler RA (1951) On information and sufficiency. \emph{The Annals
  of Mathematical Statistics} 22(1):79--86.

\bibitem[{Mohajerin~Esfahani \protect\BIBand{} Kuhn(2018)}]{esfahani2015data}
Mohajerin~Esfahani P, Kuhn D (2018) Data-driven distributionally robust
  optimization using the wasserstein metric: Performance guarantees and
  tractable reformulations. \emph{Mathematical Programming} 171(1):115--166.

\bibitem[{Natarajan et~al.(2011)Natarajan, Teo, \protect\BIBand{}
  Zheng}]{natarajan2011mixed}
Natarajan K, Teo CP, Zheng Z (2011) Mixed 0-1 linear programs under objective
  uncertainty: A completely positive representation. \emph{Operations Research}
  59(3):713--728.

\bibitem[{Rockafellar \protect\BIBand{}
  Uryasev(2000)}]{rockafellar2000optimization}
Rockafellar RT, Uryasev S (2000) Optimization of conditional value-at-risk.
  \emph{Journal of Risk} 2:21--42.

\bibitem[{Van~Parys et~al.(2017)Van~Parys, Esfahani, \protect\BIBand{}
  Kuhn}]{van2017data}
Van~Parys BP, Esfahani PM, Kuhn D (2017) From data to decisions:
  distributionally robust optimization is optimal. \emph{arXiv preprint
  arXiv:1704.04118} .

\bibitem[{Vershynin(2018)}]{vershynin2018high}
Vershynin R (2018) \emph{High-Dimensional Probability: An Introduction with
  Applications in Data Science}, volume~47 (Cambridge University Press).

\bibitem[{Walk(2010)}]{walk2010strong}
Walk H (2010) Strong laws of large numbers and nonparametric estimation.
  \emph{Recent Developments in Applied Probability and Statistics}, 183--214
  (Springer).

\bibitem[{Xu et~al.(2012)Xu, Caramanis, \protect\BIBand{}
  Mannor}]{xu2012distributional}
Xu H, Caramanis C, Mannor S (2012) A distributional interpretation of robust
  optimization. \emph{Mathematics of Operations Research} 37(1):95--110.

\end{thebibliography}
\ECSwitch


\ECHead{Electronic Companion}

\section{Properties of Weight Functions} \label{appx:weights}

In this section, we show that the $k$-nearest neighbor and kernel regression weight functions satisfy several guarantees. These results are used in the proof of Theorem~\ref{thm:conditionalconcentration}, found in Section~\ref{sec:concentration}. The main result of this section is the following. For convenience, the equations below are numbered the same as in the proof of Theorem~\ref{thm:conditionalconcentration}.
\begin{theorem} \label{thm:weight_functions}
If Assumptions~\ref{ass:params} and \ref{as:aux} hold, then 
\begin{align}
&\text{$\{w^i_N(\bar{\bgamma})\}$ are not functions of $\bxi^1,\ldots,\bxi^N$}; \tag{\ref{line:honesty_assumption}}\\
&\sum_{i=1}^N w^i_N(\bar{\bgamma}) = 1 \text{ and } w^1_N(\bar{\bgamma}),\ldots,w^N_N(\bar{\bgamma}) \ge 0,&\forall N \in \N.  \tag{\ref{line:sum_to_one}}
\end{align}
Moreover, there exists constants $k_2 > 0$ and $\eta > p(2+d_\xi)$ such that 
\begin{align}
&\lim_{N \to \infty} \frac{1}{\epsilon_N}\sum_{i=1}^N w_N^i(\bar{\bgamma}) \|\bgamma{}^i-\bar{\bgamma}\| = 0, &\text{  $\P^\infty$-almost surely}; \tag{\ref{line:lipschitz_convergence}} \\
&\E_{\P^N} \left[\exp\left(\frac{-\theta}{\sum_{i=1}^N w_N^i(\bar{\bgamma})^2}\right)\right] \le \exp(-k_2\theta N^\eta), & \forall \theta \in (0,1), N \in \N \tag{\ref{line:mgf}}. 
\end{align}
\end{theorem}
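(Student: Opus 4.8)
The plan is to verify the four claims separately for the two families of weights in Assumption~\ref{ass:params}, treating the $k$-nearest-neighbor weights and the kernel weights in parallel. Properties \eqref{line:honesty_assumption} and \eqref{line:sum_to_one} are immediate from the definitions: in both cases $w_N^i(\bar{\bgamma})$ depends only on $\bar{\bgamma}$ and $\bgamma^1,\ldots,\bgamma^N$ and never on the responses $\bxi^i$; both schemes are normalized to sum to one by construction; and all three admissible kernels (Gaussian, triangular, Epanechnikov) are nonnegative, so the weights are nonnegative. Throughout I would write $S_N := \sum_{i=1}^N w_N^i(\bar{\bgamma})^2$ for the sum of squared weights, whose reciprocal is the effective sample size controlled by \eqref{line:mgf}.

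For \eqref{line:lipschitz_convergence}, the key observation is that $\sum_i w_N^i(\bar{\bgamma})\|\bgamma^i - \bar{\bgamma}\|$ is a weighted average of distances controlled by the smoothing scale of the method. For $k$-nearest neighbors this average is bounded by the radius $R_{k_N}(\bar{\bgamma})$ of the smallest ball about $\bar{\bgamma}$ containing $k_N$ samples, and I would show $R_{k_N}(\bar{\bgamma})/\epsilon_N \to 0$ almost surely by Borel--Cantelli: for fixed $\delta'>0$, the event $\{R_{k_N}(\bar{\bgamma}) > \delta'\epsilon_N\}$ says fewer than $k_N$ samples land in the ball $B(\bar{\bgamma},\delta'\epsilon_N)$, which by Assumption~\ref{as:aux} has mass at least $g(\delta'\epsilon_N)^{d_\gamma}$, so the count inside dominates a $\mathrm{Binomial}(N, g(\delta'\epsilon_N)^{d_\gamma})$ with mean of order $N^{1-pd_\gamma}$; since $p<(1-\delta)/d_\gamma$ forces $1-pd_\gamma>\delta$, this mean dominates $k_N\sim N^\delta$ and a Chernoff lower-tail bound makes the event probabilities summable. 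For the compactly supported kernels the weighted average distance is at most $h_N$ (only points within $h_N$ carry weight), and $h_N/\epsilon_N\sim N^{p-\delta}\to 0$ deterministically because $p<\delta$; one only needs some sample within $h_N$ eventually, again by Borel--Cantelli via Assumption~\ref{as:aux}. The Gaussian kernel requires splitting the sum at a threshold $a_N=h_N\sqrt{2c\log N}$, bounding the far part through the sub-Gaussian decay of $K$ together with compactness of $\Gamma$ and a high-probability lower bound on the normalizer $\sum_i K(\|\bgamma^i-\bar{\bgamma}\|/h_N)$; since $a_N/\epsilon_N\to 0$ still holds (the logarithmic factor is absorbed as $p<\delta$ strictly), the ratio vanishes.

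For \eqref{line:mgf} the two families differ in difficulty. For $k$-nearest neighbors $S_N=k_N\cdot(1/k_N)^2=1/k_N$ is deterministic, so $\E[\exp(-\theta/S_N)]=\exp(-\theta k_N)\le\exp(-k_2\theta N^\eta)$ with $\eta=\delta$ (using $k_N\ge k_3 N^\delta$ for large $N$), and $\eta=\delta>p(2+d_\xi)$ follows from $p<(2\delta-1)/(d_\xi+2)$ together with $2\delta-1<\delta$. For the kernels I would first prove the deterministic bound $1/S_N\ge c\,m_N$, where $m_N$ is the number of samples within $h_N/2$ of $\bar{\bgamma}$ and $c>0$: writing $K_i=K(\|\bgamma^i-\bar{\bgamma}\|/h_N)$, the elementary inequality $\sum_i K_i^2\le K(0)\sum_i K_i$ gives $1/S_N=(\sum_i K_i)^2/\sum_i K_i^2\ge\sum_i K_i/K(0)\ge (K(1/2)/K(0))\,m_N$. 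Since $m_N\sim\mathrm{Binomial}(N,p_N)$ with $p_N\ge g(k_4/2)^{d_\gamma}N^{-\delta d_\gamma}$, the binomial moment generating function yields $\E[\exp(-\theta/S_N)]\le\E[\exp(-\theta c\,m_N)]=(1-p_N(1-e^{-\theta c}))^N\le\exp(-Np_N(1-e^{-\theta c}))$, and bounding $1-e^{-\theta c}\ge(1-e^{-c})\theta$ for $\theta\in(0,1)$ produces $\exp(-k_2\theta N^{1-\delta d_\gamma})$. Here $\eta=1-\delta d_\gamma>p(2+d_\xi)$ is precisely the constraint $p<(1-\delta d_\gamma)/(2+d_\xi)$, and the degenerate case $m_N=0$ is harmless because $S_N\le 1$ always makes the left side at most $e^{-\theta}\le 1$.

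The routine parts are \eqref{line:honesty_assumption}, \eqref{line:sum_to_one}, and the bandwidth estimates. The main obstacle is matching the exponents to the admissible ranges of $p$ and $\delta$ in Assumption~\ref{ass:params}: these ranges encode the two competing requirements that the smoothing radius shrink slower than $\epsilon_N$ so that \eqref{line:lipschitz_convergence} holds (yielding $p<(1-\delta)/d_\gamma$ and $p<\delta$), and that the effective sample size $1/S_N$ grow like $N^\eta$ with $\eta>p(2+d_\xi)$ so that \eqref{line:mgf} holds (yielding $p<(2\delta-1)/(d_\xi+2)$ and $p<(1-\delta d_\gamma)/(2+d_\xi)$). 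The most delicate technical step is the Gaussian-kernel tail control in \eqref{line:lipschitz_convergence} together with the high-probability lower bound on the kernel normalizer, which is exactly what forces the bandwidth exponent $\delta$ to lie below $1/(2d_\gamma)$.
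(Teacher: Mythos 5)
Your proposal is correct and follows essentially the same route as the paper's proof: \eqref{line:honesty_assumption}--\eqref{line:sum_to_one} by definition; \eqref{line:lipschitz_convergence} by bounding the weighted distance by the $k_N$-th-neighbor radius (resp.\ a bandwidth-scale threshold), then combining the ball-mass lower bound of Assumption~\ref{as:aux} with binomial concentration and Borel--Cantelli; and \eqref{line:mgf} by lower-bounding $1/\sum_i w_N^i(\bar{\bgamma})^2$ via the count of samples within a radius proportional to $h_N$ (your inequality $\sum_i K_i^2 \le K(0)\sum_i K_i$ is the paper's H\"older step $\|v^N\|_2^2 \le \|v^N\|_\infty \|v^N\|_1$) and then invoking the binomial moment generating function with the same linearization of $1-e^{-x}$. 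Your minor variations --- using the exact identity $\sum_i w_N^i(\bar{\bgamma})^2 = 1/k_N$ to get the sharper exponent $\eta=\delta$ where the paper settles for $\eta=2\delta-1$, and handling the compactly supported kernels by the direct ``all weight lies within $h_N$'' observation rather than the paper's unified split at $N^{-q}$ --- are valid simplifications but do not constitute a different approach.
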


\begin{proof}{Proof.}
We observe that \eqref{line:honesty_assumption} and \eqref{line:sum_to_one} follow directly from the definitions of the weight functions. The proofs of \eqref{line:lipschitz_convergence} and  \eqref{line:mgf} are split into two parts, one for the $k$-nearest neighbor weights and one for kernel regression weights.

\subsubsection*{k-Nearest Neighbors:}

For the proof of \eqref{line:lipschitz_convergence}, we note
\begin{align*}
\sum_{i=1}^Nw_N^i(\bar{\bgamma}) \|\bgamma^i-\bar{\bgamma}\| \le \|\bgamma^{(k_N)}(\bar{\bgamma})-\bar{\bgamma}\|,
\end{align*}
where $\bgamma^{(k_N)}(\bar{\bgamma})$ denotes the $k_N$th nearest neighbor of $\bar{\bgamma}$ out of $\bgamma^1,\ldots,\bgamma^N$. Therefore, for any $\lambda > 0$,
\begin{align*}
\P^N\left(\sum_{i=1}^Nw_N^i(\bar{\bgamma})\|\bgamma^i-\bar{\bgamma}\| > \lambda\epsilon_N\right) &\le \P^N\left(\|\bgamma^{(k_N)}(\bar{\bgamma})-\bar{\bgamma}\| > \lambda\epsilon_N\right) \\
&\le \P^N\left(\left|\left\{ i : \| \bgamma^i - \bar{\bgamma} \| \le \lambda \epsilon_N \right\}\right| \le k_N-1\right).
\end{align*}
By Assumption \ref{as:aux}, this probability is upper bounded by $\P(\beta \le k_N-1)$, where $\beta \sim \text{Binom}(N,g(\lambda\epsilon_N)^{d_\gamma})$. By Hoeffding's inequality, 
\begin{align*}
\P^N\left(\sum_{i=1}^Nw_N^i(\bar{\bgamma})\|\bgamma^i-\bar{\bgamma}\| > \lambda\epsilon_N\right) &\le \exp\left(\frac{-2(Ng(\lambda k_1/N^p)^{d_\gamma}-k_N+1)^2}{N}\right),
\end{align*}
for $k_N \le Ng(\lambda k_1/N^p)^{d_\gamma} + 1$. We note that this condition on $k_N$ is satisfied for $N$ sufficiently large because $\delta+pd_\gamma < 1$ by Assumption~\ref{ass:params}. Because the right hand side in the above inequality has a finite sum over $N$, \eqref{line:lipschitz_convergence} follows by the Borel Cantelli lemma.

For the proof of \eqref{line:mgf}, it follows from Assumption~\ref{ass:params} that
\begin{equation*}
\sum_{i=1}^Nw_N^i(\bar{\bgamma})^2 \le k_3^{-2} N^{1-2\delta}
\end{equation*}
deterministically (for all sufficiently large $N$ such that $\lceil k_3 N^\delta\rceil \le N-1$) and $2\delta-1 > p(d_\xi + 2)$. Thus, \eqref{line:mgf} follows with $\eta = 2\delta-1$.

\subsubsection*{Kernel regression:} Assumption~\ref{ass:params} stipulates that the kernel function $K(\cdot)$ is Gaussian, triangular, or Epanechnikov, which are defined in Section~\ref{sec:sro}. It is easy to verify that these kernel functions satisfy the following:
\begin{enumerate}
\item $K$ is nonnegative, finite valued, and monotonically decreasing (for nonnegative inputs).
\item $u^\alpha K(u) \to 0$ as $u\to\infty$ for any $\alpha\in\R$.
\item $\exists u^* > 0$ such that $K(u^*) > 0$.
\end{enumerate}
For the proof of \eqref{line:lipschitz_convergence}, define $q > 0$ such that $p<q<\delta$. Letting $D$ be the diameter of $\Gamma$ and $g_N(\bar{\bgamma}) = \sum_{i=1}^NK(\|\bgamma^i-\bar{\bgamma}\|/h_N)$, we have
\begin{align*}
&\sum_{i=1}^N w_N^i(\bar{\bgamma}) \|\bgamma^i-\bar{\bgamma}\| \\
&= \sum_{i=1}^N w_N^i(\bar{\bgamma}) \I\{\|\bgamma^i-\bar{\bgamma}\| \le N^{-q}\}\|\bgamma^i-\bar{\bgamma}\| + \frac{1}{g_N(\bar{\bgamma})}\sum_{i=1}^N K\left(\frac{\|\bgamma^i-\bar{\bgamma}\|}{h_N}\right)\I\{\|\bgamma^i-\bar{\bgamma}\| > N^{-q}\}\|\bgamma^i-\bar{\bgamma}\|\\
&\le N^{-q} + \frac{NDK(N^{-q}/h_N)}{g_N(\bar{\bgamma})},
\end{align*}
where the inequality follows from the monotonicity of $K$. By construction, $N^{-q}/\epsilon_N\to0$, so we just need to handle the second term. We note, for any $\lambda > 0$,
\begin{align*}
&\P^N\left(\frac{NDK(N^{-q}/h_N)}{g_N(\bar{\bgamma})} > \lambda\epsilon_N\right)\le \P^N\left(\sum_{i=1}^NZ_i^N K(u^*) < \frac{NDK(N^{-q}/h_N)}{\lambda\epsilon_N}\right),
\end{align*}
where $Z_i^N = \I\{\|\bgamma^i-\bar{\bgamma}\| \le u^*h_N\}$. To achieve this inequality, we lower bounded each term in $g_N(\bar{\bgamma})$ by $K(u^*)$ or 0, because of the monotonicity of $K$. By Hoeffding's inequality,
\begin{align*}
\P^N\left(\sum_{i=1}^NZ_i^N K(u^*) < \frac{NDK(N^{-q}/h_N)}{\lambda\epsilon_N}\right) 
&\le \exp\left(-\frac{2\left(N\E Z_i^N - \frac{ND}{\lambda\epsilon_NK(u^*)}K(N^{-q}/h_N)\right)_+^2}{N}\right) \\
&\le \exp\left(-\frac{2\left(Ng(u^*h_N)^{d_\gamma} - \frac{ND}{\lambda\epsilon_NK(u^*)}K(N^{-q}/h_N)\right)_+^2}{N}\right) \\
&= \exp\left(-\left(k_5N^{1/2-\delta d_\gamma} - k_6N^{1/2+p}K(k_4N^{-q+\delta})\right)_+^2\right),
\end{align*}
for some constants $k_5,k_6>0$ that do not depend on $N$. We used Assumption \ref{as:aux} for the second inequality. Because $\delta > q$, the second kernel property implies $N^{1/2+p}K(k_4N^{-q+\delta})$ goes to 0 as $N$ goes to infinity, so that term is irrelevant. Because $1/2-\delta d_\gamma > 0$ by Assumption~\ref{ass:params}, the right hand side of the inequality has a finite sum over $N$, and thus \eqref{line:lipschitz_convergence} follows from the Borel Cantelli lemma.

For the proof of \eqref{line:mgf}, define 
\begin{equation*}
v^N = \begin{pmatrix} K(\|\bgamma^1-\bar{\bgamma}\|/h_N) \\ \vdots\\ K(\|\bgamma^N-\bar{\bgamma}\|/h_N)\end{pmatrix}.
\end{equation*}
We note that 
\begin{align*}
\sum_{i=1}^Nw_N^i(\bar{\bgamma})^2 &= \frac{\|v^N\|_2^2}{\|v^N\|_1^2} 
\le \frac{\|v^N\|_\infty}{\|v^N\|_1} 
\le \frac{K(0)}{K(u^*)\sum_{i=1}^N Z_i^N},
\end{align*}
where $Z_i^N$ is defined above. The first inequality follows from Holder's inequality, and the second inequality follows from the monotonicity of $K$. Next, we define $\bar{Z}_i^N$ to be a Bernoulli random variable with parameter $g(u^*h_N)^{d_\gamma}$ for each $i$. For any $\theta\in(0,1)$,
\begin{align*}
\E_{\P^N} \left[\exp\left(\frac{-\theta}{\sum_{i=1}^N w_N^i(\bar{\bgamma})^2}\right)\right] &\le \E_{\P^N} \left[\exp\left(\frac{-\theta K(u^*)\sum_{i=1}^N \bar{Z}_i^N}{K(0)}\right)\right] \\
&= \left(1-g(u^*h_N)^{d_\gamma} + g(u^*h_N)^{d_\gamma}\exp(-\theta K(u^*)/K(0))\right)^N \\
&\le \exp\left(-Ng(u^*h_N)^{d_\gamma}(1-\exp(-\theta K(u^*)/K(0)))\right) \\
&\le \exp\left(-Ng(u^*h_N)^{d_\gamma}\frac{\theta K(u^*)}{2K(0)}\right) \\
&= \exp\left(-\frac{\theta K(u^*)g(k_4u^*)^{d_\gamma}N^{1-\delta d_\gamma}}{2K(0)} \right).
\end{align*}
The first inequality follows because $g(u^* h_N)^{d_\gamma}$ is an upper bound on $\P(\|\bgamma^i-\bar{\bgamma}\| \le u^*h_N)$ by Assumption~\ref{as:aux}. The first equality follows from the definition of the moment generating function for a binomial random variable. The next line follows from the inequality $e^x \ge 1+x$ and the following from the inequality $1-e^{-x} \ge x/2$ for $0\le x\le 1$. Because $1-\delta d_\gamma > p(2+d_\xi)$, this completes the proof of \eqref{line:mgf} with $\eta=1-\delta d_\gamma$ and $k_2=K(u^*)g(k_4u^*)^{d_\gamma}/2K(0)$.
\halmos
\end{proof}

\section{Proof of Theorem~\ref{thm:convergence}} \label{appx:proof_main_theorem}
In this section, we present our proof of Theorem~\ref{thm:convergence}. We make use of the following result from \citet{bertsimas2018multistage} (their {\color{black}Lemma EC.2}), which bounds the difference in worst case objective values between {\color{black}distributionally robust optimizatoin with the type-1 Wasserstein ambiguity set and sample robust optimization\footnote{We make this distinction to update our notation with the latest version of the paper.}} problems. We note that \citet{bertsimas2018multistage} proved the following result for the case that $\Q'$ is the unweighted empirical measure, but their proof carries through for the case here in which $\Q'$ is a weighted empirical measure.
\begin{lemma} \label{lemma:l1_to_was}
Let $\mathcal{Z} \subseteq \R^d$, $f: \mathcal{Z} \to \R$ be measurable, and $\bzeta^1,\ldots,\bzeta^N \in \mathcal{Z}$. Suppose that $$\Q' = \sum_{i=1}^N w^i \delta_{\bzeta^i}$$ for given weights $w^1,\ldots,w^N \ge 0$ that sum to one. If $\theta_2 \ge 2\theta_1 \ge 0$, then
\begin{align*}
\sup_{\Q\in\mathcal{P}(\mathcal{Z}) : \;\mathsf{d}_1(\Q',\Q)\le \theta_1 }\Exp_{\bxi\sim\mathbb{Q}} [f(\bxi)] \le \sum_{i=1}^N w^i \sup_{\bzeta \in \mathcal{Z}: \| \bzeta - \bzeta^i \| \le \theta_2} f(\bzeta)   + {\frac{4\theta_1}{\theta_2}}\sup_{\bzeta \in \mathcal{Z}}| f(\bzeta)|.
\end{align*}
\end{lemma}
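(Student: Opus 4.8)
The plan is to reduce the claim to a statement about a single feasible distribution and then exploit the discrete structure of $\Q'$. Fix any $\Q \in \mathcal{P}(\mathcal{Z})$ with $\mathsf{d}_1(\Q',\Q) \le \theta_1$ and fix $\varepsilon > 0$. Since the type-1 Wasserstein distance is an infimum over couplings, there is a joint distribution $\Pi$ of $(\bxi,\bxi')$ with marginals $\Q$ and $\Q'$ and $\Exp_\Pi \|\bxi - \bxi'\| \le \theta_1 + \varepsilon$. Because $\Q' = \sum_{i=1}^N w^i \delta_{\bzeta^i}$ is supported on finitely many atoms, I would disintegrate $\Pi$ over the second coordinate: writing $\Q_i$ for the conditional law of $\bxi$ given $\bxi' = \bzeta^i$, one obtains $\Q = \sum_{i=1}^N w^i \Q_i$ and $\sum_{i=1}^N w^i \Exp_{\bxi \sim \Q_i}\|\bxi - \bzeta^i\| \le \theta_1 + \varepsilon$. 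This converts the abstract transport constraint into a per-atom transport budget, which is the crux of the reduction.

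The second step bounds $\Exp_{\bxi \sim \Q_i}[f(\bxi)]$ for each atom by splitting on the event $\{\|\bxi - \bzeta^i\| \le \theta_2\}$. Write $M \triangleq \sup_{\bzeta \in \mathcal{Z}}|f(\bzeta)|$ and $S_i \triangleq \sup_{\bzeta \in \mathcal{Z}: \|\bzeta - \bzeta^i\| \le \theta_2} f(\bzeta)$, and note that $\bzeta^i$ itself lies in the latter set, so $-M \le S_i \le M$. On the ``near'' event $f(\bxi) \le S_i$ holds pointwise, while on the ``far'' event $f(\bxi) \le M$; combining these with $\Q_i(\text{near}) = 1 - \Q_i(\text{far})$ gives $\Exp_{\bxi\sim\Q_i}[f(\bxi)] \le S_i + (M - S_i)\,\Q_i(\|\bxi - \bzeta^i\| > \theta_2)$. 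Since $0 \le M - S_i \le 2M$ and Markov's inequality yields $\Q_i(\|\bxi - \bzeta^i\| > \theta_2) \le \Exp_{\bxi\sim\Q_i}\|\bxi - \bzeta^i\|/\theta_2$, this becomes $\Exp_{\bxi\sim\Q_i}[f(\bxi)] \le S_i + (2M/\theta_2)\,\Exp_{\bxi\sim\Q_i}\|\bxi - \bzeta^i\|$.

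Finally, I would take the $w^i$-weighted sum over $i$. The leading terms reproduce $\sum_i w^i S_i$, which is exactly the sample robust objective on the right-hand side, and the transport terms are controlled by the budget from the first step, so the correction is at most $(2M/\theta_2)(\theta_1 + \varepsilon)$. Taking the supremum over $\Q$ in the ball and letting $\varepsilon \downarrow 0$ bounds the left-hand side by $\sum_i w^i S_i + 2M\theta_1/\theta_2$ (this route in fact yields the sharper constant $2$ in place of $4$ and does not even require the hypothesis $\theta_2 \ge 2\theta_1$; the stated inequality follows a fortiori because the correction term is nonnegative). The main obstacle is the measure-theoretic bookkeeping in the first step — justifying the disintegration of an approximately optimal coupling across the atoms of $\Q'$ and the resulting decomposition $\Q = \sum_i w^i \Q_i$ with its per-atom transport budget — together with the careful sign handling in the second step, where $f$ need not be nonnegative, so the global quantity $M = \sup|f|$ and the two-sided estimate $-M \le S_i \le M$ are what make the ``far-mass'' correction valid.
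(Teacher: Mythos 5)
Your proof is correct, and a genuine comparison with ``the paper's proof'' is not quite possible here, because the paper does not prove this lemma at all: it imports it from \citet{bertsimas2018multistage} (their Lemma EC.2), remarking only that the original argument for the unweighted empirical measure ``carries through'' when $\Q'$ is a weighted empirical measure. Your argument is a self-contained substitute that makes this assertion precise, and it does so by the natural route: take a near-optimal coupling $\Pi$, disintegrate it over the finitely many atoms of $\Q'$ to obtain $\Q = \sum_{i=1}^N w^i \Q_i$ together with the per-atom budget $\sum_{i=1}^N w^i \Exp_{\bxi\sim\Q_i}\|\bxi - \bzeta^i\| \le \theta_1 + \varepsilon$, then split each $\Exp_{\bxi\sim\Q_i}[f(\bxi)]$ on the event $\{\|\bxi - \bzeta^i\| \le \theta_2\}$ and apply Markov's inequality. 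The weights enter only through the disintegration, which is exactly why the unweighted proof generalizes. Your sign bookkeeping is also right: $f\I\{\text{near}\} \le S_i\,\I\{\text{near}\}$ and $f\I\{\text{far}\} \le M\,\I\{\text{far}\}$ hold pointwise regardless of the sign of $f$, and $0 \le M - S_i \le 2M$ because $S_i \ge f(\bzeta^i) \ge -M$. Finally, your claim that this yields the constant $2$ in place of $4$, with no need for the hypothesis $\theta_2 \ge 2\theta_1$, is correct, and the constant $2$ is in fact tight: with $N=1$, $\bzeta^1 = \bzero$, $f = -M$ on the ball of radius $\theta_2$ and $f = +M$ outside it, the distribution placing mass $\theta_1/(\theta_2+\delta)$ at distance $\theta_2 + \delta$ from the origin and the remaining mass at the origin achieves $\Exp_{\Q}[f] \to -M + 2M\theta_1/\theta_2$ as $\delta \downarrow 0$. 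Since the correction term is nonnegative, your sharper bound implies the stated one, so nothing is lost relative to how the paper uses the lemma (there, $\theta_1 = \epsilon_N/N^r$ and $\theta_2 = \epsilon_N$).

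Two caveats, neither a gap. First, the statement implicitly assumes $\theta_2 > 0$ (otherwise the right-hand side is undefined), and your Markov step uses this; likewise you may assume $M < \infty$, since for $M = \infty$ the inequality is vacuous when $\theta_1 > 0$ and immediate when $\theta_1 = 0$. Second, the ``measure-theoretic bookkeeping'' you flag is elementary precisely because $\Q'$ has finite support: for each distinct atom $z$ of $\Q'$ set $\Q_z(A) = \Pi(A \times \{z\})/\Q'(\{z\})$ and let $\Q_i = \Q_{\bzeta^i}$; indices with coinciding $\bzeta^i$ or with $w^i = 0$ then cause no difficulty, and both the identity $\Q = \sum_i w^i \Q_i$ and the budget inequality follow by summing over atoms.
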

We now restate and prove the main result, which combines the new measure concentration result from this paper with similar proof techniques as \citet{bertsimas2018multistage} and \citet{esfahani2015data}.
\begin{repeattheorem}[Theorem~\ref{thm:convergence}.]
Suppose the weight function and uncertainty sets satisfy Assumption~\ref{ass:params}, the joint probability distribution of $(\bgamma,\bxi)$ satisfies Assumptions \ref{as:subgaussian}-\ref{as:aux} from Section~\ref{sec:concentration}, and the cost function satisfies  Assumptions \ref{as:decisionclass_sro}-\ref{as:decisionclass_sp}from Section~\ref{sec:main_result}.
Then, for every $\bar{\bgamma}\in\Gamma$, $$\lim_{N \to \infty} \hat{v}^N(\bar{\bgamma}) = v^*(\bar{\bgamma}), \quad \Prb^\infty\text{-almost surely}.$$ 
\end{repeattheorem}
\begin{proof}{Proof.}
 We break the limit into upper and lower parts. The proof of the lower part follows from an argument similar to that used by \citet{bertsimas2018multistage}. The proof of the upper part follows from the argument used by \citet{esfahani2015data}.
 \subsubsection*{Lower bound.}
We first show that
\begin{align}
\liminf_{N\to\infty} \hat{v}^N(\bar{\bgamma}) 
\ge v^*(\bar{\bgamma}), \quad \Prb^\infty\text{-almost surely}. \label{line:lower_bound}
\end{align}
Indeed, it follows from Assumptions~\ref{ass:params}-\ref{as:subgaussian} and the union bound that there exists $N_0 \in \N$ such that
\begin{align*}
 \P^N\left(\sup_{\bzeta \in \cup_{i=1}^N \mathcal{U}^i_N} \| \bzeta \| > \log N  \right) < \text{exp} (-(\log N)^{1.99}), \quad \forall N \ge N_0. 
\end{align*}
Therefore, the Borel-Cantelli lemma implies that there exists $N_1 \in \N$, $\Prb^\infty$-almost surely, such that 
\begin{align}
\cup_{i=1}^N \mathcal{U}^i_N \subseteq  D_N \triangleq \{\bzeta : \|\bzeta\| \le \log N\}, \quad \forall N \ge N_1. \label{line:upper_bound_on_support}
\end{align}

Consider any $r >0$ such that $\epsilon_N N^{-r}$ satisfies Assumption~\ref{ass:params}, and let $\Pi^N$ denote the set of decision rules which satisfy the conditions of Assumption~\ref{as:decisionclass_sro}. Then, the following holds for all $N \ge N_1 \triangleq \max \{ N_0, 2^{\frac{1}{r}} \}$ and $\bpi \in \Pi^N$:
\begin{align}
&\sup_{\Q \in \mathcal{P}(D_N \cap \Xi): \; \mathsf{d}_1\left(\Q, \hat{\Prb}^N_{\bar{\bgamma}}\right) \le \frac{\epsilon_N}{N^r}}\E_{\bxi\sim\Q}[c^\bpi(\bxi_1,\ldots,\bxi_T)]\notag \\
&{\color{black}\le \sum_{i=1}^N w^N_i(\bar{\bgamma}) \sup_{\bzeta \in D_N \cap \Xi:\; \| \bzeta - \bxi^i\| \le \epsilon_N}c^\bpi(\bzeta_1,\ldots,\bzeta_T) + \frac{4}{N^r} \sup_{\bzeta\in D_N\cap\Xi} \left |c^\bpi(\bzeta_1,\ldots,\bzeta_T) \right|} \notag \\
&= \sum_{i=1}^N w^N_i(\bar{\bgamma}) \sup_{\bzeta \in \mathcal{U}^i_N}c^\bpi(\bzeta_1,\ldots,\bzeta_T) + \frac{4}{N^r} \sup_{\bzeta\in D_N\cap\Xi} \left |c^\bpi(\bzeta_1,\ldots,\bzeta_T) \right| \notag \\
&\le \sum_{i=1}^N w^N_i(\bar{\bgamma}) \sup_{\bzeta \in \mathcal{U}^i_N}c^\bpi(\bzeta_1,\ldots,\bzeta_T) + \frac{4}{N^r}M\left(1 + \max \left \{ \left\| \bzeta \right \|, \sup_{\bzeta' \in \cup_{i=1}^N \mathcal{U}^i_N} \| \bzeta' \| \right \} \right) \notag \\
&\le  \sum_{i=1}^N w^N_i(\bar{\bgamma}) \sup_{\bzeta \in \mathcal{U}^i_N}c^\bpi(\bzeta_1,\ldots,\bzeta_T) +\frac{4M}{N^r} (1+\log N). \label{line:dont_have_good_name}
\end{align}
Indeed, the first inequality follows from Lemma~\ref{lemma:l1_to_was} since $N \ge 2^{\frac{1}{r}}$, the equality follows from $N \ge N_1$, the second inequality holds because $\bpi \in \Pi^N$, and the third and final inequality follows from the definition of $D_N$ and $N \ge N_1$. We observe that the second term in \eqref{line:dont_have_good_name} converges to zero as $N \to \infty$. 

We now observe that
\begin{align*}
\E [c^\bpi(\bxi_1,\ldots,\bxi_T) \mid \bgamma = \bar{\bgamma}]  &\triangleq \E_{\bxi\sim\P_{\bar{\bgamma}}} [c^\bpi(\bxi_1,\ldots,\bxi_T)]  \\
&=\E_{\bxi\sim\P_{\bar{\bgamma}}} [c^\bpi(\bxi_1,\ldots,\bxi_T)\I\{\bxi\notin D_N\}]   +\E_{\bxi\sim\P_{\bar{\bgamma}}} [c^\bpi(\bxi_1,\ldots,\bxi_T)\I\{\bxi\notin D_N\}].
\end{align*}
We handle the first term with the Cauchy-Schwartz inequality,
\begin{equation*}
\E_{\bxi\sim\P_{\bar{\bgamma}}} [c^\bpi(\bxi_1,\ldots,\bxi_T)\I\{\bxi\notin D_N\}]\le \sqrt{\E_{\bxi\sim\P_{\bar{\bgamma}}}[c^\bpi(\bxi_1,\ldots,\bxi_T)^2]\P_{\bar{\bgamma}}(\bxi\notin D_N)}.
\end{equation*}
By Assumption~\ref{as:subgaussian}, the above bound is finite and converges to zero as $N \to \infty$ uniformly over $\bpi \in \Pi^N$. We handle the second term by the new concentration measure from this paper. Specifically, it follows from  Theorem~\ref{thm:conditionalconcentration} that there exists an $N_2 \ge N_1$, $\Prb^\infty$-almost surely, such that $$\mathsf{d}_1(\P_{\bar{\bgamma}},\hat{\P}_{\bar{\bgamma}}^N) \le \frac{\epsilon_N}{N^r}\quad \forall N \ge N_2.$$ Therefore, for all $N \ge N_2$ and decision rules $\bpi \in \Pi^N$:
\begin{align*}
&\E_{\bxi\sim\P_{\bar{\bgamma}}} [c^\bpi(\bxi_1,\ldots,\bxi_T)\I\{\bxi \in D_N\}] \\
&= \E_{\bxi\sim\P_{\bar{\bgamma}}}\left[\left(c^\bpi(\bxi_1,\ldots,\bxi_T)-\inf_{\bzeta \in D_N \cap \Xi}c^\bpi(\bzeta_1,\ldots,\bzeta_T)\right)\I\{\bxi\in D_N\}\right]  +  \underbrace{\P_{\bar{\bgamma}}(\bxi\in D_N) \inf_{\bzeta \in D_N \cap \Xi}c^\bpi(\bzeta_1,\ldots,\bzeta_T)}_{\alpha_N}\\ 
&\le \sup_{\Q \in \mathcal{P}(\Xi): \; \mathsf{d}_1\left(\Q, \hat{\Prb}^N_{\bar{\bgamma}}\right) \le \frac{\epsilon_N}{N^r}} \E_{\bxi\sim \Q}\left[\left(c^\bpi(\bxi_1,\ldots,\bxi_T)-\inf_{\bzeta \in D_N \cap \Xi}c^\bpi(\bzeta_1,\ldots,\bzeta_T)\right)\I\{\bxi\in D_N\}\right] + \alpha_N \\
&= \sup_{\Q \in \mathcal{P}(\Xi \cap D_N): \; \mathsf{d}_1\left(\Q, \hat{\Prb}^N_{\bar{\bgamma}}\right) \le \frac{\epsilon_N}{N^r}} \E_{\bxi\sim \Q}\left[c^\bpi(\bxi_1,\ldots,\bxi_T)-\inf_{\bzeta \in D_N \cap \Xi}c^\bpi(\bzeta_1,\ldots,\bzeta_T)\right] + \alpha_N \\
&= \sup_{\Q \in \mathcal{P}(\Xi \cap D_N): \; \mathsf{d}_1\left(\Q, \hat{\Prb}^N_{\bar{\bgamma}}\right) \le \frac{\epsilon_N}{N^r}} \E_{\bxi\sim\Q}[c^\bpi(\bxi_1,\ldots,\bxi_T)] - \P_{\bar{\bgamma}}(\bxi\notin D_N)  \inf_{\bzeta\in D_N \cap \Xi}  c^\bpi(\bzeta_1,\ldots,\bzeta_T),
\end{align*}
where the inequality follows from $N \ge N_2$. It follows from \eqref{line:upper_bound_on_support} that the second term in the final equality converges to zero as $N \to \infty$ uniformly over $\bpi \in \Pi^N$. 

Combining the above, we conclude that
\begin{align}
\liminf_{N\to\infty} \hat{v}^N(\bar{\bgamma}) &= \liminf_{N \to \infty} \inf_{\bpi \in \Pi}\sum_{i=1}^N w^N_i(\bar{\bgamma}) \sup_{\bzeta \in \mathcal{U}^i_N}c^\bpi(\bzeta_1,\ldots,\bzeta_T) \notag \\
&= \liminf_{N \to \infty} \inf_{\bpi \in \Pi^N}\sum_{i=1}^N w^N_i(\bar{\bgamma}) \sup_{\bzeta \in \mathcal{U}^i_N}c^\bpi(\bzeta_1,\ldots,\bzeta_T) \label{line:applying_restriction_to_pi_N}\\
&\ge \liminf_{N \to \infty} \inf_{\bpi \in \Pi^N} \E [c^\bpi(\bxi_1,\ldots,\bxi_T) \mid \bgamma = \bar{\bgamma}], \quad \Prb^\infty\text{-almost surely} \notag  \\
&\ge \inf_{\bpi \in \Pi} \E [c^\bpi(\bxi_1,\ldots,\bxi_T) \mid \bgamma = \bar{\bgamma}] \label{line:removing_restriction_to_pi_N} \\
&= v^*(\bar{\bgamma}),  \notag
\end{align}
where \eqref{line:applying_restriction_to_pi_N} follows from  Assumption~\ref{as:decisionclass_sro} and \eqref{line:removing_restriction_to_pi_N} follows because $\Pi^N \subseteq \Pi$ for all $N \in \N$. This completes the proof of \eqref{line:lower_bound}. 

 \subsubsection*{Upper bound.}
We now prove that 
\begin{align}
\limsup_{N\to\infty} \hat{v}^N(\bar{\bgamma}) 
\le v^*(\bar{\bgamma}), \quad \Prb^\infty\text{-almost surely}. \label{line:upper_bound}
\end{align}
Indeed, for any arbitrary $\delta > 0$, let $\bpi_\delta \in \Pi$ be a $\delta$-optimal solution for \eqref{eq:main}. Moreover, without any loss of generality, we assume that the decision rule is chosen to satisfy the conditions of Assumption~\ref{as:decisionclass_sp}. Then it follows from \citet[Lemma A.1]{esfahani2015data} that there exists a non-increasing sequence of  functions $f^j(\bzeta_1,\ldots,\bzeta_T)$, $j \in \N$,  such that $$\lim_{j\to\infty}f^j(\bzeta_1,\ldots,\bzeta_T) = c^{\bpi_\delta}(\bzeta_1,\ldots,\bzeta_T), \quad \forall \bzeta \in \Xi$$ and $f^j$ is $L_j$-Lipschitz continuous. Furthermore, for each $N \in \N$, choose any probability distribution $\hat{\Q}^N \in \mathcal{P}(\Xi)$ such that $\mathsf{d}_1(\hat{\Q}^N, \hat{\Prb}^N_{\bar{\bgamma}}) \le \epsilon_N$ and 
$$\sup_{\Q\in \mathcal{P}(\Xi): \; \mathsf{d}_1(\Q, \hat{\Prb}^N_{\bar{\bgamma}}) \le \epsilon_N}\E_{\bxi \sim \Q}[c^{\bpi_\delta}(\bxi_1,\ldots,\bxi_T)] \le \E_{\bxi \sim \hat{\Q}^N}[c^{\bpi_\delta}(\bxi_1,\ldots,\bxi_T)] + \delta.$$ For any $j\in\N$,
\begin{align*}
\limsup_{N\to\infty} \hat{v}^N(\bar{\bgamma}) 
&\le \limsup_{N\to\infty} \sum_{i=1}^N w^N_i(\bar{\bgamma}) \sup_{\bzeta \in \mathcal{U}^i_N}  c^{\bpi_\delta}(\bzeta_1,\ldots,\bzeta_T)  \\
&= \limsup_{N\to\infty} \sup_{\Q\in \mathcal{P}(\Xi): \; \mathsf{d}_\infty(\Q, \hat{\Prb}^N_{\bar{\bgamma}}) \le \epsilon_N}\E_{\bxi \sim \Q}[c^{\bpi_\delta}(\bxi_1,\ldots,\bxi_T)]  \\
&\le \limsup_{N\to\infty} \sup_{\Q\in \mathcal{P}(\Xi): \; \mathsf{d}_1(\Q, \hat{\Prb}^N_{\bar{\bgamma}}) \le \epsilon_N}\E_{\bxi \sim \Q}[c^{\bpi_\delta}(\bxi_1,\ldots,\bxi_T)]  \\
&\le \limsup_{N\to\infty} \E_{\bxi \sim \hat{\Q}^N}[ c^{\bpi_\delta}(\bxi_1,\ldots,\bxi_T)] +\delta\\
&\le \limsup_{N\to\infty} \E_{\bxi \sim \hat{\Q}^N}[ f^j(\bxi_1,\ldots,\bxi_T)] +\delta\\
&\le \limsup_{N\to\infty} \E_{\bxi \sim \P_{\bar{\bgamma}}}[f^j(\bxi_1,\ldots,\bxi_T)] + L_j \mathsf{d}_1(\P_{\bar{\bgamma}},\hat{\Q}^N) + \delta \\
&\le \limsup_{N\to\infty} \E_{\bxi \sim \P_{\bar{\bgamma}}}[f^j(\bxi_1,\ldots,\bxi_T)]  + L_j(\mathsf{d}_1(\P_{\bar{\bgamma}},\hat{\P}^N_{\bar{\bgamma}}) + \mathsf{d}_1(\hat{\Q}^N,\hat{\P}^N_{\bar{\bgamma}}))+\delta \\
&\le \limsup_{N\to\infty}  \E_{\bxi \sim \P_{\bar{\bgamma}}}[f^j(\bxi_1,\ldots,\bxi_T)] + L_j(\mathsf{d}_1(\P_{\bar{\bgamma}},\hat{\P}^N_{\bar{\bgamma}}) + \epsilon_N)+\delta \\
&=  \E_{\P_{\bar{\bgamma}}}[f^j(\bxi_1,\ldots,\bxi_T)] +\delta, \;\;\;\text{$\P^\infty$-almost surely},
\end{align*}
where we have used the relationship between sample robust optimization and distributionally robust optimization with the type-$\infty$ Wasserstein ambiguity set for the first equality \cite[Section 6]{bertsimas2018multistage}, the  fact $\mathsf{d}_1(\P,\Q) \le \mathsf{d}_\infty(\P,\Q)$ for the second inequality, the dual form of the 1-Wasserstein metric for the fifth inequality (because $f^j$ is $L_j$-Lipschitz), and Theorem \ref{thm:conditionalconcentration} for the equality. Taking the limit as $j\to\infty$, and applying the monotone convergence theorem (which is allowed because $\E_{\bxi \sim \P_{\bar{\bgamma}}}|f^1(\bxi_1,\ldots,\bxi_T)| \le L_1\E_{\bxi \sim \P_{\bar{\bgamma}}}\|\bxi\| + |f^1(0)| < \infty$ by Assumption \ref{as:aux}), gives
\begin{equation*}
\limsup_{N\to\infty}\hat{v}^N(\bar{\bgamma}) \le\E_{\bxi \sim \Prb_{\bar{\bgamma}}}[c^{\bpi_\delta}(\bxi_1,\ldots,\bxi_T)]  + \delta \le v^*(\bar{\bgamma}) + 2\delta, \quad \P^\infty\textnormal{-almost surely}.
\end{equation*}
Since $\delta > 0$ was chosen arbitrarily, the proof of \eqref{line:upper_bound} is complete. \halmos
\end{proof}

\section{Proof of Theorem~\ref{thm:multi_policy}} \label{appx:approx}
In this section, we present our proof of Theorem~\ref{thm:multi_policy} from Section~\ref{sec:approx}. We restate the theorem here for convenience. 
\begin{repeattheorem}[Theorem~\ref{thm:multi_policy}.]
For cost functions of the form \eqref{eq:costfunction}, $\tilde{v}^N(\bar{\bgamma}) = \hat{v}^N(\bar{\bgamma})$. 
\end{repeattheorem}
\begin{proof}{Proof.}
We first show that $\tilde{v}^N(\bar{\bgamma}) \ge \hat{v}^N(\bar{\bgamma})$. Indeed, consider any primary decision rule $\bar{\bpi}$ and auxiliary decision rules $\bar{\by}_1^i,\ldots,\bar{\by}_T^i$ for each $i \in \{1,\ldots,N \}$ which are optimal for \eqref{prob:multi_policy}.\footnote{If no optimal solution exists, then we may choose any $\eta$-optimal solution.}  Then, it follows from feasibility to  \eqref{prob:multi_policy} that
\begin{align*}
\bh^\intercal_t \bar{\by}_t^i(\bzeta_1,\ldots,\bzeta_t) \ge \min_{\by_t \in \R^{d_y^t}} \left \{\bh^\intercal_t \by_t: \;  \sum_{s=1}^t \bba_{t,s} \bar{\bpi}_s(\bzeta_1,\ldots,\bzeta_{s-1}) + \sum_{s=1}^t \bbb_{t,s} \bzeta_s + \bbc_t \by_t \le \bd_t  \right \}
\end{align*}
for each $i \in \{1,\ldots,N\}$, $\bzeta \in \mathcal{U}^i_N$, and $t \in \{1,\ldots,T\}$. Thus, 
\begin{align*}
\hat{v}^N(\bar{\bgamma}) &= \min_{\bpi \in \Pi} \sum_{i=1}^N w^i_N(\bar{\bgamma}) c^{\bpi}(\bzeta_1,\ldots,\bzeta_T) \\
&\le \sum_{i=1}^N w^i_N(\bar{\bgamma}) c^{\bar{\bpi}}(\bzeta_1,\ldots,\bzeta_T) \\
&\le \sum_{i=1}^N w^i_N(\bar{\bgamma})  \sup_{\bzeta \in \mathcal{U}^i_N} \sum_{t=1}^T \left({\bf f}_t^\intercal \bar{\bpi}_t(\bzeta_1,\ldots,\bzeta_{t-1}) + \bg_t^\intercal \bzeta_t + \bh_t^\intercal \bar{\by}_t^i(\bzeta_1,\ldots,\bzeta_{t}) \right) = \tilde{v}^N(\bar{\bgamma}).
\end{align*}
The other side of the inequality follows from similar reasoning. Indeed, let $\bar{\bpi}$ be an optimal solution to \eqref{eq:sro}. For each $i \in \{1,\ldots,N\}$ and $t \in \{1,\ldots,T\}$, define $\bar{\by}^i_t \in \mathcal{R}_t$ as any decision rule that satisfies
\begin{align*}
\bar{\by}^i_t(\bzeta_1,\ldots,\bzeta_t) \in \argmin_{\by_t \in \R^{d_y^t}} \left \{\bh^\intercal_t \by_t: \;  \sum_{s=1}^t \bba_{t,s} \bar{\bpi}_s(\bzeta_1,\ldots,\bzeta_{s-1}) + \sum_{s=1}^t \bbb_{t,s} \bzeta_s + \bbc_t \by_t \le \bd_t  \right \}
\end{align*}
for every $\bzeta \in \mathcal{U}^i_N$. Then,
\begin{align*}
\tilde{v}^N(\bar{\bgamma}) &\le \sum_{i=1}^N w^i_N(\bar{\bgamma})  \sup_{\bzeta \in \mathcal{U}^i_N} \sum_{t=1}^T \left({\bf f}_t^\intercal \bar{\bpi}_t(\bzeta_1,\ldots,\bzeta_{t-1}) + \bg_t^\intercal \bzeta_t + \bh_t^\intercal \bar{\by}_t^i(\bzeta_1,\ldots,\bzeta_{t}) \right) \\
&=  \sum_{i=1}^N w^i_N(\bar{\bgamma})  \sup_{\bzeta \in \mathcal{U}^i_N} c^{\bar{\bpi}}(\bzeta_1,\ldots,\bzeta_T) = \hat{v}^N(\bar{\bgamma}).
\end{align*}
Combining the above inequalities, the proof is complete. 
\Halmos \end{proof}

\section{Tractable Reformulation of the Multi-Policy Approximation} \label{appx:reformulation}
For completeness, we now show how to reformulate the multi-policy approximation scheme with linear decision rules from Section~\ref{sec:approx} into a deterministic optimization problem using standard techniques from robust optimization.

We begin by transforming \eqref{prob:multi_policy} with linear decision rules into a more compact representation. First, we combine the primary linear decision rules across stages as
\begin{align*}
\bx_0 &= \begin{bmatrix}
\bx_{1,0} \\
\vdots \\
\bx_{T,0}
\end{bmatrix} \in \R^{d_x},  &
\bbx &= \begin{bmatrix}
\bzero &\bzero & \bzero & \cdots & \bzero & \bzero & \bzero\\
\bbx_{2,1} & \bzero & \bzero & \cdots& \bzero & \bzero & \bzero \\
\bbx_{3,1} & \bbx_{3,2} & \bzero & \cdots& \bzero & \bzero & \bzero\\
\vdots & \vdots &\vdots &  \ddots & \vdots & \vdots & \vdots \\
\bbx_{T-2,1} & \bbx_{T-2,2} & \bbx_{T-2,3}&\cdots & \bzero& \bzero & \bzero\\
\bbx_{T-1,1} & \bbx_{T-1,2} & \bbx_{T-1,3}&\cdots & \bbx_{T-1,T-2} & \bzero & \bzero\\
\bbx_{T,1} & \bbx_{T,2} & \bbx_{T,3}&\cdots & \bbx_{T,T-2} & \bbx_{T,T-1} & \bzero\\
\end{bmatrix} \in \R^{d_x \times d_\xi}.
\end{align*}
We note that the zero entries in the above matrix are necessary to ensure that the linear decision rules are non-anticipative. Similarly, for each $i \in \{1,\ldots,N\}$, we represent the auxiliary linear decision rules as
\begin{align*}
\by_{0}^i &= \begin{bmatrix}
\by_{1,0}^i \\
 \vdots \\
\by_{T,0}^i 
\end{bmatrix} \in \R^{d_y}, &\bby^i &= \begin{bmatrix}
\bby^i_{1,1} & \bzero &  \cdots& \bzero & \bzero  \\
\bby^i_{2,1} & \bby^i_{2,2} & \cdots& \bzero & \bzero \\
\vdots & \vdots  &  \ddots & \vdots & \vdots \\
\bby^i_{T-1,1} & \bby^i_{T-1,2}&\cdots & \bby^i_{T-1,T-1} & \bzero \\
\bby^i_{T,1} & \bby^i_{T,2} &\cdots & \bby^i_{t,t-1} & \bby^i_{T,T} \\
\end{bmatrix} \in \R^{d_y \times d_\xi}.
\end{align*}
We now combine the problem parameters. Let $\bd = (\bd_1,\ldots,\bd_T) \in \R^{m}$ and
\begin{align*}
{\bf f} &= \begin{bmatrix}
{\bf f}_1 \\
\vdots \\
{\bf f}_T
\end{bmatrix} \in \R^{d_x}, &\bba &=  \begin{bmatrix}
\bba_{1,1} & \bzero &  \cdots& \bzero & \bzero  \\
\bba_{2,1} & \bba_{2,2} & \cdots& \bzero & \bzero \\
\vdots & \vdots  &  \ddots & \vdots & \vdots \\
\bba_{T-1,1} & \bba_{T-1,2}&\cdots & \bba_{T-1,T-1} & \bzero \\
\bba_{T,1} & \bba_{T,2} &\cdots & \bba_{t,t-1} & \bba_{T,T} \\
\end{bmatrix} \in \R^{m \times d_x}, \\
\bg &= \begin{bmatrix}
\bg_1 \\
\vdots \\
\bg_T
\end{bmatrix} \in \R^{d_\xi}, &\bbb &=  \begin{bmatrix}
\bbb_{1,1} & \bzero &  \cdots& \bzero & \bzero  \\
\bbb_{2,1} & \bbb_{2,2} & \cdots& \bzero & \bzero \\
\vdots & \vdots  &  \ddots & \vdots & \vdots \\
\bbb_{T-1,1} & \bbb_{T-1,2}&\cdots & \bbb_{T-1,T-1} & \bzero \\
\bbb_{T,1} & \bbb_{T,2} &\cdots & \bbb_{t,t-1} & \bbb_{T,T} \\
\end{bmatrix} \in \R^{m \times d_x},\\
\bh &= \begin{bmatrix}
\bh_1 \\
\vdots \\
\bh_T
\end{bmatrix} \in \R^{d_y}, &\bbc &=  \begin{bmatrix}
\bbc_{1,1} & \bzero &  \cdots& \bzero & \bzero  \\
\bzero & \bbc_{2,2} & \cdots& \bzero & \bzero \\
\vdots & \vdots  &  \ddots & \vdots & \vdots \\
\bzero& \bzero&\cdots & \bbc_{T-1,T-1} & \bzero \\
\bzero& \bzero&\cdots & \bzero & \bbc_{T,T} \\
\end{bmatrix} \in \R^{m \times d_x}.
\end{align*}
Therefore, using the above compact notation, we can rewrite the multi-policy approximation with linear decision rules as
\begin{equation} \label{prob:multi_policy_compact}
\begin{aligned}
&\underset{\substack{\bx_0 \in \R^{d_x}, \bbx \in \R^{d_{x} \times d_\xi}\\
 \by_0^i \in \R^{d_y},\; \bby^i  \in \R^{d_y \times d_\xi} }}{\textnormal{minimize}} && \sum_{i=1}^N w^i_N(\bar{\bgamma}) \sup_{\bzeta \in \mathcal{U}^i_N} \left \{ {\bf f}^\intercal  (\bx_0 + \bbx \bzeta) + \bg^\intercal \bzeta + \bh^\intercal \left(\by_0^i + \bby^i \bzeta \right) \right \}\\
&\textnormal{subject to}&&  \bba(\bx_0 + \bbx \bzeta)  + \bbb \bzeta + \bbc\left(\by_0^i + \bby^i \bzeta \right) \le \bd  \\
&&& \bx_0 + \bbx \bzeta \in \mathcal{X} \\
&&&\quad \forall \bzeta \in \mathcal{U}_N^i, \;i \in \{1,\ldots,N\},
\end{aligned}
\end{equation}
where  $\mathcal{X} \triangleq \mathcal{X}_1 \times \cdots \times \mathcal{X}_T$ and the matrices $\bbx$ and $\bby$ are non-anticipative.   
 Note that the linear decision rules in the above optimization problem are represented using $O(d_\xi \max \{ d_x,  N d_y \})$ decision variables, where $d_x \triangleq d_x^1 + \cdots + d_x^T$ and $d_y \triangleq d_y^1 + \cdots + d_y^T$. Thus, the complexity of representing the primary and auxiliary linear decision rules scales efficiently both in the size of the dataset and the number of stages.  
For simplicity, we present the reformulation for the case in which there are no constraints on the decision variables and nonnegativity constraints on the random variables.
\begin{theorem}\label{thm:reformulation}
Suppose $\Xi = \R^{d_\xi}_+$ and $\mathcal{X} = \R^{d_x}$. Then, \eqref{prob:multi_policy_compact} is equivalent to
\begin{equation*}
\begin{aligned}
&\underset{\substack{\bx_0 \in \R^{d_x}, \bbx \in \R^{d_{x} \times d_\xi}\\
 \by_0^i \in \R^{d_y},\; \bby^i  \in \R^{d_y \times d_\xi} \\
 \bLambda^i \in \R^{m \times d_\xi}_+, \;  \bs^i \in \R^{d_\xi}_+}}{\textnormal{minimize}}&&
 \sum_{i=1}^Nw^i_N(\bar{\bgamma}) \left( {\bf f}^\intercal \left(\bx_0 + \bbx \bxi^i \right) + \bg^\intercal \bxi^i + \bh^\intercal \left(\by_0^i + \bby^i \bxi^i \right) +  (\bs^i)^\intercal \bxi^i + \epsilon_N \left\|  \bbx^\intercal {\bf f} + \bg + (\bby^i)^\intercal \bh  + \bs^i \right\|_*  \right)\\
&\textnormal{subject to}&& \bba \left(\bx_0 + \bbx \bxi^i \right) +  \bbb \bxi^i + \bbc \left(\by_0^i + \bby^i \bxi^i \right)  + \bLambda^i \bxi{}^i + \epsilon_N \left\| \bba \bbx  + \bbb + \bbc \bby^i + \bLambda^i  \right \|_*  \le \bd\\
&&& \quad \forall i \in \{1,\ldots,N\}.
\end{aligned}
\end{equation*}
where  $\| \bbz \|_* \triangleq (
\| {\bf z}_1 \|_*, \ldots,
\|{\bf z}_r \|_*) \in \R^r$ for any matrix $\bbz \in \R^{r \times n}$.
\end{theorem}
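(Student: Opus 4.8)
The plan is to dualize every robust maximization appearing in \eqref{prob:multi_policy_compact}---both the inner supremum in the objective and the semi-infinite robust constraint---using conic duality, thereby converting each $\sup_{\bzeta \in \mathcal{U}^i_N}$ into an $\inf$ over nonnegative dual variables that can be absorbed into the outer minimization. Since $\mathcal{X} = \R^{d_x}$, the constraint $\bx_0 + \bbx\bzeta \in \mathcal{X}$ holds trivially and is discarded at the outset. The only structural feature of the uncertainty set I exploit is that, under the hypothesis $\Xi = \R^{d_\xi}_+$, the set $\mathcal{U}^i_N = \{\bzeta \in \R^{d_\xi}_+ : \|\bzeta - \bxi^i\| \le \epsilon_N\}$ is the intersection of the nonnegative orthant with a norm ball centered at the data point $\bxi^i$.

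The key computation I would isolate as a lemma is that, for any vector $\bc \in \R^{d_\xi}$,
\begin{equation*}
\sup_{\bzeta \in \mathcal{U}^i_N} \bc^\intercal \bzeta = \inf_{\bs \ge \bzero}\left\{ (\bc + \bs)^\intercal \bxi^i + \epsilon_N \|\bc + \bs\|_* \right\}.
\end{equation*}
To derive this, I would Lagrangian-dualize only the nonnegativity constraint $\bzeta \ge \bzero$ with multiplier $\bs \ge \bzero$, leaving the norm ball intact; maximizing the resulting linear function $(\bc + \bs)^\intercal \bzeta$ over $\{\bzeta : \|\bzeta - \bxi^i\| \le \epsilon_N\}$ yields $(\bc + \bs)^\intercal \bxi^i + \epsilon_N\|\bc + \bs\|_*$, since the dual norm is the support function of the unit norm ball. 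Weak duality is immediate; strong duality (equality) follows from Slater's condition, which holds because $\epsilon_N > 0$ and $\bxi^i \ge \bzero$ guarantee a strictly positive point strictly inside the ball (e.g.\ $\bxi^i + \delta\bone$ for small $\delta > 0$). Finiteness and attainment of the supremum follow since $\mathcal{U}^i_N$ is compact and nonempty.

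With this lemma, the remainder is bookkeeping. For the objective, I would pull out the $\bzeta$-independent terms ${\bf f}^\intercal \bx_0 + \bh^\intercal \by_0^i$ and apply the lemma with $\bc = \bbx^\intercal {\bf f} + \bg + (\bby^i)^\intercal \bh$, so that the per-sample inner supremum becomes
\begin{equation*}
\inf_{\bs^i \ge \bzero}\left\{ {\bf f}^\intercal(\bx_0 + \bbx\bxi^i) + \bg^\intercal\bxi^i + \bh^\intercal(\by_0^i + \bby^i\bxi^i) + (\bs^i)^\intercal\bxi^i + \epsilon_N\|\bbx^\intercal {\bf f} + \bg + (\bby^i)^\intercal\bh + \bs^i\|_* \right\},
\end{equation*}
which matches the stated objective once the infimum over $\bs^i$ merges into the outer minimization (valid because the weights $w^i_N(\bar{\bgamma})$ are nonnegative and the $\bs^i$ decouple across $i$). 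For the robust constraint, I would observe that it holds for all $\bzeta \in \mathcal{U}^i_N$ if and only if, row by row, $\sup_{\bzeta \in \mathcal{U}^i_N}[(\bba\bbx + \bbb + \bbc\bby^i)\bzeta]_k \le \bd_k - (\bba\bx_0 + \bbc\by_0^i)_k$; applying the lemma to each row with multiplier $\bLambda^i_k \ge \bzero$ and stacking the rows into $\bLambda^i \in \R^{m \times d_\xi}_+$ reproduces exactly the constraint in the theorem, with $\|\cdot\|_*$ acting row-wise as defined.

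The main obstacle---and the only place requiring genuine care---is the passage from the semi-infinite constraint to its finite reformulation with $\bLambda^i$ as decision variables. Converting the objective is clean because nested minimizations collapse, but for the constraint I must argue that ``the robust constraint holds'' is equivalent to ``there exists $\bLambda^i \ge \bzero$ satisfying the reformulated inequality,'' which requires both strong duality and attainment of the dual infimum. I would secure attainment by noting that each row's dual objective is continuous and coercive over $\bLambda^i_k \ge \bzero$: the linear term $(\cdot + \bLambda^i_k)^\intercal\bxi^i$ is bounded below since $\bLambda^i_k, \bxi^i \ge \bzero$, while the dual-norm term forces the objective to $+\infty$ as $\|\bLambda^i_k\| \to \infty$. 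Hence the infimum is achieved, the reformulated feasible region coincides with the robust one, and---since the two problems then share the same objective and feasible set---their optimal values are equal, completing the argument.
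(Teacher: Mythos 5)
Your proposal is correct and takes essentially the same route as the paper's proof: both rest on the single duality identity $\max_{\bzeta \ge \bzero}\left\{\bc^\intercal \bzeta : \|\bzeta - \bxi^i\| \le \epsilon_N\right\} = \min_{\bs \ge \bzero}\left\{(\bc+\bs)^\intercal \bxi^i + \epsilon_N \left\|\bc+\bs\right\|_*\right\}$, applied once to the objective (where the inner infimum collapses into the outer minimization) and row-by-row to the robust constraints (stacking the multipliers into $\bLambda^i$). The only difference is one of packaging: the paper cites conic strong duality directly, with dual attainment implicit in writing $\min$, whereas you derive the identity by partially dualizing the nonnegativity constraint and separately justify strong duality via Slater's condition and attainment via coercivity --- a somewhat more self-contained justification of the same step.
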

\begin{proof}{Proof. }
For any $\bc \in \R^{d_\xi}$ and $\bxi \in \Xi$, it follows directly from strong duality for conic optimization that
\begin{align*}
\max_{\bzeta \ge \bzero} \left \{ \bc^\intercal \bzeta: \; \| \bzeta - \bxi{} \| \le \epsilon \right \} = \min_{\blambda \ge \bzero} \left \{ (\bc + \blambda)^\intercal \bxi{} + \epsilon \left \| \bc + \blambda \right \|_* \right \}.
\end{align*}
We use this result to reformulate the objective and constraints of \eqref{prob:multi_policy_compact}. First, let the $j$-th rows of $\bba,\bbb,\bbc$ and the $j$-th element of $\bd$ be denoted by $\ba_j \in \R^{d_x}$, $\bb_j \in \R^{\xi}$, $\bc_j \in \R^{d_y},$ and $d_j \in \R$.  Then, each robust constraint has the form
\begin{align*}
 \ba_{j}^\intercal (\bx_0 + \bbx \bzeta) + \bb_j^\intercal \bzeta + \bc_j^\intercal (\by_0^i + \bby^i \bzeta) \le d_j \quad \forall \bzeta \in \mathcal{U}^i_N.
\end{align*}
Rearranging terms,
\begin{align*}
( \ba_{j}^\intercal  \bbx + \bb_j^\intercal + \bc_j^\intercal \bby^i)\bzeta \le d_j - \ba_j^\intercal \bx_0 - \bc_j^\intercal \by_0^i \quad \forall \bzeta \in \mathcal{U}^i_N,
\end{align*}
which applying duality becomes
\begin{align*}
\exists \blambda^{i}_j \ge \bzero: \; \left(\bbx^\intercal \ba_j  + \bb_j + (\bby^i)^\intercal \bc_j  + \blambda_{j}^i \right)^\intercal \bxi{}^i + \epsilon_N \left\| \bbx^\intercal \ba_j  + \bb_j + (\bby^i)^\intercal \bc_j  + \blambda_{j}^i  \right \|_*  \le d_j - \ba_j^\intercal \bx_0 - \bc_j^\intercal \by^i_0.
\end{align*}
Rearranging terms, the robust constraints for each $i \in \{1,\ldots,N\}$ are satisfied if and only if
\begin{align*}
\exists \bLambda^{i} \ge \bzero: \; \bba \left(\bx_0 + \bbx \bxi^i \right) +  \bbb \bxi^i + \bbc \left(\by_0^i + \bby^i \bxi^i \right)  + \bLambda^i \bxi{}^i + \epsilon_N \left\| \bba \bbx  + \bbb + \bbc \bby^i + \bLambda^i  \right \|_*  \le \bd,
\end{align*}
where the dual norm for a matrix is applied separately for each row.
Similarly, the objective function takes the form
\begin{align*}
& \sum_{i=1}^Nw^i_N(\bar{\bgamma})  \sup_{\bzeta \in \mathcal{U}^i_N} \left \{ {\bf f}^\intercal  (\bx_0 + \bbx \bzeta) + \bg^\intercal \bzeta + \bh^\intercal \left(\by_0^i + \bby^i \bzeta \right) \right \}\\
&= \sum_{i=1}^Nw^i_N(\bar{\bgamma}) \left({\bf f}^\intercal \bx_0 + \bh^\intercal \by_0^i + \sup_{\bzeta \in \mathcal{U}^i_N}  \left( {\bf f}^\intercal  \bbx + \bg^\intercal + \bh^\intercal \bby^i \right) \bzeta  \right) \\
&= \sum_{i=1}^Nw^i_N(\bar{\bgamma}) \left({\bf f}^\intercal \bx_0 + \bh^\intercal \by_0^i + \inf_{\bs^i \ge \bzero}\left \{ \left(   \bbx^\intercal {\bf f} + \bg + (\bby^i)^\intercal \bh  + \bs^i \right)^\intercal \bxi^i + \epsilon_N \left\|  \bbx^\intercal {\bf f} + \bg + (\bby^i)^\intercal \bh  + \bs^i \right\|_* \right)  \right \}\\
&= \sum_{i=1}^Nw^i_N(\bar{\bgamma}) \left( {\bf f}^\intercal \left(\bx_0 + \bbx \bxi^i \right) + \bg^\intercal \bxi^i + \bh^\intercal \left(\by_0^i + \bby^i \bxi^i \right) + \inf_{\bs^i \ge \bzero} \left \{     (\bs^i)^\intercal \bxi^i + \epsilon_N \left\|  \bbx^\intercal {\bf f} + \bg + (\bby^i)^\intercal \bh  + \bs^i \right\|_*  \right \} \right).
\end{align*}
Combining the reformulations above, we obtain the desired reformulation.
\Halmos \end{proof}

\end{document}